\newcommand{\blind}{1}
\setlist[enumerate]{itemsep=0mm}
\setlist[itemize]{itemsep=0mm}
\newtheorem{theorem}{Theorem}[section]
\newtheorem{corollary}{Corollary}[section]
\newtheorem{lemma}[theorem]{Lemma}
\newtheorem{assumption}{Assumption}
\theoremstyle{remark}
\newtheorem{remark}{Remark}
\newcommand{\bmu}{\bm{\mu}}
\newcommand{\by}{\bm{Y}}
\newcommand{\bl}{\bm{L}}
\newcommand{\bz}{\bm{z}}
\newcommand{\bI}{\bm{I}}
\newcommand{\bE}{\bm{E}}
\newcommand{\bX}{\bm{X}}
\newcommand{\bw}{\bm{w}}
\newcommand{\bp}{\bm{p}}
\newcommand{\bA}{\bm{A}}
\newcommand{\bB}{\bm{B}}
\newcommand{\bU}{\bm{U}}
\newcommand{\bV}{\bm{V}}
\newcommand{\bO}{\bm{O}}
\newcommand{\bR}{\bm{R}}
\newcommand{\bQ}{\bm{Q}}
\newcommand{\bD}{\bm{D}}
\newcommand{\bP}{\bm{P}}
\newcommand{\bSigma}{\bm{\Sigma}}
\newcommand{\bv}{\bm{v}}
\newcommand{\bZ}{\bm{Z}}
\DeclareMathOperator{\E}{\mathbb{E}}
\DeclareMathOperator{\prob}{\mathbb{P}}
\DeclareMathOperator*{\argmin}{arg\,min}
\newcommand{\yx}{\color{blue} \it}
\newcommand{\xx}{\color{black} \rm}
\begin{document}

\def\spacingset#1{\renewcommand{\baselinestretch}%
{#1}\small\normalsize} \spacingset{1}


\if1\blind
{
  \title{\bf Bayesian Sparse Gaussian Mixture Model in High Dimensions}
  \author{Dapeng Yao
  \\
    Department of Applied Mathematics and Statistics, Johns Hopkins University\\
    and \\
    Fangzheng Xie\\
    Department of Statistics, Indiana University\\
    and \\
    Yanxun Xu\\
    Department of Applied Mathematics and Statistics, Johns Hopkins University\\
    }
    \date{}
  \maketitle
} \fi

\if0\blind
{
  \bigskip
  \bigskip
  \bigskip
  \begin{center}
    {\LARGE\bf \color{orange}Bayesian Sparse Gaussian Mixture Model in High Dimensions}
\end{center}
  \medskip
} \fi

\bigskip

\begin{abstract}
 We study the sparse high-dimensional Gaussian mixture model when the number of clusters is allowed to grow with the sample size.
A minimax lower bound for parameter estimation is established, and we show that a constrained maximum likelihood estimator achieves the minimax lower bound.
However, this optimization-based estimator is computationally intractable because the objective function is highly nonconvex and the feasible set involves discrete structures. To address the computational challenge, we propose a Bayesian approach to estimate high-dimensional Gaussian mixtures whose cluster centers exhibit sparsity using a continuous spike-and-slab prior. 
Posterior inference can be efficiently computed using an easy-to-implement Gibbs sampler. We further prove that the posterior contraction rate of the proposed Bayesian method is minimax optimal.
The mis-clustering rate is obtained as a by-product using tools from matrix perturbation theory.
The proposed Bayesian sparse Gaussian mixture model does not require pre-specifying the number of clusters, which 
 can be adaptively estimated via the Gibbs sampler. 
The validity and usefulness of the proposed method is demonstrated through simulation studies and the analysis of a real-world single-cell RNA sequencing dataset.
\end{abstract}

\noindent%
{\it Keywords:} Clustering, High dimensions, Markov chain Monte Carlo, Minimax estimation, Posterior contraction. 
\vfill
\addtolength{\textheight}{1in}%
\newpage
\spacingset{1.9} 
\section{Introduction}
\label{sec:intro}


Clustering is a powerful tool for detecting structures in heterogeneous data and identifying homogeneous subgroups with a wide range of applications, such as genomics \citep{gu2008bayesian}, pattern recognition \citep{diday1981clustering},  
and topic modeling \citep{blei2003latent}.  
In many scientific domains, data are often high-dimensional, i.e., the dimension of observations can be larger than the sample size. For example, an important task in the analysis of single-cell RNA-sequencing data, where the number of genes (dimension)  is usually larger than the number of cells (sample size), is to cluster cells and identify   functional cell subpopulations \citep{cao2019single}.   
A principal challenge of extending the low-dimensional clustering techniques to high dimensions is the well-known  ``curse of dimensionality." 
To overcome this issue, dimensionality reduction \citep{ding2002adaptive} or additional structural assumptions \citep{caichime2019} are usually necessary in high dimensional clustering methods. 


High dimensional clustering and mixture models have attracted attention recently from 
the frequentist perspective. 
When the dimension $p$ has at most the same order as the sample size $n$,
\citet{doss2023optimal} studied the optimal rate of estimation in a finite Gaussian location mixture model without a separation condition.
\citet{loffler2020optimality} showed that spectral clustering is minimax optimal 
in the Gaussian mixture model with isotropic covariance matrix when $p=o(n\Delta)$, where $\Delta$ is the minimal distance among cluster centers.
When $p\gg n$, \citet{azizyan2013} considered a simple case in which there are only two clusters 
with equal mixing weights and same isotropic covariance matrices. 
\citet{jin2016} and \citet{jin2017} proposed influential features principal component analysis based on feature selection and principal component analysis. A phase transition phenomenon in high dimensional clustering problem was also investigated in \cite{jin2016} and \cite{jin2017} across different sparsity and signal levels.
\citet{caichime2019} proposed a modified Expectation-Maximization (EM) algorithm 
based on sparse discriminant vectors to obtain the minimax optimal convergence rate of the excess mis-clustering error. 
In terms of density estimation, \citet{ashtiani2020} obtained a near-optimal convergence rate for high dimensional location-scale mixtures with respect to the total variation distance. 
%
%

Despite these theoretical and computational developments in high dimensional clustering, most frequentist approaches dealing with finite mixtures assume that the number of clusters $K$ is either known or needs to be estimated consistently using techniques such as cross-validation \citep{smyth2000crossvalidation} and the gap statistics \citep{gap}. 
In contrast, Bayesian methods treat $K$ as an unknown parameter and put a prior on it. 
For example, \citet{miller2018} proposed a mixture of finite mixtures model with a Gibbs sampler, and the posterior consistency of $K$ was later studied in \citet{miller2023consistency}. 
\citet{ohn2020optimal} established a near optimal rate for estimating finite Gaussian mixtures with respect to the Wasserstein distance when $K$ 
is unknown and allowed to grow with $n$. 
In the context of Bayesian model-based clustering for high-dimensional data, \cite{tadesse2005} proposed a method for uncovering cluster structure and identifying the discriminating variables simultaneously and exploited reversible jump Markov chain Monte Carlo (MCMC) for posterior sampling.  

\cite{gao2020general} proposed a Bayesian structured linear model, which encompasses the bi-clustering problem as a special case, utilizing a subset-selection prior and establishing an optimal posterior contraction rate for parameter estimation. 

\citet{chandra2021escaping} proposed a Bayesian latent
factor mixture model and investigated the behavior of the induced cluster memberships as $p$ goes to infinity whereas $n$ remains fixed. 

However, a general theoretical framework for Bayesian analysis of high-dimensional clustering in terms of both parameter estimation and mis-clustering error is yet to be established.


The Gaussian mixture model we consider lies in the regime of high dimensionality with sparsity structures.
There has been a growing interest in Bayesian inference with sparsity-enforcing priors. One commonly-used prior is the spike-and-slab prior \citep{mitchell1988}, which is a mixture of a point mass at zero and
a relatively ``flat" absolutely continuous density. 
The spike-and-slab LASSO prior \citep{rockova2018} borrows the similarity between the LASSO and Laplace prior, and combines it with a continuous version of the spike-and-slab prior. Theoretical properties of the spike-and-slab LASSO prior were well studied in the context of regression models, graphical models, and Gaussian sequence models (see \citealp{SSLreview} for a review). Another class of sparsity-enforcing priors is global-local shrinkage priors, such as the horseshoe prior \citep{carvalho2009} and the Dirichlet-Laplace prior \citep{bhattacharya2015}. We refer the readers to \citet{tadesse2021handbook} and references therein. However, when these types of priors are adapted to high-dimensional sparse clustering problems with an unknown number of clusters, their theoretical properties remain unclear.

This paper presents the Bayesian analysis of a high-dimensional sparse Gaussian mixture model using a spike-and-slab LASSO prior and establishes the optimality of the proposed estimation procedure.
Our main contribution is threefold. 
First, we fully characterize the minimax rate for parameter estimation in the high-dimensional sparse Gaussian mixture model, in which the number of clusters is allowed to grow with sample size. A frequentist constrained maximum likelihood estimator (MLE) is proved to achieve minimax optimality. Second, since the frequentist optimization-based estimator is computationally intractable, we propose a Bayesian sparse Gaussian mixture model to address this computational challenge, and establish its theoretical properties by showing that the posterior contraction rate for parameter estimation is minimax optimal. Third, we obtain a posterior contraction rate for the mis-clustering error using tools from matrix perturbation theory \citep{yu2014}. 
To the best of our knowledge, this paper represents the first effort in establishing the theoretical results of parameter estimation and clustering recovery in high-dimensional Bayesian sparse Gaussian mixture model with a growing number of clusters.

The rest of this paper is organized as follows. In Section \ref{sec:model}, we introduce the high-dimensional clustering problem and our model, establish the minimax lower bound for parameter estimation, and propose a frequentist constrained MLE that achieves the minimax lower bound. Section \ref{sec:theory} elaborates on the main theoretical results, including the optimal posterior contraction rate and the mis-clustering error. 
We demonstrate the practical performance of the proposed method through simulation studies in Section \ref{sec:simu} and a real-world application to clustering single-cell RNA sequencing data in Section \ref{sec:real}. 

\noindent\textbf{Notations:} 
Let $|S|$ denote the cardinality of $S$ if the set $S$ is finite or the volume (Lebesgue measure) of $S$ if $S$ is a Lebesgue-measurable infinite subset in Euclidean space. Denote $[n]$ as the set of all consecutive integers $\{1,\dots,n\}$. We use $\lesssim$ and $\gtrsim$ to denote the inequality up to a constant. In other words, $a\lesssim(\gtrsim\text{, resp.})$ $b$ if $a\leq(\geq\text{, resp.})$ $Cb$ for some constant $C>0$. We write $a\asymp b$ if $a\lesssim b$ and $b\lesssim a$. We use $\lfloor x \rfloor$ to denote the greatest integer less than or equal to the real number $x$ and $\lceil x \rceil$ to denote the smallest integer greater than or equal to the real number $x$.
For a $p$-dimensional vector $\bm{x}\in\mathbb{R}^p$, we denote $x_i$ as the $i$th coordinate of $x$. Also, we denote $\|\bm{x}\|_1=\sum_{i=1}^p |x_i|$ as the $\ell_1$-norm, $\|\bm{x}\|_2=\sum_{i=1}^p |x_i|^2$ as the $\ell_2$-norm, $\|\bm{x}\|_\infty=\max_{i\in[p]}|x_i|$ as the $\ell_\infty$-norm, and $\|\bm{x}\|_0=\sum_{i=1}^p\mathbbm{1}(x_i\neq 0)$. 
For any matrix $\bA\in\mathbb{R}^{n\times m}$, let $A_{ij}$ denote the $(i,j)$-entry of $\bA$ and let $\bA_{i\cdot}$ and $\bA_{\cdot j}$ be the $i$th row and $j$th column of $\bA$, respectively. We denote $\|\bA\|_F=\sqrt{\sum_{i=1}^n\sum_{j=1}^m|A_{ij}|^2}$ to be the Frobenius norm of $\bA$ and $\|\bA\|_2$ to be the spectral norm of $\bA$. 
We denote $\bA \succ \bm{0}$ if $\bA$ is a positive definite matrix.
The prior and posterior  probability distributions are denoted as $\Pi$ and the corresponding densities with respect to the underlying $\sigma$-finite measure (whenever it exists) are denoted as $\pi$. 
We denote $D_{KL}(\mathbb{P}\|\mathbb{Q})$ the Kullback–Leibler divergence between any probability measures $\mathbb{P}$ and $\mathbb{Q}$. The $\epsilon$-packing number of a metric space $\Theta$ with respect to the metric $d$, which is the maximum number of pairwise disjoint balls contained $\Theta$ with radii $\epsilon$, is denoted as $M(\epsilon,\Theta,d)$. In the rest of the paper, we will use an asterisk to represent the ground true values of the parameters that give rise to the data distribution. 

\section{Model}\label{sec:model}

\subsection{Gaussian mixture model and clustering}
Let $\by=[\by_1,\dots,\by_n]$ be a $p\times n$ data matrix, where rows represent variables or features, and columns represent observations. We assume that the data exhibits a clustering structure that can be described through a Gaussian mixture model as follows. Let $\bmu_1,\dots,\bmu_K\in\mathbb{R}^p$ be the cluster centroids of the respective clusters, where $K\geq 1$ is the number of clusters. Let $\bz=(z_1,\dots,z_n)^T\in[K]^n$ be the cluster membership vector for observations, with $z_i = k$ indicating that $\by_i$ belongs to the $k$th cluster.
The distribution of $\by_i$ is given by 
\begin{align}\label{model}
    \by_i=\bmu_{z_i}+\bm{\epsilon}_i,  
\end{align}
where $\bm{\epsilon}_i\sim N_p(\bm{0}, \bSigma)$ independently for $i\in [n]$. 
The goal is to estimate the cluster centroids $\bmu_1,\ldots,\bmu_K$ as well as to recover the latent cluster membership vector $\bz$. 
This paper considers the asymptotic regime where both $p$ and $n$ go to infinity and $p/n\to\infty$. 
When $p$ does not exceed $n$, \citet{azizyan2013} proved that the expected clustering accuracy (which will be defined formally later) depends on the dimension $p$ through the rate $\sqrt{p/n}$ in the two-cluster problem without additional structural assumptions. Under the regime that $p/n\to\infty$ considered in our framework, we posit the following sparse structure on the 
cluster mean vectors $\bmu_1,\ldots,\bmu_K$. Denote $\bmu = [\bmu_1,\ldots,\bmu_K]$ 
as the matrix concatenated by the mean vectors of all clusters and define the support of $\bmu$ as
$\text{supp}(\bmu)=\{j\in[p]:(\bmu_{j\cdot})^T\neq 0\}$.
We say that $\bmu$ is jointly $s$-sparse if $|\text{supp}(\bmu)|\leq s$. Moreover, we require that not only each $\bmu_k$ has at most $s_n$ non-zero coordinates, 
namely, $\sum_{j=1}^p\mathbbm{1}(\mu_{kj}\neq 0)\leq s_n$ for all $k\in [K]$, but also that $\bmu$ is jointly $s_n$-sparse. We assume 
$s_n\to\infty$ as $n\to\infty$. 
In the sequel, we will drop the subscript $n$ from $s_n$ and write $s = s_n$ for notation simplicity, but the readers should be reminded that $s$ depends on $n$ implicitly. 

Denote $\bm{e}_i\in\mathbb{R}^K$ the unit vector that has value 1 at the $i$th coordinate and $0$ elsewhere. Let 
$\bl=[\bm{l}_1,\dots,\bm{l}_n]^T\in\mathbb{R}^{n\times K}$ where $\bm{l}_i=\bm{e}_{z_i}$. Then $\bl$ is the matrix whose rows represent cluster memberships of the $n$ observations. It follows immediately that the expected data matrix can be written as
$\E(\by)=\bmu\bl^T$. Namely, our model can be represented as a signal-plus-noise model matrix 
$\by=\bmu\bl^T+\bE$,
 where $\bE=[\bm{\epsilon}_1,\dots,\bm{\epsilon}_n]$ is the mean-zero noise matrix where $\bm{\epsilon}_i$'s are independent normal random vectors with mean zero and covariance matrix $\bSigma$. 
 As $K$ is typically much smaller than $n$, the above representation of the model is similar to those in \citet{cape2018} and  \citet{agterberg2022entrywise} because the data matrix has a low expected rank. Nevertheless, the sparse Gaussian mixture model differs from \citet{cape2018} and
 \citet{agterberg2022entrywise} in that the columns of the expected data matrix have the clustering structure and the rows have the sparsity structure. Following the previous convention of using asterisk to indicate true parameter values, we denote $(\bmu^*, \bl^*, \bSigma^*)$ the underlying truth of $(\bmu, \bl, \bSigma)$ throughout the rest of the paper.

\subsection{Minimax lower bound}\label{ssec:minimax_lower}

One of the major theoretical contributions of this paper is to study the 
estimation error of the mean matrix $(\bmu^*)(\bl^*)^T$ in the proposed sparse Gaussian mixture model. 

This differs from most existing minimax results in the clustering literature, which predominantly focus on evaluating the mis-clustering error.
As the first step towards the complete theory, we establish the minimax lower bound.
Formally, consider the following parameter space 
\begin{align*}
\Theta^*_K = \{(\bmu, \bl, \bSigma):& \bmu\in\mathbb{R}^{p\times K}, \bl\in\mathcal{L}_K,|\mathrm{supp}(\bmu)|\leq s, \|\bmu\bl^T\|_F^2=O(sn),\\
&\bSigma \succ \bm{0}, 0 < m_{\bSigma} \leq \lambda_{\min}(\bSigma) \leq \lambda_{\max}(\bSigma) \leq M_{\bSigma} < \infty
\},
\end{align*} 
where $\mathcal{L}_K=\{\bl\in\mathbb{R}^{n\times K}:\bl=[\bm{l}_1,\dots,\bm{l}_n]^T,\bm{l}_i\in\{0,1\}^K, \|\bm{l}_i\|_0=1 \text{ for all } i\in[n]\}$ 
is the set of cluster assignment matrices
, $\mathrm{supp}(\bmu)$ is the set of indices of the non-zero rows of $\bmu$, and $\lambda_{\min}(\bSigma)$ and $\lambda_{\max}(\bSigma)$ represents the smallest and largest singular value of $\bSigma$ respectively. We also denote $\Delta =\min_{k_1\neq k_2}\|\bmu^*_{k_1}-\bmu^*_{k_2}\|_2$ as the minimum separation of the cluster centers.

We next present a collection of assumptions that are necessary in  theoretical analyses.
\begin{assumption}\label{assump1}
(Low rank) $K\log n\lesssim \log p$, $K\leq s$.
\end{assumption}
\begin{assumption}\label{assump3}
(Minimum separation) 
$\Delta \geq \frac{1}{n^q}$ for some constant $q > 0$.
\end{assumption} 
\vspace{-.1in}
\begin{assumption}\label{assump4}
(High dimensionality) $p/n\to\infty$.
\end{assumption}

Assumption \ref{assump1} is a mild low-rank assumption and can be satisfied even with increasing $K$.  
Assumption \ref{assump3} requires that the centers of different clusters are well separated and is common in high-dimensional clustering problems. 
 
It also guarantees the identifiability of $(\bmu, \bl)$ up to a permutation.

Assumption \ref{assump4} requires $p/n\to\infty$ and it describes the high-dimensional nature of the problem.
Below, Theorem \ref{thm2} establishes the minimax lower bound for estimating the mean matrix with regard to the Frobenius norm. 
\begin{theorem}\label{thm2}
Let $\by = (\bmu^*)(\bl^*)^T + \bE$  where each column of $\bE$ is normal random vector with mean zero and covariance matrix $\bSigma^*$.
Assume Assumptions \ref{assump1}-\ref{assump4} hold.
If $s\leq p/4$,
then there exists a constant $C>0$ such that
\begin{align*}
\inf_{\widehat{\bmu},\widehat{\bl}}\sup_{(\bmu^*,\bl^*,\bSigma^*)\in\Theta^*_K}\E_*\left\{\|\widehat{\bmu}\widehat{\bl}^T- (\bmu^*)(\bl^*)^T\|_F^2\right\}
\geq C\left(s \log p 
+ n\log K 
\right)
\end{align*}
for sufficiently large $n$, where $\E_*$ denotes the expected value with respect to $(\bmu^*,\bl^*,\bSigma^*)$.
\end{theorem}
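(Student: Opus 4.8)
The plan is to prove the two summands separately, each by a standard information-theoretic (Fano) argument on a carefully chosen finite sub-family of $\Theta^*_K$, and then to combine them using $\max(a,b)\ge\tfrac12(a+b)$. Throughout I would exploit the fact that for two mean matrices $M_1=\bmu_1\bl_1^T$ and $M_2=\bmu_2\bl_2^T$ the data are Gaussian with identity covariance, so that $D_{KL}(N(M_1,\bI)\|N(M_2,\bI))=\tfrac12\|M_1-M_2\|_F^2$. This makes Fano's method especially clean: I only need packings whose Frobenius separation is large while the pairwise Frobenius distances (hence the KL divergences) stay below a constant multiple of the log-cardinality of the packing.

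For the term $s\log p$ (the sparse-estimation difficulty) I would freeze the assignment so that all $n$ observations belong to cluster $1$, place the $K-1$ empty-cluster centroids on a fixed set of coordinates disjoint from the rest (each of magnitude $\ge c$, so Assumption \ref{assump3} holds while they contribute only $K-1=o(s)$ to the joint support), and vary $\bmu_1^*$ over a packing of $(s-K+1)$-sparse vectors. A Gilbert--Varshamov constant-weight-code bound yields $L$ such vectors whose supports differ in at least $s/2$ positions with $\log L\gtrsim s\log(p/s)$, which matches $s\log p$ under the stated condition on $s$; setting every nonzero entry to $\gamma$ with $\gamma^2\asymp(\log p)/n$ gives pairwise $\ell_2$-separation $\gtrsim s\gamma^2\asymp(s\log p)/n$ and pairwise KL $\le ns\gamma^2\asymp s\log p$. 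Since all $n$ columns equal $\bmu_1^*$, the induced mean matrices satisfy $\|M^{(i)}-M^{(j)}\|_F^2=n\|\bmu_1^{(i)}-\bmu_1^{(j)}\|_2^2$, so after choosing the constant in $\gamma^2$ small enough that the KL stays below $\tfrac12\log L$, Fano gives $\inf\sup\E_*\|\widehat{\bmu}\widehat{\bl}^T-(\bmu^*)(\bl^*)^T\|_F^2\gtrsim n\cdot(s\log p)/n=s\log p$.

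For the term $n\log K$ (the clustering difficulty) I would instead freeze the centroids and vary the assignment. Take $\bmu_k^*=\tau\,\bm{e}_{j_k}$ on $K$ distinct coordinates $j_1,\dots,j_K$: these are $1$-sparse (joint support $K\le s$) and mutually orthogonal with pairwise squared distance $2\tau^2$, and choosing $\tau^2\asymp\log K$ keeps $\|\bmu\bl^T\|_F^2=n\tau^2\asymp n\log K\lesssim sn$ while giving separation $\ge c$. Varying $\bl^*$ over a $K$-ary code of length $n$ with relative minimum Hamming distance a small constant yields, again by Gilbert--Varshamov, a family with $\log L\gtrsim n\log K$ and pairwise Hamming distance $\gtrsim n$. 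Because relabeling observation $i$ shifts the $i$th column of the signal by $\sqrt2\,\tau$, one has $\|M(\bl)-M(\bl')\|_F^2=2\tau^2 d_H(\bl,\bl')$, so the separation is $\gtrsim\tau^2 n\asymp n\log K$ while the KL is at most $\tau^2 n$; taking the constant in $\tau^2$ small relative to the code rate makes the KL at most $\tfrac12\log L$, and Fano delivers $\inf\sup\E_*\|\widehat{\bmu}\widehat{\bl}^T-(\bmu^*)(\bl^*)^T\|_F^2\gtrsim n\log K$.

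The main obstacle is bookkeeping rather than a single hard estimate: each sub-family must be verified to lie inside $\Theta^*_K$ simultaneously with respect to the joint $s$-sparsity budget (forcing me to spend only $s-K+1$ coordinates on the packing in the first case and to confirm $K\le s$ and $\log K\lesssim s$ in the second), the energy constraint $\|\bmu\bl^T\|_F^2=O(sn)$ (which needs $\log p\lesssim n$, consistent with the stated regime and $s\to\infty$), and the separation Assumption \ref{assump3}. The other delicate point is calibrating the magnitudes $\gamma$ and $\tau$ so the pairwise KL divergences remain a small fraction of $\log L$, exactly what Fano requires; these constants, together with the Gilbert--Varshamov rates, propagate into the final $C$. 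Taking the larger of the two bounds and invoking $\max(a,b)\ge\tfrac12(a+b)$ then yields the claimed $C(s\log p+n\log K)$.
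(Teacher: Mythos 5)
Your proposal is correct and follows essentially the same route as the paper's proof: generalized Fano's lemma applied to two Varshamov--Gilbert-type packings inside $\Theta^*_K$ --- one varying the sparse means with the labels frozen (yielding $s\log(p/s)\asymp s\log p$), one varying the label assignment with the means frozen (yielding $n\log K$) --- with the KL divergences computed via $D_{KL}=\tfrac12\|M_1-M_2\|_F^2$ and the two bounds combined through a max-versus-sum step. The only differences are cosmetic (you park the unused centroids via empty clusters where the paper uses a balanced assignment with a shared sparse direction and distinct first-coordinate scalars, and the paper additionally constructs a third subspace $\Theta^*_{K3}$ whose contribution $sKn_{\min}/n$ is dominated in the final rate), and your flagged bookkeeping concerns --- the joint sparsity budget, the energy constraint requiring $\log p\lesssim n$, and calibrating signal magnitudes against Assumption \ref{assump3} --- are exactly the same constant-level issues the paper's construction handles.
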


The key challenge in the proof of Theorem \ref{thm2} lies in designing suitable subsets of the parameter space for $(\bmu, \bl)$. We construct three parameter subspaces, each essentially fixing $(\bmu)_S$, $S$, and $\bl$, respectively. By controlling the Kullback–Leibler diameter and entropy of each subspace, we apply Fano's lemma to derive minimax lower bounds on the convergence rate in each subspace, integrating them to obtain the final minimax lower bound.

The minimax lower bound consists of two parts: $s\log p$ and $n\log K$. The $s\log p$ term describes the logarithmic complexity of selecting $s$ non-zero coordinates among $p$ variables. It appears repeatedly in the minimax rates for high-dimensional problems where sparsity plays an important role, including the sparse normal means problem \citep{10.1214/12-AOS1029}
and the sparse linear regression 
\citep{10.1214/15-AOS1334}. 
The term $n\log K$ comes from the logarithmic complexity of assigning $n$ points into $K$ clusters and also appears in the minimax risk for parameter estimation in stochastic block models \citep{ghosh2020posterior}. 

\begin{remark}\label{remark:minimax_lower}

This result fills the gap in the literature of high dimensional low-rank matrix estimation, particularly in scenarios where both sparsity and clustering structures exist. 

When $\by$ is a $p\times n$ random matrix that can be written as $\by = \bX+\bE$, where $\bE$ is a $p\times n$ noise matrix whose entries are independent standard normal random variables and $\bX$ is a $p\times n$ rank-$K$ matrix, 
\citet{yang2016} showed that, if $\bX$ not only is low rank but also has only an $s\times l$ non-zero submatrix, then the minimax lower bound 
is $K(s+l)+s\log({ep}/{s})+l\log({en}/{l})$.
Our minimax lower bound is sharper than the above bound because the right singular subspace induced by $\bl$ contains a clustering structure, whereas the matrix $X$ considered in \cite{yang2016} does not have a structured right singular subspace. 

\end{remark}

\subsection{Minimax upper bound and a constrained maximum likelihood estimator}\label{ssec:minimax_upper}

From the frequentist perspective, an ideal method for 
parameter estimation in a well-specified statistical model is the maximum likelihood estimator (MLE). In this subsection, we propose a constrained MLE for estimating the mean matrix $(\bmu^*)(\bl^*)^T$.  We prove that the risk bound of this estimator achieves the minimax lower bound, thereby showing that the minimax lower bound coincides with the minimax risk modulus a multiplicative constant. 

Assuming the number of clusters 
$K$ is known, 
we consider the parameter space
$\Theta_K=
\{(\bmu,\bl):\bmu\in\mathbb{R}^{p\times K}, \bl\in\mathcal{L}_K,|\mathrm{supp}(\bmu)|\leq s
\} $
and define the following constrained MLE
\begin{align}\label{equ3}
    (\widehat{\bmu}, \widehat{\bl}) = \argmin_{(\bmu, \bl)\in \Theta_K}\|\by - \bmu\bl^T\|_F^2.
\end{align}
It is worth noting that the parameter space in the constrained MLE is not necessarily compact. However, by characterizing a compact neighbor of $(\bmu^*)(\bl^*)^T$ and controlling the complexity inside and outside this neighbor separately, we can establish the risk bound of the constrained MLE and show that it achieves the minimax lower bound in Theorem \ref{thm2}.
\begin{theorem}\label{thm3}
Suppose that $(\bmu^*,\bl^*,\bSigma^*)\in\Theta_K^*$ and  $(\widehat{\bmu}, \widehat{\bl})$ is defined as in (\ref{equ3}) and Assumptions \ref{assump1}-\ref{assump4} hold. Then there exists some constant $c > 0$ such that 
\begin{align*}
    \sup_{(\bmu^*,\bl^*,\bSigma^*)\in \Theta_K^*}\E_*\left\{\|(\bmu^*)(\bl^*)^T -  \widehat{\bmu}\widehat{\bl}^T\|^2_F\right\} \leq c\left(n\log K + s\log p\right),
\end{align*}
where $\E_*$ denotes the expected value with respect to $(\bmu^*,\bl^*,\bSigma^*)$.
\end{theorem}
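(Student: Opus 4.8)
The plan is to run the classical ``basic inequality'' argument for constrained least squares and then reduce the problem to controlling the supremum of projected Gaussian noise over a finite collection of low-dimensional subspaces. Write $M^* = (\bmu^*)(\bl^*)^T$ and $\widehat M = \widehat\bmu\widehat\bl^T$. Since $(\bmu^*,\bl^*)\in\Theta_K^*\subseteq\Theta_K$ is feasible in (\ref{equ3}), the optimality of $(\widehat\bmu,\widehat\bl)$ gives $\|\by-\widehat M\|_F^2\le\|\by-M^*\|_F^2$. Substituting $\by=M^*+\bE$ and expanding the square yields the basic inequality $\|\widehat M-M^*\|_F^2\le 2\langle\bE,\widehat M-M^*\rangle$, so everything reduces to bounding the cross term.

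To handle the cross term I would exploit the combinatorial-plus-linear structure of the feasible matrices. For a support set $S\subseteq[p]$ with $|S|\le s$ and a membership matrix $\bl\in\mathcal{L}_K$, define the linear subspace $V_{S,\bl}=\{\bmu\bl^T:\mathrm{supp}(\bmu)\subseteq S\}\subseteq\mathbb{R}^{p\times n}$, whose dimension is at most $sK$. By construction $\widehat M\in V_{\widehat S,\widehat\bl}$ with $\widehat S=\mathrm{supp}(\widehat\bmu)$, and $M^*\in V_{S^*,\bl^*}$, so the difference $\widehat M-M^*$ lies in $V:=V_{\widehat S,\widehat\bl}+V_{S^*,\bl^*}$, a subspace of dimension at most $2sK$. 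Because $\widehat M - M^*\in V$ and $P_V$ is the orthogonal projection onto $V$, Cauchy--Schwarz gives $\langle\bE,\widehat M-M^*\rangle=\langle P_V\bE,\widehat M-M^*\rangle\le\|P_V\bE\|_F\,\|\widehat M-M^*\|_F$. Combining with the basic inequality and cancelling one factor of $\|\widehat M-M^*\|_F$ produces $\|\widehat M-M^*\|_F\le 2\|P_V\bE\|_F$, hence the deterministic bound $\|\widehat M-M^*\|_F^2\le 4\sup_{(S,\bl)}\|P_{V_{S,\bl}+V_{S^*,\bl^*}}\bE\|_F^2$, where the supremum over all admissible $(S,\bl)$ removes the data-dependence of $V$.

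It then remains to control this supremum in expectation. For each fixed $(S,\bl)$ the quantity $\|P_{V_{S,\bl}+V_{S^*,\bl^*}}\bE\|_F^2$ is a $\chi^2$ random variable with at most $2sK$ degrees of freedom, so a standard $\chi^2$ tail bound governs its deviations. The number of admissible pairs $(S,\bl)$ is at most $\sum_{j\le s}\binom{p}{j}\cdot K^n$; under $s\le p/4$ and $s\log(p/s)=O(s\log p)$ the log of the support count is $\lesssim s\log p$, while $\log(K^n)=n\log K$, so the log-cardinality is $\lesssim s\log p+n\log K$. A union bound over these $\exp\{O(s\log p+n\log K)\}$ subspaces, followed by integrating the resulting sub-exponential tail, yields $\E_*\sup_{(S,\bl)}\|P_{V_{S,\bl}+V_{S^*,\bl^*}}\bE\|_F^2\lesssim sK+s\log p+n\log K$ uniformly over $(\bmu^*,\bl^*)\in\Theta_K^*$, and therefore $\sup_{\Theta_K^*}\E_*\|\widehat M-M^*\|_F^2\lesssim sK+s\log p+n\log K$. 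The final step invokes Assumption \ref{assump1}, which gives $K\lesssim\log p/\log n\le\log p$ for large $n$, so that $sK\lesssim s\log p$ is absorbed and the claimed rate $c(n\log K+s\log p)$ follows.

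I expect the main obstacle to be the uniform control in the third step. Because the selected subspace $V_{\widehat S,\widehat\bl}$ is data-dependent, one cannot treat $\|P_V\bE\|_F^2$ as a single $\chi^2$ variable; the resolution is to pay a union bound over all $\sum_{j\le s}\binom{p}{j}K^n$ combinatorial models. The delicate bookkeeping is to verify that this log-cardinality matches $s\log p+n\log K$ precisely—which is exactly where the sparsity regularity condition $s\log(p/s)=O(s\log p)$ enters—while the per-model dimension $sK$ is rendered negligible only through the low-rank Assumption \ref{assump1}. A secondary technical point is passing cleanly from the high-probability $\chi^2$ tail bound to the expectation bound via integration, but this is routine once the tail has the correct sub-exponential form.
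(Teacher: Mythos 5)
Your proposal is correct, and it takes a genuinely different route from the paper's own proof. Both arguments open with the same basic inequality for the constrained least-squares estimator, but from there the paper normalizes the error, writes $\E_*\|\widehat{\bmu}\widehat{\bl}^T-(\bmu^*)(\bl^*)^T\|_F\leq 2\E_*\sup\langle\bE,\cdot\rangle_F$ over the unit-normalized class $\widetilde{\Theta}_K$, and spends the bulk of the proof computing the covering number of that class so as to apply Dudley's entropy integral (Corollary 8.5 of Kosorok): a six-step peeling scheme over annuli $\mathcal{E}_j^K$ in which the minimum-separation Assumption \ref{assump3} forces $\bl$ to agree with $\bl^*$ up to a permutation on small annuli, and the constraint $\|(\bmu^*)(\bl^*)^T\|_F^2=O(sn)$ from $\Theta_K^*$ controls the middle annulus. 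You instead observe that $\widehat{\bmu}\widehat{\bl}^T-(\bmu^*)(\bl^*)^T$ lies in a finite union of linear subspaces $V_{S,\bl}+V_{S^*,\bl^*}$ of dimension at most $2sK$, so that projecting the noise and applying Cauchy--Schwarz reduces everything to the maximum of $\exp\{O(s\log p+n\log K)\}$ correlated $\chi^2_{2sK}$ variables, dispatched by a union bound and tail integration. Your route is more elementary (no chaining, no peeling) and scale-free: it never invokes Assumption \ref{assump3} or the norm constraint in $\Theta_K^*$, so it establishes the upper bound on a slightly larger parameter set (and the conditions $s\leq p/4$, $s\log(p/s)=O(s\log p)$ you cite are not actually needed, since $\log\binom{p}{s}\lesssim s\log p$ holds unconditionally). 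It also controls $\E_*\|\widehat{\bmu}\widehat{\bl}^T-(\bmu^*)(\bl^*)^T\|_F^2$ directly, whereas the paper's displayed inequality bounds only the first moment $\E_*\|\cdot\|_F$ and leaves the passage to the stated second-moment bound implicit --- a point your deterministic squaring of $\|\widehat{M}-M^*\|_F\leq 2\sup\|P_V\bE\|_F$ sidesteps cleanly. What the paper's heavier machinery buys is the multi-scale covering analysis itself, in particular the identification of $\bl$ with $\bl^*$ below the separation threshold, which would be essential if the error class were not an exact finite union of subspaces; your cruder union bound loses nothing here precisely because it is. Both proofs finish identically, using the low-rank Assumption \ref{assump1} to absorb the per-model dimension term ($sK$ in your case, $sK\log n$ in the paper's) into $s\log p$.
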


The proof of Theorem \ref{thm3} relies on transforming the upper bound of the convergence rate into an empirical process on $\widetilde{\Theta}_K$, which is the normalization of the feasible set of the estimator $\Theta_K$. This poses challenges due to the infinite entropy of $\Theta_K$. To overcome this, we decompose $\widetilde{\Theta}_K$ into layers $(\mathcal{E}_j^K)_{j\in\mathbb{Z}}$ with manageable entropy. We classify these layers into three scenarios: small volume, large volume, and intermediate volume. For the small volume case, we establish that $\bl$ equals $\bl^*$ up to a permutation within $\mathcal{E}_j^K$, simplifying the complexity to focus on $\bmu$ alone. In the large volume case, we control the entropy separately for the spaces of $\bmu$ and $\bl$. Lastly, in the intermediate volume case, we leverage the ellipsoidal nature of $\mathcal{E}_j^K$ and its packing to bound the covering numbers efficiently.

Despite the theoretical optimality of the constrained MLE, it is computationally intractable in general since the feasible set $\Theta_K$ is nonconvex and involves discrete structures. 
In addition, the implementation of the constrained MLE requires to pre-specify the sparsity level $s$ and the number of clusters $K$, which are usually unknown in practice.
These computational challenges motivate us to develop a Bayesian method that
can be implemented conveniently using an MCMC sampler without specifying $s$ and $K$ {\it a priori}. 

\subsection{Bayesian sparse high-dimensional Gaussian mixture model}\label{ssec:prior}


As described in the previous subsection, the optimization-based constrained MLE is computationally intractable due to the non-convexity and discrete structure of the problem. One may apply the EM algorithm, which iterates between a clustering step given the recently updated parameter values and a parameter estimation step given the recently updated cluster memberships, to address this issue. For example,  \citet{caichime2019} proposed an approach that estimates the sparse discriminant vector and obtains the clustering memberships in the Expectation step to avoid the singularities of sample covariance matrices in high dimensions.
Another approach is spectral clustering \citep{luxburg2004spectralclustering}. However, the optimality of spectral clustering is only established when $p=o(n\Delta)$ without sparsity structure \citep{loffler2020optimality}. In this subsection, we propose a Bayesian approach to estimate the high-dimensional sparse Gaussian mixture model. As will be seen later, the proposed Bayesian method has a minimax-optimal posterior contraction rate.

We deliberately consider a misspecified sampling model: $\by_i = \bmu_{z_i} + \bm{\epsilon}_i$, where the error term $\bm{\epsilon}_i$ follows a multivariate normal distribution with mean vector zero and identity covariance matrix $\bI_p$. This intentional simplification is motivated by theoretical convenience, since our primary focus lies in the mean matrix $\bmu\bl^T$ . As we will prove later, as long as the spectrum of the true covariance matrix $\bSigma^*$ is bounded, the posterior distribution of $\bmu\bl$ concentrates on the true parameter $(\bmu^*)(\bl^*)^T$ at a minimax-optimal rate.

To promote sparsity, we use the spike-and-slab LASSO prior \citep{rockova2018} for the mean vectors of clusters. 
The spike-and-slab LASSO prior can be viewed as a continuous relaxation of the spike-and-slab prior \citep{mitchell1988}, which is a mixture of a point mass at zero (referred to as the ``spike'' distribution) and an absolutely continuous distribution (referred to as the ``slab'' distribution).
Formally, for $\bm{x}\in\mathbb{R}^{p}$, the spike-and-slab LASSO prior is defined as follows: for $j\in[p]$,
$\pi(x_j\mid \lambda_0,\lambda_1,\xi_j)=(1-\xi_j)\psi(x_j\mid\lambda_0)+\xi_j\psi(x_j\mid \lambda_1)$ and $(\xi_j\mid\theta)\sim\text{Bernoulli}(\theta),$
where $\psi(x\mid\lambda)=(\lambda/2)\exp(-\lambda |x|)$ is the density of Laplace distribution with mean $0$ and variance $2/\lambda^2$. By assuming $\lambda_0\gg \lambda_1$,   
$\psi(x_j\mid\lambda_0)$ closely resembles the ``spike" distribution in the spike-and-slab prior since it is highly concentrated at $0$, whereas 
$\psi(x_j\mid \lambda_1)$ plays the role of the ``slab" distribution. We follow the notation in \citet{rockova2018} and use $\text{SSL}(\lambda_0,\lambda_1,\theta)$ to denote this prior model. In the context of our proposed sparse Gaussian mixture model, we define the joint-SSL($\lambda_0$,$\lambda_1$,$\theta$) as follows to further incorporate the case where the mean vectors 
$\bmu_1,\dots,\bmu_K\in\mathbb{R}^p$ share the same sparsity pattern: given $K$, for $j\in[p]$,
\begin{align*}  \pi(\mu_{1j},\cdots,\mu_{Kj}\mid\lambda_0,\lambda_1,\xi_j)&=\prod_{k=1}^K\left((1-\xi_j)\psi(\mu_{kj}\mid\lambda_0)+\xi_j\psi(\mu_{kj}\mid\lambda_1)\right),\\
    (\xi_j\mid\theta)&\sim\text{Bernoulli}(\theta).
\end{align*}
Under this prior distribution, the random vectors 
$\bmu_1,\dots,\bmu_K$ are conditionally independent given $K$ and a sparsity indicator vector $\bm{\xi}\in\{0,1\}^p$ which controls the common sparsity structure. We further assume that $\theta\sim\mathrm{Beta}(1, \beta_\theta)$, where $\beta_\theta = p^{1 + \kappa}\log p$ for some constant $\kappa > 0$. The choice of the hyperparameter $\beta_\theta$ is selected for technical reasons.

We now specify the sparse Gaussian mixture model. 
Given $K$, for cluster membership indicators $z_1,\dots,z_n$, we assign a categorical prior with a $K$-dimensional probability vector $\bw = (w_1,\ldots,w_K)^T$, whose hyperprior distribution is a $K$-dimensional symmetric Dirichlet distribution with the shape parameter $\alpha > 0$.
We assign a joint-SSL prior for the mean vectors $\{\bmu_k\}_{k=1}^K$ to adapt to the joint sparsity. 
To allow for an unknown $K$, we further assign a truncated Poisson distribution to $K$ by letting $\pi(K) \propto e^{-\lambda}\lambda^K/K!$, $K \in [K_{\max}]$, where $K_{\max}$ is a conservative upper bound for $K$ and should be large enough in practice.
Thus, the proposed Bayesian sparse Gaussian mixture model can be expressed as follows: 
\begin{align}
    (\by_1,\ldots,\by_n\mid\bz,\bmu)&\sim N_p(\bmu_k,\bI_p)\quad\mbox{independently}, \label{prior1}\\
    (\bmu_1,\dots,\bmu_K\mid K, \theta)&\sim \text{joint-SSL}(\lambda_0,\lambda_1,\theta), \label{prior2}\\
    (z_1,\ldots,z_n\mid \bw, K)&\sim\mathrm{Categorical}(\bw)\quad\mbox{independently}, \label{prior3}\\
    (\bw\mid K)&\sim\text{Dirichlet}_K(\alpha),\label{prior4}\\ 
    \pi(K) &\propto \frac{e^{-\lambda}\lambda^k}{k!},\quad K\in [K_{\max}]\label{prior5}\\
    \theta &\sim \text{Beta}(1,\beta_\theta) \text{ where } \beta_\theta=p^{1+\kappa}\log p. \label{prior6}
\end{align}


The use of sparsity-enforcing priors in Gaussian mixture models has been widely explored in Bayesian literature. For example, \cite{tadesse2005} and \cite{gao2020general} proposed a discrete subset-selection prior for clustering high-dimensional data. In this paper, we employ a continuous spike-and-slab shrinkage prior. While \cite{tadesse2005} focused solely on computational algorithms without theoretical analysis, and \cite{gao2020general} primarily investigated the theoretical results of parameter estimation, our main contribution lies in establishing the theoretical properties of the proposed model concerning both parameter estimation and mis-clustering error. This represents the first effort in developing a general theoretical framework for Bayesian analyses of high-dimensional clustering.


\section{Theoretical Properties}\label{sec:theory}

\subsection{Posterior contraction rate}\label{ssec:posterior}



In this subsection, 
we show that the posterior contraction rate with respect to the Frobenius norm metric is minimax optimal under the propose Bayesian sparse Gaussian mixture model. All the proofs are deferred to the Appendix. 

By the Bayes formula, the posterior distribution of $\bmu$ and $\bl$ can be written as
\[
\Pi\{(\bmu, \bl)\in \mathcal{E}\mid\by) = \frac{\int_{\mathcal{E}}p_n(\by\mid\bmu, \bl)/p_n(\by\mid\bmu^*, \bl^*)\Pi(d\bmu d\bl)}{\int_\Theta p_n(\by\mid\bmu, \bl)/p_n(\by\mid\bmu^*, \bl^*)\Pi(d\bmu d\bl)},
\]
where $p_n(\by\mid\bmu, \bl) = (2\pi)^{-np/2}\exp\left(-\|\by-\bmu \bl^T\|_F^2/2\right)$
is the likelihood of the data matrix $\by$ with identity covariance matrix and $\mathcal{E}$ is any measurable subset of $\Theta = \bigcup_{K = 1}^{K_{\max}}\mathbb{R}^{p\times K}\times\mathcal{L}_K$. In Theorem \ref{thm1}, we derive the posterior contraction rate under the proposed Bayesian model.

\begin{theorem}\label{thm1}
Let $\by$ be generated from a mixture of $K^*$ Gaussian distributions as in (\ref{model}) with the true mean vectors $\bmu^* = [\bmu_1^*,\ldots,\bmu_{K^*}^*]$ and the true cluster membership matrix $\bl^*$, where $|\mathrm{supp}(\bmu^*)|\leq s$. Suppose Assumptions \ref{assump1} - \ref{assump4} hold. Let $\bmu$ and $\bl$ follow the prior specification in (\ref{prior1})-(\ref{prior6}) with some hyperparameters $\kappa>0$, $\alpha\geq 1$, 
$\lambda_0\geq 2\log (p/s)\sqrt{np/(s\log p)}$
and $1/n^{\gamma} \lesssim\lambda_1\lesssim \sqrt{s\log p/\|\bmu^*\|_F} $ for some constants $\gamma>0$. 
Then for $n$ sufficiently large, we have
$$\Pi\left\{(\bmu, \bl):\|\bmu \bl^T-(\bmu^*)(\bl^*)^T\|_F^2\geq M(s\log p+n\log K_{\max})\bigg\rvert \by\right\}\to 0$$
in $\mathbb{P}_{(\bmu^*, \bl^*, \bSigma^*)}$- probability, for every large constant $M$ and $(\bmu^*, \bl^*, \bSigma^*)\in \Theta^*_K$.
\end{theorem}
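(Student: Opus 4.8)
The plan is to invoke the general posterior contraction framework of Ghosal, Ghosh, and van der Vaart by verifying its three standard sufficient conditions — a prior-mass (Kullback--Leibler) condition, a sieve condition, and an entropy/testing condition — specialized to the present Gaussian location model. A convenient simplification is that, for the data matrix $\by = \bmu\bl^T + \bE$ with $\bE$ having i.i.d.\ standard normal entries, the Kullback--Leibler divergence between the competing models equals $\frac{1}{2}\|\bmu\bl^T - (\bmu^*)(\bl^*)^T\|_F^2$ and the associated second-moment variation is of the same order. Hence the natural semimetric is exactly the Frobenius norm appearing in the statement, and all three conditions can be phrased directly in terms of $\|\cdot\|_F$. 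Writing $\epsilon_n^2 = s\log p + n\log K_{\max}$ for the target squared rate, I would aim to show the posterior concentrates on $\{\|\bmu\bl^T - (\bmu^*)(\bl^*)^T\|_F^2 \leq M\epsilon_n^2\}$.

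First I would establish the prior-mass condition $\Pi\{\|\bmu\bl^T - (\bmu^*)(\bl^*)^T\|_F^2 \leq \epsilon_n^2\} \geq \exp(-c\epsilon_n^2)$ by restricting to a favorable event and lower-bounding its probability factor by factor. Restricting to $K = K^*$ and the true assignment $\bl = \bl^*$, the Poisson prior on $K$ and the Dirichlet--multinomial marginal for $\bl^*$ together contribute at worst $\exp(-c_1 n\log K_{\max})$, absorbed by the $n\log K_{\max}$ budget. Given $\bl = \bl^*$, one has $\bmu\bl^T - (\bmu^*)(\bl^*)^T = (\bmu - \bmu^*)(\bl^*)^T$, so it suffices to make $\bmu$ close to $\bmu^*$ in a cluster-size-weighted Frobenius sense. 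Here the spike-and-slab LASSO structure enters: selecting the slab ($\xi_j = 1$) on $\mathrm{supp}(\bmu^*)$ and the spike elsewhere costs $\exp(-c_2 s\log p)$ after integrating the $\mathrm{Beta}(1,\beta_\theta)$ prior with $\beta_\theta = p^{1+\kappa}\log p$, while concentrating the slab coordinates near $\bmu^*$ contributes a factor governed by $\lambda_1\|\bmu^*\|_1$; the hypothesis $\lambda_1 \lesssim \sqrt{s\log p}/\|\bmu^*\|_F$ is precisely what keeps this within the $s\log p$ budget. The lower bound $\lambda_0 \geq 2\log(p/s)\sqrt{np/(s\log p)}$ guarantees that the spike contribution of the $p - s$ off-support coordinates is negligible in Frobenius norm.

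Next I would construct a sieve $\mathcal{F}_n$ and verify the entropy and sieve-complement conditions. A natural choice restricts to $K \leq K_{\max}$, an effective support of size $\lesssim s$, and a polynomial-in-$n$ bound on $\|\bmu\|_\infty$. The entropy $\log N(\epsilon_n, \mathcal{F}_n, \|\cdot\|_F)$ then decomposes into the $\log\binom{p}{\bar s} \lesssim s\log p$ choices of support; the $K_{\max}^n$, i.e.\ $n\log K_{\max}$, choices of assignment $\bl$; and the covering of the bounded continuous slab values in $\mathbb{R}^{K\bar s}$, whose log-covering number is $\lesssim K\bar s\log n \lesssim s\log p$ once Assumption \ref{assump1} ($K\log n \lesssim \log p$) is invoked. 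Summing gives $\log N \lesssim \epsilon_n^2$. For the complement, the $\mathrm{Beta}(1,\beta_\theta)$ prior drives the probability of an over-large support below $\exp(-C\epsilon_n^2)$, and the Laplace tails control $\{\|\bmu\|_\infty \text{ large}\}$; together these yield $\Pi(\mathcal{F}_n^c) \leq \exp(-C\epsilon_n^2)$ for arbitrarily large $C$. Combining the three conditions in the master theorem delivers the claimed contraction.

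The main obstacle I anticipate is the prior-mass step, because it must simultaneously reconcile three heterogeneous structures — the continuous spike-and-slab sparsity on the rows of $\bmu$, the discrete clustering carried by $\bl$, and the unknown number of clusters $K$ — all within the tight two-term budget $s\log p + n\log K_{\max}$. The delicacy is that the SSL prior is fully continuous, so ``support'' is not literally defined; the spike's small but nonzero mass on every off-support coordinate must be shown to contribute negligibly to the Frobenius error, which is exactly what forces the lower bound on $\lambda_0$, while $\lambda_1$ must be small enough for slab flatness yet large enough to retain prior mass — the stated range for $\lambda_1$ is the balance point. A secondary difficulty is bounding the Dirichlet--multinomial marginal of the true assignment uniformly over cluster-size profiles so that the clustering factor stays within $n\log K_{\max}$.
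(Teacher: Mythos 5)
Your proposal follows essentially the same route as the paper's proof: the Ghosal--Ghosh--van der Vaart scheme with the Frobenius semimetric (justified, as you note, because the KL divergence between the two Gaussian matrix models is $\frac{1}{2}\|\bmu\bl^T-(\bmu^*)(\bl^*)^T\|_F^2$), prior mass obtained by conditioning on $\{K=K^*,\bl=\bl^*\}$ and selecting slab/spike indicators on and off the true support (the paper's Lemma \ref{lemma2} carries out exactly your Dirichlet--multinomial and truncated-Poisson accounting, and handles the slab shift via the normal-scale-mixture representation plus Anderson's lemma, turning your $\lambda_1\|\bmu^*\|_1$ factor into $\lambda_1^2\|\bmu^*\|_F^2\lesssim s\log p$ --- either version fits the budget), tests from sub-Gaussian likelihood-ratio statistics as in Lemma \ref{lemma1}, and entropy split as support choice $\times$ assignment matrices $\times$ a Euclidean covering as in Lemma \ref{lemma4}.

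There is, however, one genuine quantitative gap in your sieve. You take the effective (soft) support to have size $\lesssim s$ and claim $\Pi(\mathcal{F}_n^c)\leq\exp\{-C(s\log p+n\log K_{\max})\}$. The $\mathrm{Beta}(1,\beta_\theta)$ prior with $\beta_\theta=p^{1+\kappa}\log p$ cannot deliver this at that threshold: the event $\{\theta>cs/p^{1+\kappa}\}$ alone has prior probability $(1-cs/p^{1+\kappa})^{\beta_\theta}\approx\exp(-c\kappa\,s\log p)$, so the tail of the soft-support size at any level $\beta s$ is of order $\exp(-c\,s\log p)$, with no $n\log K_{\max}$ term in the exponent. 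In the regime $n\log K_{\max}\gg s\log p$ --- which Assumptions \ref{assump1}--\ref{assump4} permit, e.g. $s\asymp\log n$, $p\asymp n^2$, $K_{\max}$ bounded --- the sieve complement is then not exponentially negligible relative to the KL-ball prior mass $\exp\{-c(s\log p+n\log K^*)\}$, and the remainder term in the master theorem does not vanish. The paper's Lemma \ref{lemma_soft_supp} repairs precisely this point: with $\delta=(1+\kappa)\log p/\lambda_0$ it inflates the threshold to $\beta(s+n\log K/\log p)$, and the Chernoff bound conditioned on $\{\theta\leq c(s+n\log K/\log p)/p^{1+\kappa}\}$ then yields $\exp\{-c(s\log p+n\log K)\}$. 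The inflation is harmless for your entropy count, since $\log\binom{p}{\bar{s}}\lesssim s\log p+n\log K$ for the enlarged $\bar{s}$, so with this one correction the rest of your argument goes through unchanged.
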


The proof of Theorem \ref{thm1} adopts a modified ``testing-and-prior-concentration" approach \citep{ghosal2000}, tailored to address the unique challenges arising from model misspecification in our setting. We rigorously prove the three conditions: (1) The prior distribution puts a sufficient mass on the neighbourhood of the true parameter $(\bmu^*)(\bl^*)^T$; (2) There exists a test function which can distinguish $(\bmu^*)(\bl^*)^T$ from the complement of its neighbourhood in a subset of the parameter space; (3) The prior puts almost all mass on the subset of parameter space in condition (2).
 



\begin{remark}
Recall from Section \ref{sec:model} that the minimax lower bound contains the true number of clusters $K^*$, which is unknown in many applications. The posterior contraction rate obtained in Theorem \ref{thm1} contains a logarithmic factor of the upper bound $K_{\max}$ for $K^*$. If we further assume that $K_{\max} \asymp (K^*)^q$ for any constant $q\geq 1$, the posterior contraction rate matches the minimax lower bound in Theorem \ref{thm2} and is optimal thereafter. For $\lambda_1$ in the  joint-SSL prior, if we further assume $\|\bmu_k^*\|_2^2=O(s)$ for any $k\in[K^*]$, then the upper bound of $\lambda_1$ can be relaxed to $\lambda_1\lesssim \sqrt{\log p/K_{\max}}$, which is a mild condition and can be easily satisfied in practice.

\end{remark}
We assume that the cluster mean vectors are jointly sparse. However,
Theorem \ref{thm1} can be easily generalized to the case where the cluster centers do not share the common sparsity structure. Specifically, each mean vector $\bmu_k$ has at most $s$ non-zero coordinates but the indices of the non-zero coordinates are not necessarily the same across $k\in [K]$. Clearly, the matrix $\bmu = [\bmu_1,\ldots,\bmu_K]$ is jointly $Ks$-sparse. To adapt to the column-wise sparsity of $\bmu$, we modify the hierarchical prior model by letting $\bmu_1,\dots,\bmu_K$ follow the SSL prior independently given $K$: 
\begin{align}\tag{6'}\label{prior2'}
    (\bmu_k\mid K,\theta)\sim\text{SSL}(\lambda_0,\lambda_1,\theta)\quad\text{for } k=1,\dots,K.
\end{align}
The following corollary gives the posterior contraction rate under such a modification.

\begin{corollary}\label{cor1}
Let $\by$ be generated from a mixture of $K^*$ Gaussian distributions as in (\ref{model}) with the true mean vectors $\bmu^* = [\bmu_1^*,\ldots,\bmu_{K^*}^*]^T$ and the true cluster membership matrix $\bl^*$, where $|\mathrm{supp}(\bmu^*)|\leq K^*s$. 
Suppose Assumptions \ref{assump1}-\ref{assump4} hold.
Let $\bmu$ and $\bl$ follow the prior specification in (\ref{prior1}), \eqref{prior2'}, \eqref{prior3}-(\ref{prior6}) with the same hyperparameters as in Theorem \ref{thm1}.
Then for $n$ sufficiently large, we have
\[
\Pi\left\{(\bmu, \bl):\|\bmu \bl^T-(\bmu^*)(\bl^*)^T\|_F^2\geq M(sK_{\max}\log p+n\log K_{\max})\bigg\rvert \by\right\}\to 0
\]
in $\mathbb{P}_{(\bmu^*, \bl^*, \bSigma^*)}$- probability, for every large constant $M$ and $(\bmu^*, \bl^*, \bSigma^*)\in \Theta^*_K$.
\end{corollary}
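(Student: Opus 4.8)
The plan is to re-run the posterior-contraction argument that establishes Theorem \ref{thm1}, which is an instance of the general Ghosal--Ghosh--van der Vaart framework, and to track the single place where the joint-sparsity hypothesis enters. Because the noise matrix $\bE$ has i.i.d.\ standard normal entries, the Kullback--Leibler divergence between the sampling models indexed by $(\bmu,\bl)$ and $(\bmu^*,\bl^*)$ equals $\tfrac12\|\bmu\bl^T-(\bmu^*)(\bl^*)^T\|_F^2$, so it suffices to verify three conditions with $\epsilon_n^2 = sK_{\max}\log p + n\log K_{\max}$: (i) a prior-mass lower bound of order $\exp(-C_1\epsilon_n^2)$ on a Frobenius neighborhood of $(\bmu^*)(\bl^*)^T$; (ii) a sieve $\mathcal{F}_n$ whose complement carries prior mass at most $\exp(-C_2\epsilon_n^2)$ with $C_2$ large; and (iii) an entropy bound $\log N(\epsilon_n,\mathcal{F}_n,\|\cdot\|_F)\lesssim \epsilon_n^2$. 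The only structural change from Theorem \ref{thm1} is that the shared indicator $\xi\in\{0,1\}^p$ is replaced by $K$ conditionally independent $\mathrm{SSL}(\lambda_0,\lambda_1,\theta)$ priors in \eqref{prior2'}, so the effective row-support of $\bmu$ inflates from $s$ to at most $Ks$, while the priors on $\bl$ and $K$ in \eqref{prior3}--\eqref{prior5} are untouched.

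For the prior-mass condition I would factor the neighborhood probability over the three blocks of the hierarchy. The event $K=K^*$ has prior probability bounded below by a constant via the truncated Poisson prior \eqref{prior5}. The sub-event $\bl=\bl^*$, which makes $\bmu\bl^T-(\bmu^*)(\bl^*)^T$ depend only on the columns of $\bmu$, has prior probability at least $\exp(-cn\log K^*)$ under the Dirichlet--categorical prior \eqref{prior3}--\eqref{prior4}, exactly as in Theorem \ref{thm1}. For the means I would apply the columnwise SSL prior-mass estimate from the proof of Theorem \ref{thm1} separately to each $\bmu_k$: selecting the correct column-specific support of size at most $s$ and placing the slab within an $O(\epsilon_n/\sqrt{K^*})$ Frobenius ball of $\bmu_k^*$ costs $\exp(-c\,s\log p)$ per column. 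Since the $K^*$ columns are independent given $\theta$, the combined mean contribution is $\exp(-cK^*s\log p)$, and multiplying the three blocks yields a lower bound $\exp(-C_1(K^*s\log p + n\log K^*))\geq \exp(-C_1\epsilon_n^2)$, using $K^*\leq K_{\max}$. The role of the hyperparameter conditions on $\lambda_0$ and $\lambda_1$ in Theorem \ref{thm1} is unchanged: $\lambda_0$ large keeps the spike tail negligible on the support, while the upper bound on $\lambda_1$ keeps the slab mass from being too small.

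For the sieve and entropy conditions I would take $\mathcal{F}_n$ to consist of pairs $(\bmu,\bl)$ with $K\le K_{\max}$, each column $\bmu_k$ having at most $O(s)$ coordinates exceeding a threshold and $\|\bmu\|_\infty$ polynomially bounded, with $\bl$ ranging over the finite set $\mathcal{L}_K$. Bounding $\Pi(\mathcal{F}_n^c)$ reduces to the same SSL spike-tail computation as in Theorem \ref{thm1}, now summed over the $K\le K_{\max}$ columns, producing the $\exp(-C_2 K_{\max}s\log p)$ factor, while the Poisson tail controls $K>K_{\max}$. The entropy splits as the covering number of the sparse mean matrices, of log-order $K_{\max}s\log p$ (a union over $\binom{p}{s}^{K_{\max}}$ supports, each an $O(K_{\max}s)$-dimensional bounded block covered at scale $\epsilon_n$), plus $\log|\mathcal{L}_{K_{\max}}|\le n\log K_{\max}$, giving $\log N(\epsilon_n,\mathcal{F}_n,\|\cdot\|_F)\lesssim K_{\max}s\log p + n\log K_{\max}=\epsilon_n^2$; translating a cover of $(\bmu,\bl)$ into a cover of the products $\bmu\bl^T$ uses the same Lipschitz estimate as Theorem \ref{thm1}. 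Sub-Gaussianity of the functionals $\langle\bE,\cdot\rangle_F$ then converts this entropy bound into exponentially powerful tests, closing the three conditions.

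The main obstacle I anticipate is the simultaneous bookkeeping of the factor-$K$ inflation in both the prior-mass lower bound and the sieve-complement upper bound: the same threshold and the same constraints on $(\lambda_0,\lambda_1,\beta_\theta)$ must make the slab mass large enough columnwise for (i) while keeping the aggregate spike tail over all $K_{\max}$ columns summable for (ii), and the constants $C_1<C_2$ must still separate after the blow-up. A secondary point requiring care is that replacing one joint support of size $s$ by $K^*$ possibly disjoint supports of size $s$ enlarges $\|\bmu^*\|_F$ and hence narrows the admissible range of $\lambda_1$; under the mild extra assumption $\|\bmu_k\|_2^2=O(s)$ noted in the remark after Theorem \ref{thm1}, this range stays nonempty, so the corollary follows by repeating the Theorem \ref{thm1} argument verbatim with $s$ replaced by $Ks$ in the sparsity-driven terms.
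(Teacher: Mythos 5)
Your proposal is correct and follows essentially the same route as the paper, which offers no separate argument for Corollary \ref{cor1} but treats it as Theorem \ref{thm1} rerun under the columnwise prior \eqref{prior2'} with the sparsity-driven terms inflated from $s$ to $Ks$ --- precisely the bookkeeping you carry out across the prior-mass, soft-support/sieve, and entropy steps, with the testing construction untouched. One micro-correction: $\Pi(K=K^*)$ is not bounded below by a constant when $K^*$ grows (the truncated Poisson gives only $\exp(-2K^*\log K^*)$), but since Assumption \ref{assump1} makes $K^*\log K^*\lesssim \log p\leq s\log p$, this term is absorbed and nothing in your argument breaks.
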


\begin{remark}
Corollary \ref{cor1} can be easily extended to the case when the mean vectors have not only different sparsity structures, but also distinct sparsity sizes, i.e., $|\text{supp}(\bmu_k)|\neq |\text{supp}(\bmu_{k'})|$ for some $k\neq k'$. In such a case, the first term of the posterior contraction rate becomes to $\sum_{k=1}^{K^*}|\text{supp}(\bmu_k)|K_{\max}\log p$.
\end{remark}


\subsection{Mis-clustering error}\label{ssec:clustering_error}


Recovering cluster memberships is always a focal objective for clustering problems. In this subsection, we obtain a mis-clustering error bound of the proposed Bayesian model \emph{a posteriori} based on the posterior contraction result for parameter estimation in Theorem \ref{thm1}.
For any two cluster membership vectors $\bz,\bz'\in([K])^n$, define the minimum Hamming distance
$    d_{H}(\bz,\bz')=(1/n)\inf_{\tau\in S_K}\sum_{i=1}^n\mathbbm{1}\{z_i\neq \tau(z_i')\}$
as the mis-clustering rate between $\bz$ and $\bz'$, where $S_K$ is the set of all permutations on $[K]$. Let $\sigma_{\max}(\bX)$ and $\sigma_{\min}(\bX)$ denote the largest and smallest 
 
non-zero 

singular value of matrix $\bX$, respectively. Below, we obtain the posterior contraction result for the mis-clustering error measured by $d_{H}$.

\begin{theorem}\label{thm:cluster_accuracy}
Assume the conditions in Theorem \ref{thm1} hold and $n_k^*=\sum_{i=1}^n\mathbbm{1}(z_i^*=k)\to\infty$ for all $k$. Let $n_{\min}^* \overset{\Delta}{=}\min_{k\in [K]}n^*_k$ and $n_{\max}^*\overset{\Delta} = \max_{k\in [K]}n_k^*$.
Then we have 
\begin{align*}
    \Pi\left\{nd_{H}(\bz,\bz^*)\geq\frac{M(n_{\max}^*)^3\sigma_{\max}(\bmu^*)^2}{(n_{\min}^*)^4\sigma_{\min}(\bmu^*)^4}(s\log p+n\log K_{\max})\bigg\rvert\by\right\}\to 0
\end{align*}
in $\mathbb{P}_{(\bmu^*, \bl^*, \bSigma^*)}$- probability for every large constant $M$.
\end{theorem}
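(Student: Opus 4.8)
The plan is to transfer the posterior contraction bound for the mean matrix from Theorem \ref{thm1} to the Hamming misclustering rate by comparing the right singular subspaces of $\bmu\bl^{T}$ and $(\bmu^*)(\bl^*)^{T}$. The first step is to note that the misclustering rate is itself a membership-matrix distance, namely $n\,d_H(\bz,\bz^*)=\tfrac12\inf_{\tau\in S_K}\|\bl-\bl^*\bP_\tau\|_F^2$ (after aligning the two label sets), because the rows of $\bl$ and $\bl^*$ are unit indicator vectors and each disagreeing label contributes exactly $2$ to the squared Frobenius norm. Writing $\bD=\mathrm{diag}(\sqrt{n_1},\dots,\sqrt{n_K})$ and $\bD^*=\mathrm{diag}(\sqrt{n_1^*},\dots,\sqrt{n_{K}^*})$ for the posterior and true cluster sizes, the columns of $\bl,\bl^*$ are orthogonal, so $\bV=\bl\bD^{-1}$ and $\bV^*=\bl^*(\bD^*)^{-1}$ have orthonormal columns and span the right singular subspaces of $\bmu\bl^{T}$ and $(\bmu^*)(\bl^*)^{T}$. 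Each row of $\bV^*$ equals $(n^*_{z_i^*})^{-1/2}\bm{e}_{z_i^*}^{T}$, so rows from distinct true clusters are orthogonal with squared separation at least $2/n_{\max}^*$; this is the quantitative separation that will drive the conversion in the final step.

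Second, I would control the subspace discrepancy. By the Davis--Kahan/Wedin perturbation theorem, in the form used for matrix perturbation arguments in \citep{10.1093/biomet/asv008}, the sine-theta distance obeys $\|\sin\Theta(\bV,\bV^*)\|_F\lesssim \|\bmu\bl^{T}-(\bmu^*)(\bl^*)^{T}\|_F/\sigma_{\min}((\bmu^*)(\bl^*)^{T})$, and hence there is an orthogonal $\bO$ with $\|\bV\bO-\bV^*\|_F\lesssim\|\sin\Theta(\bV,\bV^*)\|_F$. Because $\bV^*$ is orthonormal, the nonzero singular values of the signal coincide with those of $\bmu^*\bD^*$, giving $\sigma_{\min}((\bmu^*)(\bl^*)^{T})\geq\sigma_{\min}(\bmu^*)\sqrt{n_{\min}^*}$ and $\sigma_{\max}((\bmu^*)(\bl^*)^{T})\leq\sigma_{\max}(\bmu^*)\sqrt{n_{\max}^*}$. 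Assumption \ref{assump3} and $n_k^*\to\infty$ guarantee these quantities are nondegenerate, and this step is where $\sigma_{\min}(\bmu^*)$ and $n_{\min}^*$ first enter the denominator.

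Third, I would convert the subspace distance back into a count of misclustered indices, following the eigenvector/$K$-means argument for clustered spectral embeddings used in the spectral clustering literature \citep{loffler2020optimality}. Since the rows of $\bV\bO$ take only $K$ distinct values, matching the posterior clusters to the true clusters by the minimizing permutation $\tau$ shows that each misclustered index forces its embedded row away from its true center by a fixed fraction of the separation, so that $n\,d_H(\bz,\bz^*)$ is bounded by $\|\bV\bO-\bV^*\|_F^2$ multiplied by a factor that rescales the orthonormal embedding back to the unnormalized membership matrix and realigns the posterior and true clusters. I expect this conversion to be the main obstacle: one must keep the dependence on the \emph{random} posterior cluster sizes $n_k$ explicit, uniformly over $\tau$, and rule out degenerate posterior clusterings with vanishing $n_k$; it is precisely the mismatch between $n_k$ and $n_k^*$ and the condition number $\sigma_{\max}(\bmu^*)/\sigma_{\min}(\bmu^*)$ of the center matrix, absorbed in this rescaling, that generate the remaining powers of $n_{\max}^*$, $n_{\min}^*$, and $\sigma_{\min}(\bmu^*)$ in the stated bound.

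Finally, I would assemble the estimates. Chaining the three steps yields, on the event
\begin{align*}
\mathcal{A}=\left\{\|\bmu\bl^{T}-(\bmu^*)(\bl^*)^{T}\|_F^2\leq M(s\log p+n\log K_{\max})\right\},
\end{align*}
the deterministic inequality $n\,d_H(\bz,\bz^*)\lesssim \frac{(n_{\max}^*)^3\sigma_{\max}(\bmu^*)^2}{(n_{\min}^*)^4\sigma_{\min}(\bmu^*)^4}(s\log p+n\log K_{\max})$. Theorem \ref{thm1} states that $\Pi(\mathcal{A}^c\mid\by)\to0$ in $\mathbb{P}_{(\bmu^*)(\bl^*)^{T}}$-probability, so the posterior mass placed outside the stated misclustering ball is at most $\Pi(\mathcal{A}^c\mid\by)\to0$, which is exactly the claim.
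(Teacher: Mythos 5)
Your architecture matches the paper's proof almost exactly: orthonormalize the membership matrices ($\bV=\bl\bD^{-1}$ is the paper's $\bl_N=\bl\Sigma_L^{-1/2}$), transfer the Frobenius contraction of Theorem \ref{thm1} to a sine-theta distance between right singular subspaces, and convert that subspace distance into a misclustering count via the row separation $\zeta\asymp 1/\sqrt{n_{\max}^*}$ of $\bl_N^*$, all on the posterior event $\mathcal{A}$. However, there are two genuine gaps. First, your Davis--Kahan step is stated as $\|\sin\Theta(\bV,\bV^*)\|_F\lesssim\|\bmu\bl^T-(\bmu^*)(\bl^*)^T\|_F/\sigma_{\min}((\bmu^*)(\bl^*)^T)$. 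A Wedin-type bound with only the true matrix's smallest singular value in the denominator is not available here: the classical version needs a singular-value gap involving the \emph{perturbed} matrix $\bmu\bl^T$, whose $K$-th singular value cannot be lower bounded uniformly over the posterior support (posterior centers may nearly coincide, and $K$ may even differ from $K^*$). The paper uses Theorem 3 of \cite{yu2014} precisely to avoid this, paying the price of the bound $2\sqrt{2}\,(2\sigma_{\max}+\|\bmu\bl^T-(\bmu^*)(\bl^*)^T\|_2)\,\|\bmu\bl^T-(\bmu^*)(\bl^*)^T\|_F/\sigma_{\min}^2$, which depends only on $(\bmu^*)(\bl^*)^T$. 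This is where the factors $\sigma_{\max}(\bmu^*)^2/\sigma_{\min}(\bmu^*)^4$ and the powers of $n_{\max}^*$, $n_{\min}^*$ actually come from (via $\sigma_{\min}((\bmu^*)(\bl^*)^T)\geq\sqrt{n_{\min}^*}\,\sigma_{\min}(\bmu^*)$ and $\sigma_{\max}((\bmu^*)(\bl^*)^T)\leq\sqrt{n_{\max}^*}\,\sigma_{\max}(\bmu^*)$ squared/fourth-powered through $\eta_n^2/\zeta^2$) --- not, as you conjecture, from a rescaling hidden in the subspace-to-labels conversion.

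Second, the conversion step that you flag as ``the main obstacle'' is indeed left unresolved in your plan, and the per-point displacement argument you sketch (each misclustered index moves its embedded row by a fixed fraction of the separation) would require lower bounds on the \emph{random} posterior cluster sizes $n_k$, which you correctly note you cannot rule out degenerating. The paper sidesteps this entirely: with $\bO=\arg\inf_{\bV\in\mathbb{O}^K}\|\bl_N-\bl_N^*\bV\|_F$ it defines $\mathcal{I}=\{i:\|(l_N)_i-\bO^T(l_N)_i^*\|_2\geq\zeta/2\}$, gets $|\mathcal{I}|\leq 4\eta_n^2/\zeta^2$ by Markov-type counting, and then a pigeonhole argument shows that \emph{every} index outside $\mathcal{I}$ is clustered exactly correctly (the $K^*$ disjoint balls $B_{\|\cdot\|_2}(\bO^T(l_N)_i^*,\zeta/2)$ each contain exactly one distinct row of $\bl_N$, using $n_k^*\to\infty$ so that $n_k^*>|\mathcal{I}|$ for all $k$). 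Hence the misclustered count is at most $|\mathcal{I}|$, with no control of posterior $n_k$ ever needed. Supplying the Yu--Wang--Samworth variant in step two and replacing your displacement argument by this $\mathcal{I}$-plus-pigeonhole counting would close both gaps and recover exactly the stated exponents.
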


The main challenge in proving Theorem \ref{thm:cluster_accuracy} is translating clustering accuracy into errors regarding the right singular subspace of $\bmu\bl^T$. By leveraging a variant of the Davis-Kahan Theorem, we bound the distance of the right singular subspace of $\bmu\bl^T$ using the distance of $\bmu\bl^T$. Through geometric analysis, we establish that correct clustering is achievable if the errors of corresponding right singular vectors are sufficiently small. This allows us to control mis-clustering error by the error of $\bmu\bl^T$, and the desired result follows by directly applying Theorem \ref{thm1}.

\begin{remark}
If we assume $(s\log p)/(n\|\bmu^*\|_F^2)\to 0$, then by Theorem \ref{thm:cluster_accuracy}, the proportion of the mis-clustered data points is asymptotically negligible with a high posterior probability provided that 
$
(n_{\max}^*)^3\sigma_{\max}(\bmu^*)^{4} = O({(n_{\min}^*)^4\sigma_{\min}(\bmu^*)^4})
$
as $n\to\infty$. Moreover, if we further assume that $\sigma_{\max}(\bmu^*)\lesssim\sigma_{\min}(\bmu^*)$ 
and $n_{\max}^*\lesssim n_{\min}^*=O(n)$ (which means that the sizes of the smallest cluster and the largest cluster are of the same order as $n$), then the number of mis-clustered data points, i.e., $nd_{H}(\bz,\bz^*)$, is asymptotically bounded by a constant with a high posterior probability because $\|\bmu^*\|_F^2\asymp\sigma_{\min}(\bmu^*)^2$ in this case. 
\end{remark}

\begin{remark}\label{remark:clustering_accuracy}
%
%
\citet{azizyan2013} and \citet{caichime2019} also studied high-dimensional clustering with the sparsity assumption. However, they only considered the case when the number of clusters was 2. 
Assuming that $\bmu^*_1 - \bmu^*_2$ was sparse, \citet{azizyan2013} showed that the minimax optimal convergence rate of mis-clustering was $\sqrt{s\log p/n}/\Delta^2$ when the two clusters had same mixing weights and isotropic covariance matrices. 
Assuming that the discriminant direction vector $\beta^* = (\bm{\Sigma}^*)^{-1}(\bmu_1^*-\bmu^*_2)$ was sparse, 
\citet{caichime2019} showed that the convergence rate of the excess mis-clustering error
, defined as the difference between the mis-clustering error and the optimal mis-classification error obtained by Fisher's linear discriminant rule when cluster-specific parameters were known,
achieved the minimax optimal rate of $s\log p / n$. 
However, the convergence rate of mis-clustering error was not investigated. 
In addition, \cite{li2017minimax}, \citet{azizyan2013} and \citet{caichime2019} focused on the two-cluster problem, but the minimax optimal result for high-dimensional sparse clustering with $K>2$ clusters was not studied.  In contrast, we allow $K$ to
grow moderately with the sample size $n$. 


\end{remark}

\section{Simulation Studies}\label{sec:simu}
We evaluate the empirical performance of the proposed Bayesian method for sparse Gaussian mixtures through analyses of synthetic data sets. Posterior inference is carried out through an MCMC sampler, the details of which are provided in Appendix \ref{sec:sampling}. 
We also compare the performance of our model with four competitors: 
principal component analysis K-means (PCA-KM), sparse K-means (SKM) \citep{witten2010}, Clustering of High-dimensional Gaussian Mixtures with the EM (CHIME), and Gaussian-mixture-model-based clustering (MClust) \citep{Fraley2012mclustV4}.
In particular, PCA-KM is a two-stage approach that first performs a PCA to reduce dimensionality and then applies a K-means algorithm to the principal components. SKM is a generalization of the K-means in high dimensions to find clusters and important features (i.e., the non-zero coordinates) simultaneously. 
CHIME is a high-dimensional clustering approach based on an EM algorithm. To overcome the issue that the sample covariance matrix may not be invertible and thus the subsequent estimate of $\bm{z}$ is not available, CHIME focuses on the so-called sparse discriminant vector and directly use it in the Fisher discriminant rule to estimate cluster memberships.
Note that the performance of CHIME is quite sensitive to the choice of initial values.
Throughout simulation examples in this section, we set the initial values of CHIME to be the output of K-means. For PCA-KM and SKM, we choose the number of clusters via Silhouette method \citep{ROUSSEEUW198753}, with the range of $K$ being from 2 to 10. For MClust and CHIME, the number of clusters is estimated via Bayesian information criterion (BIC). 

\subsection{Simulation setup}\label{ssec:simu_setup}
   
We consider three simulation scenarios.  Scenario I is designed to evaluate the proposed Bayesian method in terms of clustering accuracy with varying numbers of clusters and support sizes of the mean vectors.
   The data matrix $\by$ is of size $p\times n$ with $p = 400$ and $n = 200$.  We assume that the true number of clusters $K^*$ ranges over $\{3, 5\}$ and the support size $s$ ranges over \{6,12\}.
   We use $\mathcal{S}$ to denote the set of non-zero coordinates and let the first $s$ coordinates of the cluster means be non-zero, i.e., $\mathcal{S}=\{1, 2, \dots, s\}$.
   For each $K^*\in\{3,5\}$,
   the true cluster assignment $z_i^*$ is generated from a categorical distribution: $z^*_i\sim \mathrm{Cat}(\bp_{K^*})$, 
   where 
   $\bp_3=(0.3,0.3,0.4)$ and $\bp_5=(0.2,0.2,0.2,0.2,0.2)$.
   When $K^*=3$, the three cluster mean vectors are $(\bmu_1^*)_\mathcal{S} = 3\times(1,1,\dots,1)^T$, $(\bmu_2^*)_\mathcal{S}=-1.5\times(1,1,\dots,1)^T$ and $(\bmu_3^*)_\mathcal{S} = (0,\dots,0)^T$, where  $(\bmu_1^*)_{\mathcal{S}}, (\bmu_2^*)_{\mathcal{S}}, (\bmu_3^*)_{\mathcal{S}}\in\mathbb{R}^s$. For $K^* = 5$, the five cluster mean vectors are $(\bmu_1^*)_\mathcal{S} = 4\times (1,1,\dots,1)^T$, $(\bmu_2^*)_\mathcal{S}=-4\times(1,1,\dots,1)^T$, $(\bmu_3^*)_\mathcal{S}=(0,\dots,0)^T$, $(\bmu_4^*)_\mathcal{S}=4\times(-1,1,-1,1,\dots,-1,1)^T$ and $(\bmu_5^*)_\mathcal{S}=1.5\times (1,-1,1,-1,\dots,1,-1)^T$. 
    Given $K^*$ and $z_i^*$'s, data are generated from $\by_i\sim N(\bmu^*_{z_i^*}, \bI_p)$.

  Scenario II focuses on the case when small clusters exist.
  The data matrix $\by$ consists of $n = 200$ observations  with dimension $p = 400$. 
  We assume that the true number of clusters $K^*=3$ and 
  the support size  $s=8$.
  Similarly as Scenario I, we set $\mathcal{S}=\{1, 2, \dots, s\}$.  
  The mean vectors over the support $\mathcal{S}$ in the three clusters are $(\bmu_1^*)_{\mathcal{S}}=(5,2,\cdots,5,2)^T$, 
    $(\bmu_2^*)_{\mathcal{S}}=(10,5,\cdots,10,5)^T$, and
    $(\bmu_3^*)_{\mathcal{S}}=(15,2,\cdots,15,2)^T$, respectively.  
 For each observation $i$, its simulated true cluster assignment $z^*_i$ is generated from a categorical distribution independently: $z^*_i\sim \mathrm{Cat}(0.02,0.48,0.5)$.  
  Given $K^*$ and $z_i^*$'s, data are generated from $\by_i\sim N(\bmu^*_{z_i^*}, \bm{\Sigma}_{z_i^*})$, where $\bm{\Sigma}_1 = \bm{\Sigma}_3 = \bI_p$, and $\bm{\Sigma}_2$ is a diagonal matrix whose diagonal entries equal 4 on the coordinates in the support $\mathcal{S}$ and 1 elsewhere.  

  Scenario III aims to investigate the robustness of the proposed Bayesian method to the misspecification of the sampling distribution. 
  The true distribution of the data is assumed to be  a  mixture of multivariate $t$ distributions but we use the Gaussian mixtures as the working likelihood. The data matrix $\by$ consists of $n = 200$ observations of multivariate $t$-mixtures with  dimension $p = 400$ and a degree of freedom $5$. The number of clusters is set to $K^* = 3$,
  and the first $s=8$ coordinates of cluster mean vectors are non-zero. We generate cluster assignments $z^*_i$'s from a categorical distribution: $z^*_i\sim \text{Cat}(0.2,0.4,0.4)$ independently for $i\in [n]$, and let the cluster mean vectors $\bmu^*_1, \bmu^*_2, \bmu^*_3$ and the covariance matrices $\bm{\Sigma}_1,\bm{\Sigma}_2,\bm{\Sigma}_3$ be the same as those in Scenario II for each multivariate $t$-cluster. 


For each of the three scenarios, 
we apply the proposed Bayesian sparse Gaussian mixture model to the simulated data with 100 repeated simulations.
In each simulation, we compute posterior inference using the developed MCMC sampler with $1000$ burn-in iterations and another $4000$ iterations for post-burn-in samples. 
The upper bound of the number of clusters is set to be $K_{\max} = 20$. We set the hyperparameters $\kappa$, $\lambda_0$, and $\lambda_1$ in the  spike-and-slab LASSO prior to be $0.1$, $100$, and $1$ respectively, and $\lambda$ in the truncated Poisson prior for $K$ to be 2.
The estimated number of clusters and cluster assignments under the proposed Bayesian method are reported based on the posterior mode of $z_i$'s from post-burn-in MCMC samples. 
 
The proposed Bayesian method, Mclust, PCA-KM, and SKM are performed under R with version 4.2.1 and CHIME is performed under Matlab with version 9.11 (R2021b).
 

\subsection{Simulation results}
\label{sub:simulation_results}

We first investigate the performance of the proposed Bayesian sparse Gaussian mixture model against the four competitors in Scenario I. We focus on the following three objectives: identification of the number of clusters, the clustering accuracy, and the cluster-wise mean vector estimation accuracy. 
The proposed Bayesian method can successfully recover the simulated true number of clusters. Specifically, when $K^*=3$, the proposed method identifies 3 clusters in 85 replicates out of 100 replicates for $s=6$ and in 98 replicates for $s=12$; when $K^*=5$, the proposed method identifies 5 clusters in 83 replicates out of 100 replicates for $s=6$ and in 98 replicates for $s=12$. 
In contrast, all the four competitors underestimate the number of clusters. In particular, when $K^*=3$, the estimated number of clusters using the four competitors all equal to 2 in 100 simulation replicates. When $K^*=5$, PCA-KM, SKM, MClust, and CHIME only correctly estimate the number of clusters in 6, 0, 4, and 3 out of 100 replicates for $s=6$, and in 8, 0, 9, and 3 out of 100 replicates for $s=12$.
Figure \ref{fig:clustering_k3s6} and Appendix Figure \ref{fig:clustering_k5s6} plot the simulated true cluster memberships and the estimated clustering results under the proposed Bayesian method and the four competitors for one randomly selected simulation replicate when $K^*=3$, $s=6$, and $K^*=5$, $s=6$, respectively. 
We can see that the four competitors cannot well distinguish clusters with a certain degree of overlapping, e.g., the green and blue clusters in the upper left panel of Figure  \ref{fig:clustering_k3s6}, while the proposed Bayesian method can successfully separate them.   
 \begin{figure}[htp]
    \centering
    \includegraphics[width=.75\textwidth]{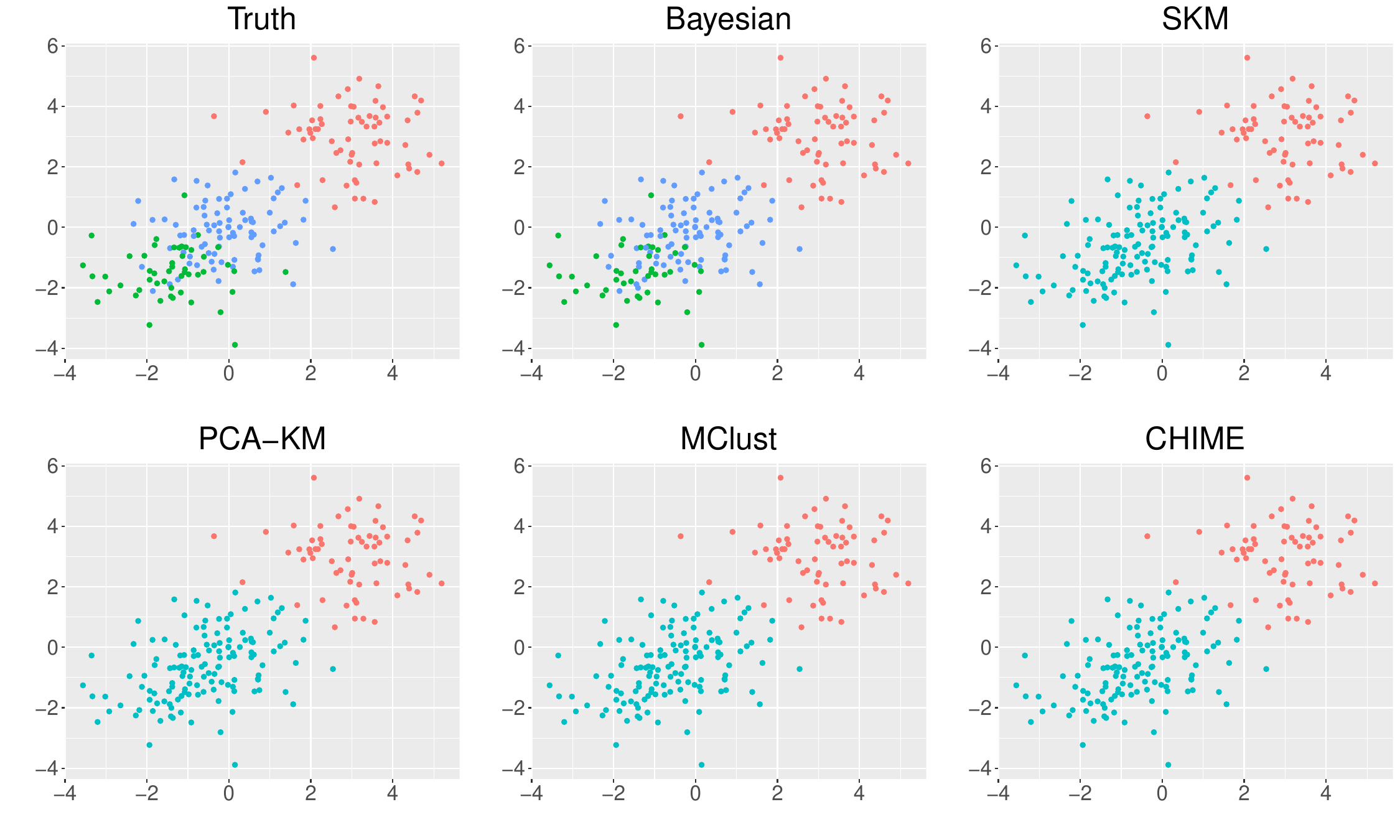}
    \caption{Clustering results of different methods compared to the true cluster memberships in Scenario I with $K^*=3$ and $s=6$ in a randomly selected simulation replicate. Data points are projected onto the subspace of the first two coordinates and different colors correspond to different estimated cluster memberships.}
    \label{fig:clustering_k3s6}
\end{figure}

In terms of clustering accuracy, we use the adjusted Rand index (ARI) \citep{rand1971} as the evaluation metric.
Specifically, let $\mathcal{C}^*$ and $\mathcal{C}^\dagger$ denote the true and estimated partition of $[n]$, respectively, and let $K^*\overset{\Delta}{=}|\mathcal{C}^*|$, $K^\dagger \overset{\Delta}{=} |\mathcal{C}^\dagger|$.
For each cluster, denote $n^*_k$ the size of $k$-th cluster in $\mathcal{C}^*$ and $n^\dagger_{k'}$ as the size of $k'$-th cluster in $\mathcal{C}^\dagger$. Furthermore, let $n_{k,k'}$ be the number of observations that are assigned to both the $k$-th cluster in $\mathcal{C}^*$ and $k'$-th cluster in $\mathcal{C}^\dagger$. Then the ARI is defined as 
$$\mathrm{ARI} (\mathcal{C}^*,\mathcal{C}^\dagger)=\frac{\sum_{k=1}^{K^*}\sum_{k'=1}^{K^\dagger}{n_{k,k'}\choose 2}-\sum_{k=1}^{K^*}{n_k\choose 2}\sum_{k'=1}^{K^\dagger}{n_{k'}\choose 2}/{n\choose 2}}{\left(\sum_{k=1}^{K_t}{n_k\choose 2}+\sum_{k'=1}^{K^\dagger}{n_{k'}\choose 2}\right)/2-\sum_{k=1}^{K^*}{n_k\choose 2}\sum_{k'=1}^{K^\dagger}{n_{k'}\choose 2}/{n\choose 2}}.$$
Table \ref{tab:simu} reports the average ARIs of the clustering results of the proposed Bayesian method against the four competitors across 100 simulation replicates under Scenario I. 
The proposed Bayesian method outperforms the four alternatives in terms of higher ARIs in all settings. 

\begin{table}[tbp]
    \centering
    \begin{tabular}{c|c c c c}
         \hline
         \hline
         & \multicolumn{2}{c}{$K^*=3$} & \multicolumn{2}{c}{$K^*=5$} \\
         & $s=6$ & $s=12$ & $s=6$ & $s=12$\\
        \hline Bayesian & 0.84 (0.19) & 0.98 (0.01) & 0.94 (0.03) & 0.99 (0.01) \\
        \hline PCA-KM & 0.54 (0.04) & 0.55 (0.04) & 0.64 (0.17) & 0.60 (0.18)\\
        \hline MClust & 0.54 (0.04) & 0.55 (0.04) & 0.81 (0.13) & 0.78 (0.05) \\
        \hline SKM & 0.55 (0.04) & 0.55 (0.04) & 0.54 (0.21) & 0.74 (0.13)\\
        \hline CHIME & 0.53 (0.10) & 0.63 (0.18) & 0.52 (0.27)  & 0.54 (0.29) \\
        \hline
        \hline
    \end{tabular}
    \caption{Average (standard deviation) adjusted Rand indices (ARIs) in Scenario I with different choice of the number of clusters $K^*$ and support size $s$.}
    \label{tab:simu}
\end{table}

We then examine the cluster-wise mean vector estimation accuracy by computing 
$\|\hat{\bmu}_1-\bmu^*_1\|_2$ under the proposed Bayesian method and alternatives, where $\hat{\bmu}_1$ is the estimated mean vector under different methods. 
Specifically, $\hat{\bmu}_1$ under the proposed Bayesian method is the posterior mean of $\bmu_1$.  CHIME and MClust directly return $\hat{\bmu}_1$ since they are model-based methods. For PCA-KM and SKM, we use the empirical means induced from their estimated clustering memberships as $\hat{\bmu}_1$ since they are based on K-means.
Figure \ref{fig:sparsity_k3s12_error} presents the boxplots of $\|\hat{\bmu}_1-\bmu^*_1\|_2$ when $K^*=3$ and $s=12$ across 100 simulation replicates under different methods, showing that the proposed Bayesian method yields the smallest error of estimating $\bmu^*_1$.
Furthermore, Appendix Figure \ref{fig:sparsity_k3s12} plots the estimated $\bmu_1^*$ under different methods in one randomly selected simulation replicate, indicating that the proposed Bayesian method recovers the sparsity pattern better than the four competitors.  Lastly, we report the running times of all methods in Scenario I in Appendix Table \ref{tab:running_time}. 
 
\begin{figure}[tbp]
    \centering
    \includegraphics[width=.75\textwidth]{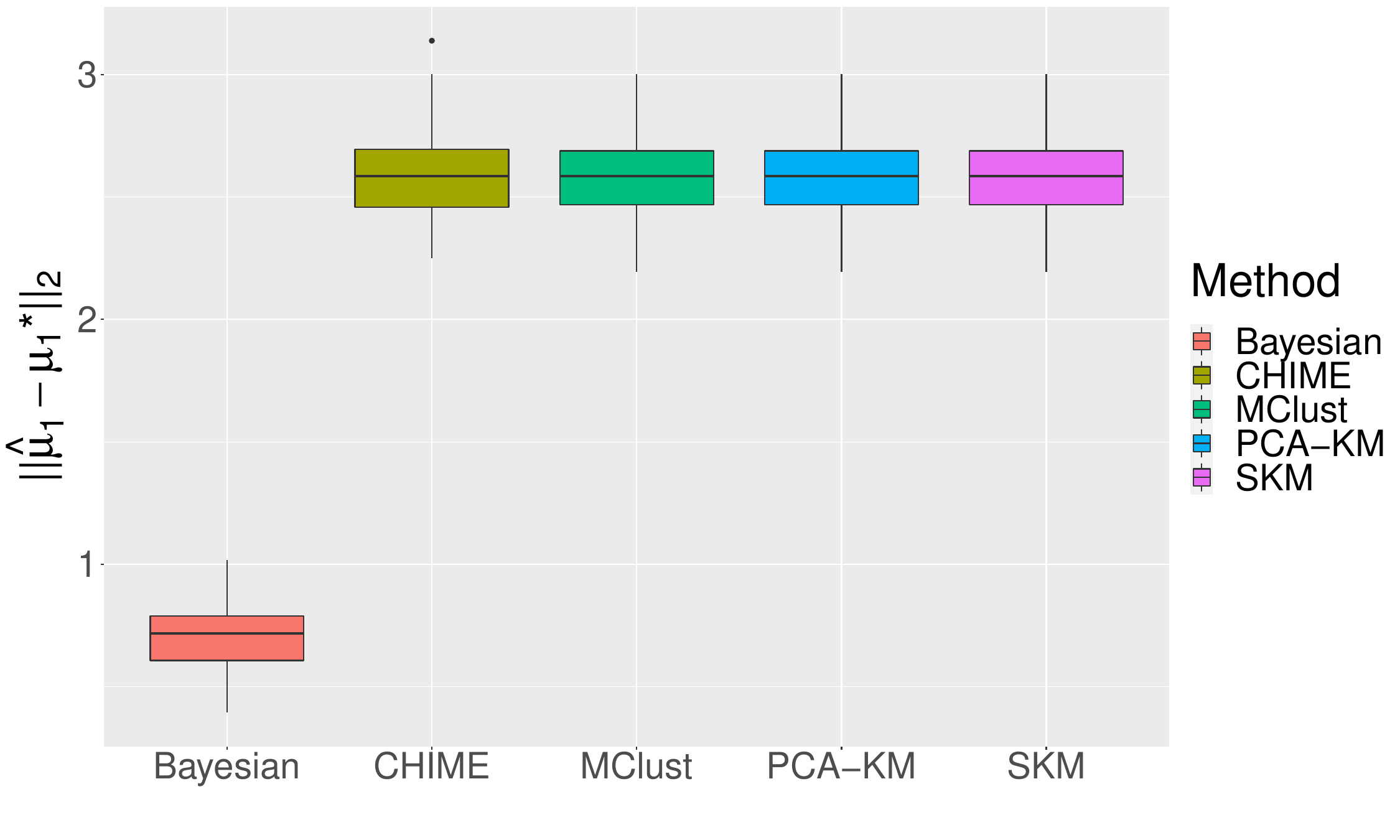}
    \caption{Numerical results of $\|\hat{\bmu}_1-\bmu^*_1\|_2$ of different methods in Scenario I with $K^*=3$ and $s=12$ across 100 simulation replicates.}
    \label{fig:sparsity_k3s12_error}
\end{figure}

Scenario II is designed to evaluate the proposed Bayesian method when small clusters exist. 
\begin{figure}[tbp]
    \centering
    \includegraphics[width=.76\textwidth]{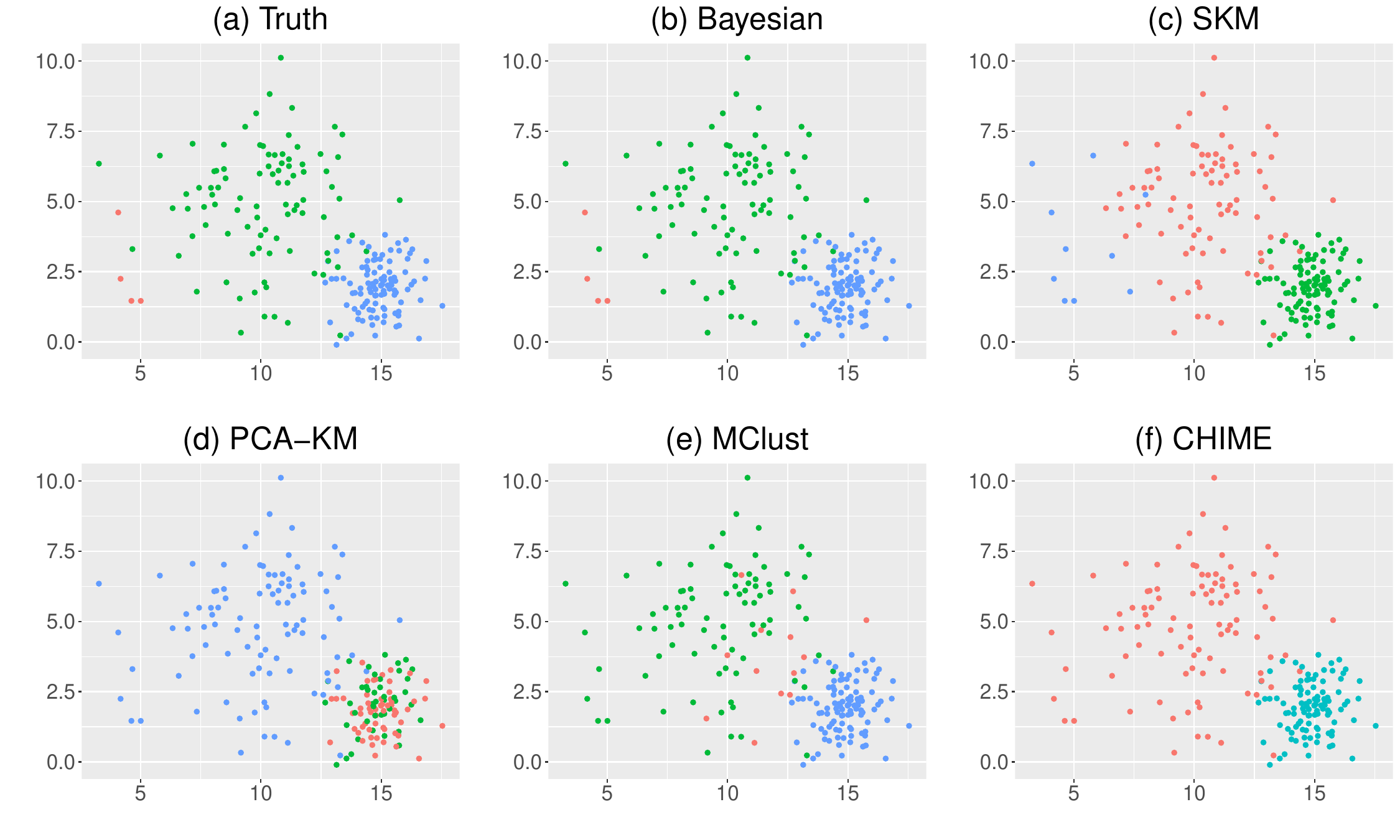}
    \caption{Clustering results of different methods compared with the truth in one randomly selected simulation replicate of Scenario II. (a) Simulated truth. (b) Clustering result under the proposed Bayesian method. (c-f) Clustering results under SKM, PCA-KM, MClust, and CHIME when the number of clusters is fixed to be truth. Data points are projected onto the subspace of the first two coordinates and different colors correspond to different estimated cluster memberships.
    }
    \label{fig:small_cluster_true_k}
\end{figure}
Figure \ref{fig:small_cluster_true_k}(a) shows the true clustering assignments in one randomly selected replicate, in which the small cluster (in red color) only contains four data points.
Our Bayesian method successfully discovers the small cluster and yields the estimated number of clusters $K=3$ in 96 out of 100 simulation replicates, resulting in an average ARI of 0.99. Figure \ref{fig:small_cluster_true_k}(b) shows the estimated clustering memberships under the proposed Bayesian method in the same simulation replicate, exactly matching the truth shown in Figure \ref{fig:small_cluster_true_k}(a). 
In contrast, all four competitors are not able to identify the small cluster and report the estimated number of clusters $K=2$ in all 100 simulation replicates. 
We further examine the performance of the four competitors when we pre-specify the number of clusters to be the truth $K=3$. Figure \ref{fig:small_cluster_true_k}(c) - (f) plot the estimated clustering memberships obtained from the four competitors.
We can see that SKM, PCA-KM, and MClust incline to return clusters with relatively balance sizes, leading to inaccurate clustering assignments with the average ARIs being 0.76, 0.84, and 0.79, respectively, across 100 simulation replicates. CHIME only returns two clusters even though we set the number of clusters to be 3, as shown in Figure \ref{fig:small_cluster_true_k}(f).

For Scenario III, the histograms of the estimated number of clusters under different methods are presented in Appendix Figure \ref{fig:hist_scen3}. 
Figure \ref{fig:mixture_t} visualizes the clustering results under different methods in a randomly selected simulation replicate.
The proposed Bayesian method successfully identifies three clusters in 76 out of 100 simulation replicates, with an average ARI of 0.97 across 100 simulation replicates.
Note that the multivariate $t$-distribution is heavy-tailed. Since we mis-specify the working model as Gaussian mixtures, it is reasonable to treat some observations as ``outliers", as shown in the upper left panel of Figure \ref{fig:mixture_t}.
Therefore, the proposed Bayesian method tends to assign these ``outliers" to small clusters when it overestimates the number of clusters. 
PCA-KM returns 10 clusters in 59 out of 100 simulation replicates with the average ARI being 0.84. 
The estimated numbers of clusters of SKM are all 2 in 100 simulation replicates, and the average ARI is 0.52. For model-based methods, i.e., MClust and CHIME, which also utilize Gaussian mixtures as the working likelihood, their performance are much worse than others as they only identify one cluster in 93 out of 100 replicates, resulting in the average ARIs less than 0.05.
\begin{figure}[htp]
    \centering
    \includegraphics[width = .76\textwidth]{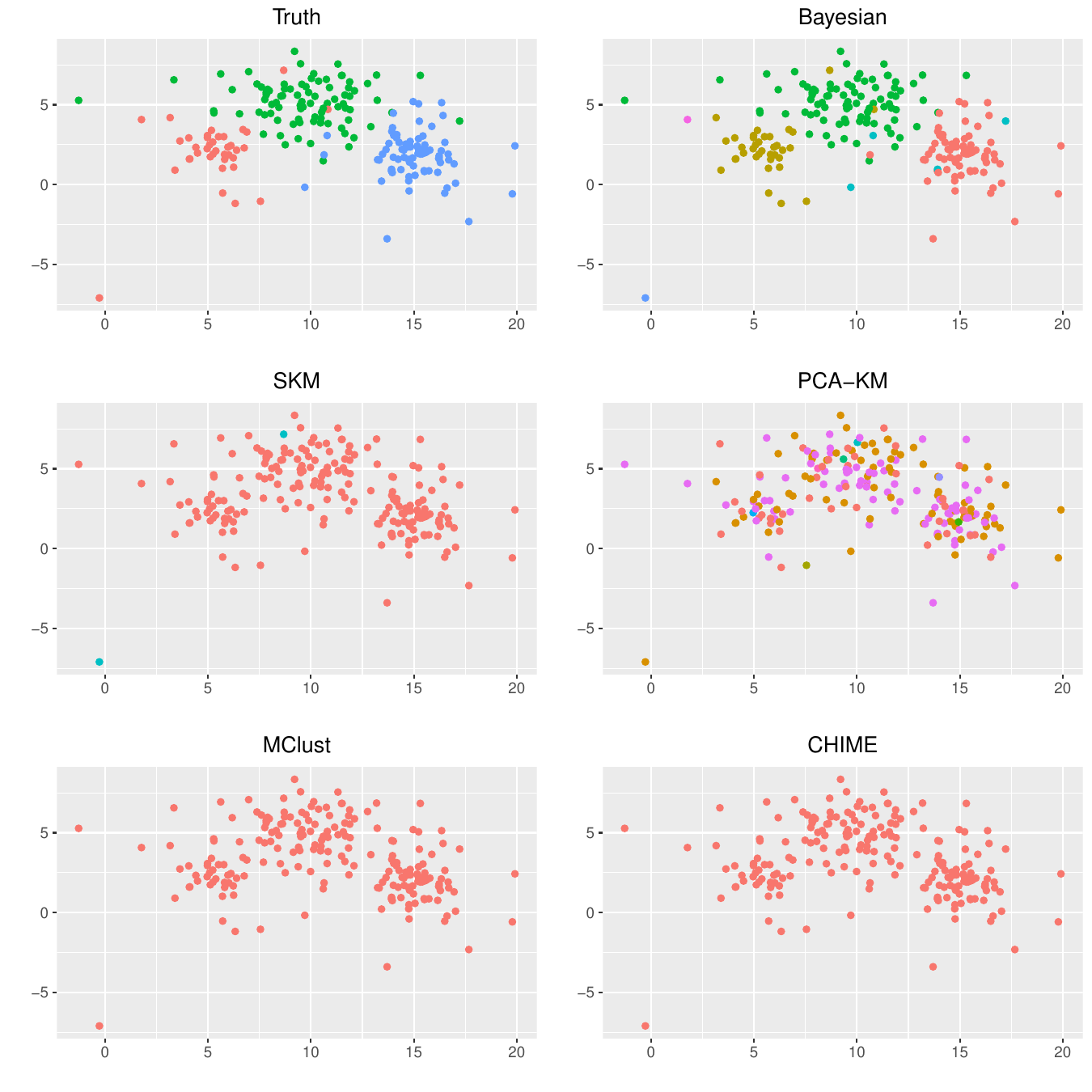}
    \caption{Clustering results under different methods   in one randomly selected replicate of Scenario III. Data points are projected onto the subspace of the first two coordinates and different colors correspond to different estimated cluster memberships.}
    \label{fig:mixture_t}
\end{figure}

\section{Single-cell Sequencing Data Analysis} 
\label{sec:real}

Recent advances in high-throughput single-cell RNA sequencing (scRNA-Seq) technologies greatly enhance our understanding of cell-to-cell heterogeneity and cell lineages trajectories in development \citep{cao2019single}. One important goal of analyzing scRNA-Seq data is to cluster cells to identify cell subpopulations with different functions and gene  expression patterns. The large number of detected genes in scRNA-Seq data makes the task of clustering cells a high-dimensional problem. In this section, we evaluate the proposed Bayesian sparse Gaussian mixture model using a benchmark scRNA-Seq data set \citep{darmanis2015survey}, which is available at the data repository Gene Expression Omnibus (GSE67835, \url{https://www.ncbi.nlm.nih.gov/geo/query/acc.cgi?acc=GSE67835}). 
After excluding hybrid cells and filtering out lowly expressed genes (i.e., the total number of RNA-Seq counts over all non-hybrid cells is less than or equal to 10), we have $p=18568$ genes and $n=420$ cells in $8$ cell types including fetal
quiescent cells (110 cells), fetal replicating cells (25
cells), astrocytes cells (62 cells), neuron cells (131
cells), endothelial (20 cells), oligodendrocyte cells (38 cells), microglia cells (16 cells), and OPCs (16
cells). The original count data $y_{ji}$ for gene $j$ in cell $i$ is transformed into continuous type by taking base-2 logarithm and pseudo count 1, i.e., $\log_2 (y_{ji}+1)$. Then we divide each $y_{ji}$ by the total expression of each cell, i.e., $y_{ji}/\sum_{j=1}^py_{ji}$. Lastly we normalize the data such that the standardized expression levels have zero mean and unit variance for each gene. 

We apply the proposed Bayesian method to the scRNA-Seq data with
the same hyperparameters as in the simulation study. We run the MCMC sampler for 10000 iterations and  discard the first 5000 iterations as burn-in. 
For comparison, we implement several alternatives, including K-means (KM), PCA-KM, MClust, SKM, K-means after non-negative matrix factorization (NMF-KM) \citep{zhu2017detecting}, and K-means after t-distributed stochastic neighbor embedding algorithm (tSNE-KM) \citep{linderman2019fast}.  For PCA-KM and NMF-KM, we first project the data onto the top 10-dimensional feature space, then apply the KM algorithm to cluster the cells. For the KM-based method, the optimal number of clusters is determined by the Silhouette method. 
To measure the performance of clustering results, we use not only the aforementioned ARI but another commonly-used criteria in the single-cell literature: normalized mutual information (NMI) \citep{ghosh2011cluster}. Formally, with the same notations as in Section \ref{sub:simulation_results}, the NMI is defined as
$$\text{NMI}(\mathcal{C}^*,\mathcal{C}^\dagger)=\frac{\sum_{k=1}^{K^*}\sum_{k'=1}^{K^\dagger}\frac{n_{k,k'}}{n}\log\left(\frac{n_{k,k'}}{n}\right)-\sum_{k=1}^{K^*}\frac{n_k^*}{n}-\sum_{k'=1}^{K^\dagger}\frac{n_{k'}^\dagger}{n}\log\left(\frac{n_{k'}^\dagger}{n}\right)}{\sqrt{\sum_{k=1}^{K^*}\frac{n_k^*}{n}\log\left(\frac{n_k^*}{n}\right)\sum_{k=1}^{K^\dagger}\frac{n_{k'}^\dagger}{n}\log\left(\frac{n_{k'}^\dagger}{n}\right)}}.$$
Table \ref{tab:real_data_est_k} reports the ARIs and NMIs under the proposed method and alternatives, showing that the proposed model results in the highest ARI and NMI and achieves the best clustering accuracy. 
Figure \ref{fig:real_data_est_k} plots the true cell types and the estimated cluster memberships under all the methods.
Although the proposed Bayesian method underestimates the number of cell types by 1 and yields $K=7$,  it can identify most cell types except for fetal quiescent and fetal replicating cells. The KM method correctly estimates $K=8$ cell types. However, it cannot recognize OPC cells and gives two additional small clusters that are not interpretable. 
Other methods tend to underestimate $K$. In particular, SKM and MClust estimate $K=3$ and $K=4$ respectively, and perform worse than others in terms of much lower ARIs and NMIs, as shown in Table \ref{tab:real_data_est_k}. Both PCA-KM and tSNE-KM estimate $K=4$ by correctly identifying the astrocytes cell type and merging fetal quiescent and fetal replicating cell types into one cluster. For the other five cell types, PCA-KM identifies microglia cell type and merges oligodendrocytes, OPC, endothelial, and neuron cell types into one cluster, while tSNE-KM identifies the neuron cell type and merges oligodendrocytes, OPC, microglia, and endothelial cells as one cluster. NMF-KM is able to identify neuron, fetal quiescent, and fetal replicating cell types but cannot distinguish others. tSNE-KM identifies oligodendrocytes, OPC, microglia, and endothelial cells as one cluster. 

\begin{table}[t]
    \centering
    \begin{tabular}{c|c c c c c c c }
        \hline
        \hline Methods & Bayesian & SKM & PCA-KM & MClust & NMF-KM & tSNE-KM & KM \\
        \hline Estimate of $K$ & 7 & 3 & 4 & 4 & 6 & 4 & 9\\
        \hline ARI & 0.84 & 0.15 & 0.59 & 0.33 & 0.63 & 0.78 & 0.79\\
        \hline NMI & 0.80 & 0.22 & 0.58 & 0.35 & 0.61 & 0.70 & 0.77\\
        \hline\hline
    \end{tabular}
    \caption{Estimate of the number of clusters $K$, ARIs, and NMIs obtained by applying different methods to the scRNA-Seq data set.}
    \label{tab:real_data_est_k}
\end{table}

\begin{figure}[htp]
    \centering
    \includegraphics[width=.76\textwidth]{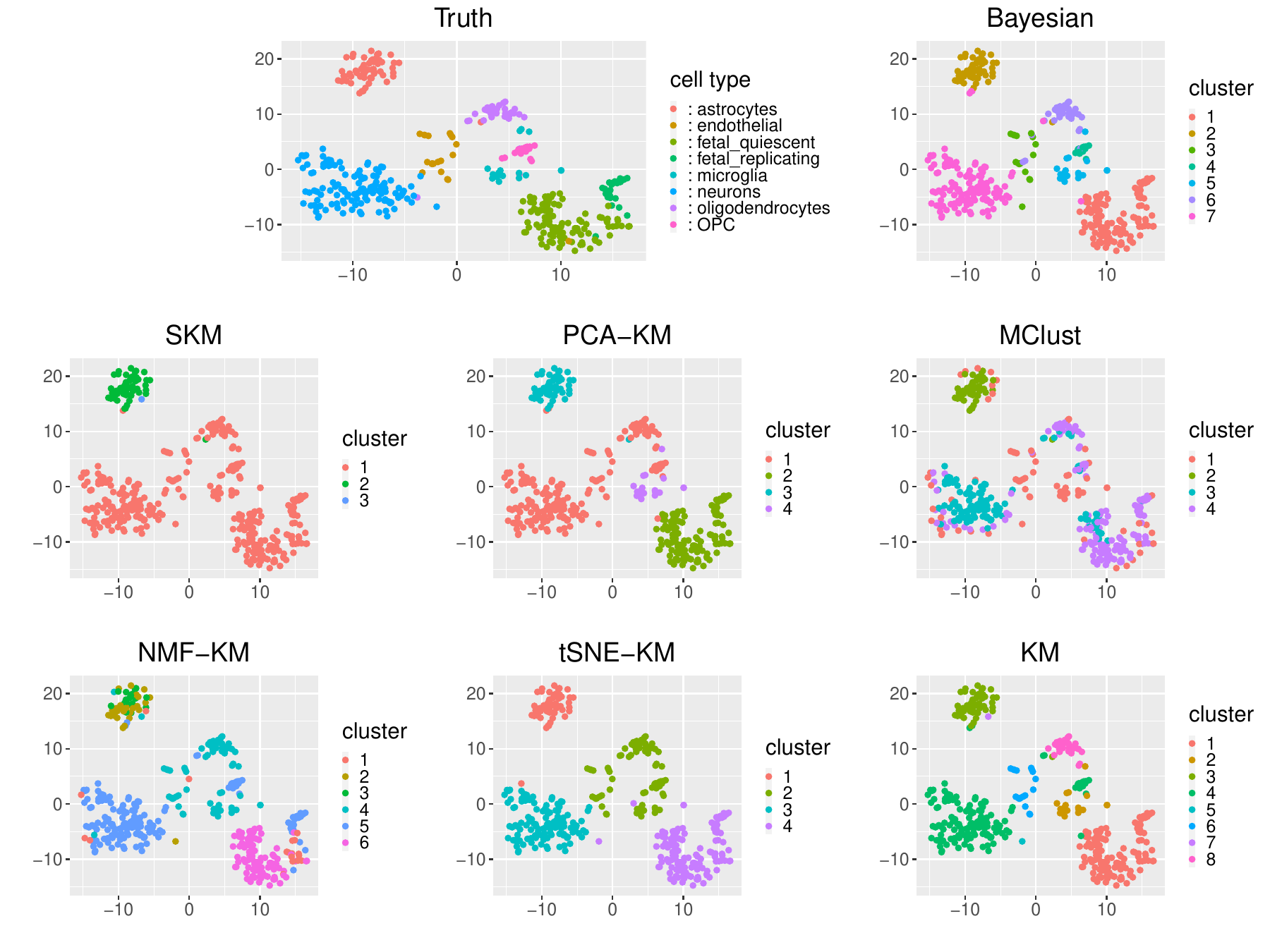}
    \caption{Clustering results of scRNA-Seq data corresponding to different methods. Data points are embedded into two-dimensional subspace by tSNE embedding.}
    \label{fig:real_data_est_k}
\end{figure}

We further examine the alternative methods when the number of clusters is set to be the true number of cell types ($K=8$). Appendix Figure \ref{fig:real_data_true_k} plots the estimated clustering memberships under all the alternative methods. As shown in Figure \ref{fig:real_data_true_k}, MClust cannot distinguish fetal quiescent and fetal replicating cell types and merges OPC and oligodendrocytes cell types into one cluster. PCA-KM and tSNE-KM return clusters with relatively similar sizes and hence their performance on small clusters are not satisfactory. SKM and NMF-KM perform significantly worse than others since they do not correctly identify any single cell type. 
Appendix Table \ref{tab:real_data_true_k} reports the resulting ARIs and NMIs, showing that the proposed Bayesian method still yields the highest ARI and NMI even though the number of clusters is correctly pre-specified for all alternative methods.

\section{Discussion}
We propose a Bayesian approach for high-dimensional Gaussian mixtures where the cluster mean vectors exhibit certain sparsity structure. We fully investigate the minimax risk for estimating the mean matrix, show that the posterior contraction rate is minimax optimal, and obtain an error bound for the mis-clustering error. Our approach demonstrates superior performance in both simulation study and real-world applications. 

There still exist challenges that need further research. 
 One extension is to consider scenarios where the cluster-specific covariance matrices have some structures, such as sparse spiked structures \citep{xie2022bayesian}. Exploring the theoretical properties of covariance matrix estimation could be an interesting future direction. 
On the implementation side, algorithms based on Markov chain Monte Carlo can be computationally expensive in ultra-high dimensions. Certain optimization-based alternatives, such as variational Bayes methods \citep{ray2021variational} can be attractive. Developing the underlying backbone theory for variational Bayes approaches can be a promising future research direction as well.


\section*{Acknowledgement}
The work of Xu was supported by NSF 1918854, NSF 1940107, and NIH R01MH128085.





\renewcommand{\thefigure}{A\arabic{figure}}
\renewcommand{\thetable}{A\arabic{table}}
\setcounter{figure}{0}    
\setcounter{table}{0}

\begin{appendices}
\section{Proofs for Section \ref{sec:model}}\label{sec:proof_model}

\begin{proof}[proof of Theorem \ref{thm2}]
We define three subspaces of $\Theta^*_K$ as follows. 
 
We consider the case where $\bSigma^* = \bI_p$.
Without loss of generality, we consider the case where $n/K$ is an integer. If not, let $n'=\lfloor n/K\rfloor K$. Then we get a lower bound of a smaller parameter space $\Theta'\subset\mathbb{R}^{p\times K}\times\mathbb{R}^{n'\times K}$ which is also a lower bound of the original parameter space $\Theta^*_K\subset\mathbb{R}^{p\times K}\times\mathbb{R}^{n\times K}$. 
We define the first subspace
$$
\Theta^*_{K1}=\left\{(\bmu ,\bl,\bSigma)\in\Theta^*_K:\bmu=[\mu_0\bw_1,\cdots,\mu_0\bw_K] ,\mu_0=\sqrt{\frac{c_1\log K}{s}}, \text{supp}(\bmu) = S,\bSigma^* = \bI_p\right\}
$$ 
for some fixed sparsity support $S\subset [p]$ with $|S| = s$, constant $c_1>0$, and $\bw_1, \dots, \bw_K \in \{0,1\}^p$. Specifically, we choose the vectors $\bw_1, \dots, \bw_K$ such that $\|(\bw_i)_S\|_0 = s$ for all $i\in [K]$ and $\|(\bw_i)_S - (\bw_j)_S\|_2^2 > s/2$ for $i\neq j$. By Lemma 4.10 in \cite{massart2007}, there exists $\{(\bw_1)_S, \dots, (\bw_K)_S\} \subset \{0,1\}^s$ satisfies these properties for $K \leq s$.
Next we define the second subspace. Note that each $\bl$ can be associated with a mapping $z:[n]\to[K]$ such that $l_i=e_{z(i)}$ where $e_i$ is the vector whose $i$th entry is 1 and 0 elsewhere. Then we define

\begin{align*}
\Theta^*_{K2}=\mathrel{\Bigg\{}
& (\bmu ,\bl, \bSigma)\in\Theta^*_K: z^{-1}(k)=\left\{\frac{(k-1)n}{K}+1,\dots,\frac{kn}{K},\bSigma^* = \bI_p\right\},
 |\mathrm{supp}(\bmu)| = s
\mathrel{\Bigg\}},
\end{align*}
where $\mathrm{supp}(\cdot)$ denotes the indices of the non-zero rows of a matrix. 
For the third subspace,
\begin{align*}
\Theta^*_{K3} = \{&(\bmu,\bl,\bSigma)\in\Theta^*_K: \mathrm{supp}(\bmu)=\mathrm{supp}(\tilde{\bmu}), \bl=\tilde{\bl},(\bmu)_{S}=(\tilde{\bmu})_{S}+\bE, \bE\in B_a^{s\times K}(0),\bSigma^* = \bI_p\},
\end{align*}
for fixed $(\tilde{\bmu},\tilde{\bl},\bI)\in\Theta_K^*$ where $B_a^{s\times K}(0)=\{\bmu\in\mathbb{R}^{s\times K}:\|\bmu\|_F\leq a\}$ represents the closed ball of the set of $s\times K$ matrices centered at $0$ with radius $a$ with respect to the Frobenius norm and $S = \mathrm{supp}(\tilde{\bmu})$. Here $a$ is a positive scalar which will be specified later.

\vspace*{2ex}
\noindent
$\blacksquare$ We first consider the minimax lower bound over $\Theta^*_{K1}$.
By Lemma 4.10 in \cite{massart2007} 
we know there exist $\{\bw_1,\cdots,\bw_K\}\subset\{0,1\}^s$ such that $\|\bw_i-\bw_j\|^2>s/2$. Consider an $\epsilon_1$-ball of $\Theta^*_{K1}$ with respect to the metric 
$d_1((\bmu,\bl_1),(\bmu,\bl_2)) = \|\bmu\bl_1^T - \bmu\bl_2^T\|_F/\sqrt{n}.$ 
Suppose $\bl_1$ and $\bl_2$ are associated with mappings $z_1,z_2:[n]\mapsto [K]$. 
We have
\begin{align*}
    n\epsilon_1^2 &>\|\bmu\bl_1^T-\bmu \bl_2^T\|^2_F \geq\mu_0^2\sum_{i=1}^n\|\bw_{z_1(i)}-\bw_{z_2(i)}\|^2_F
    \geq\mu_0^2|\{i:z_1(i)\neq z_2(i)\}|\frac{s}{2}.
\end{align*}
Let $\epsilon_1^2=(c_1\log K)/12$. Since $\mu_0^2=(c_1\log K)/s$, we have
$|\{i:z_1(i)\neq z_2(i)\}|\leq n/6$. Denote $B_\epsilon(\bmu\bl^T) := \{\bl_1\in\mathcal{L}_K:d_1((\bmu,\bl),(\bmu,\bl_1))\leq \epsilon\}$ for any $\bmu\in\mathbb{R}^{p\times K}$ and $\bl\in\mathcal{L}_K$.
Then for any $(\bmu, \bl)\in\Theta^*_{K1}$, we have
\begin{align*}
    &|B_{\epsilon_1}(\bmu \bl^T)|\leq{n\choose n/6}K^{n/6}
    \leq \exp\left(n\log 6-\frac{5n}{6}\log 5 + \frac{n}{6}\log K \right)\\
    &\quad= \exp\left(n\log 6 - \frac{5}{6}\frac{\log 5}{\log 6} n\log 6 + \frac{n}{6}\log K \right)
    \leq \exp\left(\frac{3}{10}n\log 6 + \frac{n}{6}\log K \right)\\
    &\quad\leq \exp\left(\frac{3}{10}\frac{\log 6}{\log 2}n\log K + \frac{1}{6}n\log K \right)
    \leq \exp\left( 0.95 n\log K \right),
\end{align*}
where the second inequality comes from the Stirling's formula and the fourth inequality is due to the fact that $K\geq 2$. 
Since $|\Theta^*_{K1}|=K^n$, we have
$$\log M\left(\epsilon_1,\Theta^*_{K1},d_1\right)\geq\log\frac{K^n}{{n\choose n/6}K^{n/6}} \geq \frac{1}{20} n\log K.$$ 
Note that
$D_{KL}(\mathbb{P}_{\bmu \bl_1^T}\|\mathbb{P}_{\bmu \bl_2^T})=\frac{1}{2}\|\bmu \bl_1-\bmu \bl_2^T\|^2_F\leq\frac{2\mu_0^2sn}{2}= c_1 n\log K.$
Therefore, by the generalized Fano's lemma \citep{Yu1997} 
$$
\inf_{\hat{\bmu},\hat{\bl}}\sup_{(\bmu^*, \bl^*)\in\Theta^*_{K1}}\prob_*\left( \|\hat{\bmu}\hat{\bl}^T-(\bmu^*) (\bl^*)^T\|_F^2\geq c_2n\log K\right) \geq 1 - \frac{c_1 n\log K + \log 2}{n\log K/20}\geq 0.9
$$ 
with some constant $c_2 = {c_1}/{48}>0$ for sufficiently small $c_1<{1}/{20}$ and sufficiently large $n$ where $\mathbb{P}_*$ represents the probability measure under $(\bmu^*,\bl^*,\bSigma^*)$. It follows from Markov's inequality that
\begin{align*}
&\inf_{\hat{\bmu},\hat{\bl}}\sup_{(\bmu^*, \bl^*)\in\Theta^*_{K1}}
\E_*\left(\|\hat{\bmu}\hat{\bl}^T-(\bmu^*) (\bl^*)^T\|_F^2\right)\\
&\quad\geq \inf_{\hat{\bmu},\hat{\bl}}\sup_{(\bmu^*, \bl^*)\in\Theta^*_{K1}}
c_2n\log K
\prob_*\left( \|\hat{\bmu}\hat{\bl}^T-(\bmu^*) (\bl^*)^T\|_F^2\geq c_2n\log K\right)\geq 0.9c_2n\log K.
\end{align*}

\vspace*{2ex}
\noindent
$\blacksquare$ We next consider the minimax lower bound over $\Theta_{K2}^*$. By the construction of $\Theta^*_{K2}$, we have
$\bmu \bl^T=
\begin{pmatrix}
    \bmu_1 & \dots & \bmu_1 & \bmu_2 & \dots & \bmu_2 & \dots & \bmu_K & \dots & \bmu_K
\end{pmatrix}
.$
Thus,
$\|\bmu \bl^T-\bmu'\bl^T\|^2=(n/K)\sum_{k=1}^K\|\bmu_k-\bmu'_k\|^2_2$.
Let $\bmu_k=
    (\lambda_k , a_0 \bv^T)^T
$ for $k\in [K]$, where $\lambda_k$'s are distinct scalars, $\bv\in\{0,1\}^{p-1}$, and $a_0 > 0$ is to be specified later. 
By Lemma 4.10 in \cite{massart2007} 
there exists $\{\bv^{(1)},\cdots,\bv^{(N)}\}\subset\{0,1\}^{p-1}$ such that
\begin{itemize}
    \item $\|\bv^{(i)}-\bv^{(j)}\|^2>\frac{s-1}{2}$ for $i\neq j$,
    \item $\|\bv^{(i)}\|_0=s-1$ for all $i\in [N]$,
    \item $N>\exp(c_4(s-1)\log \frac{p-1}{s-1} )$ for some $c_4\geq 0.233$.
\end{itemize}
For distinct pair $\bmu,\bmu'$, we choose 
$$\bmu=
\begin{pmatrix}
    \lambda_1 & \dots & \lambda_K \\
    a_0 \bv  & \dots & a_0 \bv
\end{pmatrix}\quad\mbox{and}\quad
\bmu'=
\begin{pmatrix}
    \lambda_1 & \dots & \lambda_K \\
    a_0 \bv'  & \dots & a_0 \bv'
\end{pmatrix}
$$
such that $\bv,\bv'\in\{\bv^{(1)},\ldots,\bv^{(N)}\}$, $\bv\neq \bv'$. 
Then we consider an $\epsilon_2$-ball in $\Theta^*_{K2}$ with respect to the metric  $d_1((\bmu,\bl_1),(\bmu,\bl_2)) = \|\bmu\bl_1^T - \bmu\bl_2^T\|/\sqrt{n},$
we have
\begin{align*}
    \frac{1}{n}\|\bmu \bl^T-\bmu'\bl^T\|_F^2&=\frac{1}{K}\sum_{k=1}^K\|\bmu_k-\bmu_k'\|_2^2
    =\frac{1}{K}\sum_{k=1}^Ka_0^2\|\bv-\bv'\|_2^2    
    >\frac{1}{K}a_0^2\frac{(s-1)K}{2}=\frac{a_0^2(s-1)}{2}.
\end{align*}
Let $a_0^2=2\{sc_3\log (p/s)\}/\{n(s-1)\}$ and $\epsilon_2^2=\{c_3s\log (p/s)\}/{n}$, where $c_3 > 0$ is a constant to be specified later. Then we have
\begin{align*}
    M(\epsilon_2,\Theta^*_{K2},d_1)\geq \exp\left(c_4(s-1)\log \frac{p-1}{s-1} \right) \geq \exp\left(\frac{c_4}{4}s\log \frac{p}{s} \right)
\end{align*}
for $s\geq 2$ because $(p - 1)/(s - 1)\geq \sqrt{p/s}$.
Note that for any $(\bmu, \bl),(\bmu', \bl)\in\Theta^*_{K2}$,
$$
D_{KL}(\mathbb{P}_{\bmu \bl^T}\|\mathbb{P}_{\bmu' \bl^T})=\frac{1}{2}\|\bmu \bl-\bmu' \bl^T\|^2_F\leq a_0^2sn.
$$ 
Without loss of generality we may assume that $s\geq 11$. 
It follows that $\log 2\leq \frac{10}{11}\frac{c_4}{4}s\log\left(\frac{p}{s}\right)$ because $c_4 \geq 0.2$. It follows that
\begin{align*}
    \frac{d_{KL}(\Theta^*_{K2})+\log 2}{\log M(\epsilon_2,\Theta^*_{K2},d_1)}\leq \frac{a_0^2sn+\log 2}{\frac{c_4}{4}s\log \frac{p}{s}}\leq \frac{16c_3}{c_4} + \frac{10}{11}.
\end{align*}
Therefore, by selecting $c_3 = c_4/(22\times 16)$, we have
$$\inf_{\hat{\bmu},\hat{\bl}}\sup_{(\bmu^*, \bl^*)\in\Theta^*_{K2}}\prob_*\left( \|\hat{\bmu}\hat{\bl}^T-(\bmu^*) (\bl^*)^T\|_F^2\geq c_3s\log \frac{p}{s}\right) \geq \frac{1}{22}$$ 
for $c_4\geq 0.233$, and for sufficiently large $n$. It follows from Markov's inequality that
\[
\inf_{\hat{\bmu},\hat{\bl}}\sup_{(\bmu^*, \bl^*)\in\Theta^*_{K2}}\E_*\left(\|\hat{\bmu}\hat{\bl}^T-(\bmu^*) (\bl^*)^T\|_F^2\right)\geq \frac{c_3}{22}s\log \frac{p}{s}.
\]

\vspace*{2ex}
\noindent
$\blacksquare$ We now consider the minimax lower bound over $\Theta^*_{K3}$. 
Let $\bmu=\tilde{\bmu}+\bE$ and $\bmu'=\tilde{\bmu}+\bE'$ where $\bE, \bE'\in B_a^{s\times K}(0)$.
Then we have 
\begin{align*}
d_{KL}(\Theta^*_{K3})
&=\frac{1}{2}\sup_{\bE,\bE'\in B_a^{s\times K}(0)}\|(\bE-\bE')(\bl^*)^T\|_F^2
\leq \frac{1}{2}\sup_{\bE,\bE'\in B_a^{s\times K}(0)} n\|\bE-\bE'\|_F^2= a^2n.
\end{align*}
Let $\epsilon_3^2=c_5{sK}n_{\min}/n^2$ for some $0<c_5<1$ where $n_{\min}$ is the size of the smallest cluster induced by $\tilde{\bl}$. Since
we have
\begin{align*}
d_1((\bmu,\tilde{\bl}),(\bmu',\tilde{\bl}))& = \frac{1}{\sqrt{n}}\|(\bE - \bE')\tilde{\bl}^T\|_F\geq \frac{1}{\sqrt{n}}\|\bE - \bE'\|_F\sigma_{\min}(\tilde{\bl})
= \sqrt{\kappa_{\min}}\|\bE - \bE'\|_F,
\end{align*}
where $\kappa_{\min} = n_{\min}/n$, it follows that
\begin{align*}
    M(\epsilon_3,\Theta^*_{K3}, d_1)&\geq M\left(\frac{\epsilon_3}{\sqrt{\kappa_{\min}}}, B_a^{s\times K}(0),\|\cdot\|_F\right)
    \geq \left(\frac{\sqrt{\kappa_{\min}}}{\epsilon_3}\right)^{Ks}\frac{\mathrm{Vol}(B_a^{s\times K}(0))}{\mathrm{Vol}(B_1^{s\times K}(0))}
    =\left(\frac{a\sqrt{\kappa_{\min}}}{\epsilon_3}\right)^{Ks}.
\end{align*}
Thus, by letting $a=\sqrt{sK/n}$ and using the generalized Fano's Lemma \citep{Yu1997} 
we obtain
\begin{align*}
    \inf_{\hat{\bmu},\hat{\bl}}\sup_{(\bmu^*, \bl^*)\in\Theta_{K3}^*}\prob_*\left( \|\hat{\bmu}\hat{\bl}^T-(\bmu^*) (\bl^*)^T\|_F^2\geq c_6\frac{sKn_{\min}}{n}\right) \geq 1-\frac{sK+\log 2}{sK\log \sqrt{\frac{1}{c_5}}}\geq 0.9
\end{align*}
for sufficiently large $s$ and $K$ and sufficiently small $c_5$ with $c_6=c_5/4$, because $sKn_{\min}/n^2\leq s/n$. Then by Markov's inequality, 
\[
\inf_{\hat{\bmu},\hat{\bl}}\sup_{(\bmu^*, \bl^*)\in\Theta^*_{K3}}\E_*\left(\|\hat{\bmu}\hat{\bl}^T-(\bmu^*) (\bl^*)^T\|_F^2\right)\geq\frac{0.9c_6sKn_{\min}}{n}.
\]

\vspace*{2ex}\noindent
$\blacksquare$ Now we combined the minimax lower bounds over $\Theta^*_{K1}$, $\Theta^*_{K2}$, and $\Theta^*_{K3}$:
\begin{align*}
    &\inf_{\hat{\bmu},\hat{\bl}}\sup_{(\bmu^*,\bl^*)\in\Theta_K^*}\E_*\left(\|\hat{\bmu} \hat{\bl}^T-(\bmu^*)(\bl^*)^T\|_F^2\right)
    \geq\inf_{\hat{\bmu},\hat{\bl}}\max_{j\in\{1,2,3,\}}\sup_{(\bmu^*,\bl^*)\in\Theta^*_{Kj}} \E_*\left(\|\hat{\bmu} \hat{\bl}^T-(\bmu^*)(\bl^*)^T\|_F^2\right)\\
    &\quad\geq\max\left\{0.9c_2n\log K, \frac{c_3}{22}s\log\frac{p}{s}, \frac{0.9c_6sKn_{\min}}{n}\right\}
    \geq C\left(s\log\frac{p}{s} + n\log K \right).
\end{align*}
\end{proof}

\begin{proof}[proof of Theorem \ref{thm3}]

Note that by a basic inequality, we have
\begin{align*}
     &\quad\|\by-(\bmu^*)(\bl^*)^T\|_F^2\geq
     \|\by-\widehat{\bmu}\widehat{\bl}^T\|_F^2
     = \|(\by - (\bmu^*)(\bl^*)^T) + ((\bmu^*)(\bl^*)^T - \widehat{\bmu}\widehat{\bl}^T)\|_F^2
     \\
     &\quad=\|\by-(\bmu^*)(\bl^*)^T\|_F^2+\|(\bmu^*)(\bl^*)^T-\widehat{\bmu}\widehat{\bl}^T\|_F^2
     +2\langle\by-(\bmu^*)(\bl^*)^T,(\bmu^*)(\bl^*)^T-\widehat{\bmu}\widehat{\bl}^T\rangle_F\\
     &\quad=\|\by-(\bmu^*)(\bl^*)^T\|_F^2+\|(\bmu^*)(\bl^*)^T-\widehat{\bmu}\widehat{\bl}^T\|_F^2\\
     &\quad\quad+2\left\langle\by-(\bmu^*)(\bl^*)^T,\frac{(\bmu^*)(\bl^*)^T-\widehat{\bmu}\widehat{\bl}^T}{\|(\bmu^*)(\bl^*)^T-\widehat{\bmu}\widehat{\bl}^T\|_F}\right\rangle_F\times \|(\bmu^*)(\bl^*)^T-\widehat{\bmu}\widehat{\bl}^T\|_F,
\end{align*}
where $\langle \cdot,\cdot\rangle_F$ is the Frobenius inner product defined by $\langle \bA,\bB\rangle_F = \mathrm{tr}(\bA^T\bB)$. After rearranging the terms on the both sides of the inequality, we obtain
\begin{align*}
\E_*\|(\bmu^*)(\bl^*)^{T} -  \widehat{\bmu}\widehat{\bl}^T\|_F 
\leq 2\E_*\left[\sup_{({\bmu}, {\bl})\in \Theta_K}\left\langle
 \bE, (\bSigma^*)^{\frac{1}{2}}\frac{{\bmu}{\bl}^T -  (\bmu^*) (\bl^*)^{T}}{\|{\bmu}{\bl}^T-(\bmu^*)(\bl^*)^T\|_F}
 \right\rangle_F\right],
\end{align*}
where $\bE=(\bSigma^*)^{-\frac{1}{2}}(\by-(\bmu^*)(\bl^*)^T)$ is the standardized noise matrix. 

Consider the set of matrices $$\widetilde{\Theta}_K=\left\{(\bSigma^*)^{\frac{1}{2}}\frac{\bmu\bl^T-(\bmu^*)(\bl^*)^T}{\|\bmu\bl^T-(\bmu^*)(\bl^*)^T\|_F}:\bmu\in\mathbb{R}^{p\times K},|\text{supp}(\bmu)|\leq s,\bl\in\mathcal{L}_K\right\}.$$
To obtain an upper bound of the right hand side of the inequality above, we use some tools of maximal inequality of empirical process. Specifically, we define a stochastic process $X(\bB)=\langle\bE,\bB\rangle_F$ indexed by a $p\times n$ matrix $\bB$. Since the entries of $\bE$ are i.i.d. standard Gaussian, it follows that $X(\bB)$ is sub-Gaussian. Then by Corollary 8.5 in \cite{kosorok2008}, 
\begin{align*}
    &\E_*\left[\sup_{({\bmu}, {\bl})\in \Theta_K}\left\langle
 \bE, (\bSigma^*)^{\frac{1}{2}}\frac{{\bmu}{\bl}^T -  (\bmu^*) (\bl^*)^{T}}{\|{\bmu}{\bl}^T-(\bmu^*)(\bl^*)^T\|_F}
 \right\rangle_F\right]
    \lesssim \int_0^{\mathrm{diam}(\widetilde{\Theta}_K)}\sqrt{\log N(\epsilon, \widetilde{\Theta}_K, \|\cdot\|_F)} \mathrm{d}\epsilon.
\end{align*}
Obtaining a sharp upper bound of $N(\epsilon, \widetilde{\Theta}_K, \|\cdot\|_F)$ is quite involved. We breakdown the computation of a sharp bound for the covering number of $\widetilde{\Theta}_K$ as follows.

\vspace*{1ex}
\noindent\textbf{Step 1: Decompose $\widetilde{\Theta}_K$ into unions of subspaces $\mathcal{E}_j^K$ where $\|\bmu\bl^T - (\bmu^*)(\bl^*)^T\|_F$ is bounded. }
Define a function $B:\mathbb{R}^{p\times K}\times \mathcal{L}_K\mapsto\widetilde{\Theta}_K$ as 
$B(\bmu,\bl)=(\bSigma^*)^{\frac{1}{2}}\frac{\bmu\bl^T-(\bmu^*)(\bl^*)^T}{\|\bmu\bl^T-(\bmu^*)(\bl^*)^T\|_F}.$
Then we have 
\begin{align*}
    &\|B(\bmu,\bl)-B(\bmu',\bl')\|_F \leq \left\|(\bSigma^*)^{\frac{1}{2}}\right\|_2 \left\|\frac{\bmu\bl^T-(\bmu^*)(\bl^*)^T}{\|\bmu\bl^T-(\bmu^*)(\bl^*)^T\|_F}-\frac{(\bmu')(\bl')^T-(\bmu^*)(\bl^*)^T}{\|(\bmu')(\bl')^T-(\bmu^*)(\bl^*)^T\|_F}\right\|_F\\
    &\quad=\lambda_{\max}(\bSigma^*)^{\frac{1}{2}} \bigg\|\frac{\bmu\bl^T\|(\bmu')(\bl')^T-(\bmu^*)(\bl^*)^T\|_F-(\bmu')(\bl')^T\|(\bmu^*)(\bl^*)^T-\bmu\bl^T\|_F}{\|\bmu\bl^T-(\bmu^*)(\bl^*)^T\|_F\|(\bmu')(\bl')^T-(\bmu^*)(\bl^*)^T\|_F}\\
    &\quad\quad\quad\quad -\frac{(\bmu^*)(\bl^*)^T(\|(\bmu')(\bl')^T-(\bmu^*)(\bl^*)^T\|_F-\|\bmu\bl^T-(\bmu^*)(\bl^*)^T\|_F)}{\|\bmu\bl^T-(\bmu^*)(\bl^*)^T\|_F\|(\bmu')(\bl')^T-(\bmu^*)(\bl^*)^T\|_F}\bigg\|_F\\
    &\quad\leq \lambda_{\max}(\bSigma^*)^{\frac{1}{2}} \left(\frac{\|\bmu\bl^T-(\bmu')(\bl')^T\|_F}{\|\bmu\bl^T-(\bmu^*)(\bl^*)^T\|_F}
    +\frac{\left|\|(\bmu')(\bl')^T-(\bmu^*)(\bl^*)^T\|_F-\|\bmu\bl^T-(\bmu^*)(\bl^*)^T\|_F\right|}{\|\bmu\bl^T-(\bmu^*)(\bl^*)^T\|_F}\right)\\
    &\quad\leq2 \lambda_{\max}(\bSigma^*)^{\frac{1}{2}} \frac{\|\bmu\bl^T-(\bmu')(\bl')^T\|_F}{\|\bmu\bl^T-(\bmu^*)(\bl^*)^T\|_F}.
\end{align*}
We split $\Theta_K$ as follows. Denote 
\[
\mathcal{E}_j^K=\left\{(\bmu,\bl):\bmu\in\mathbb{R}^{p\times K},|\text{supp}(\bmu)|\leq s,\bl\in\mathcal{L}_K,\|\bmu\bl^T-(\bmu^*)(\bl^*)^T\|_F\in(a_j,a_{j+1}]\right\}
\]
where the series $\{a_j\}_{j=-\infty}^\infty$ is to be determined later satisfying $\lim_{j\to\infty}a_j=\infty$ and $\lim_{j\to-\infty}a_j=0$. We also require that $a_j \leq a_{j + 1}/\sqrt{n}$. 
Suppose $\mathcal{N}_j^K$ is an $a_j\epsilon/2$-covering of $\mathcal{E}_j^K$ with respect to Frobenius norm, then we know that $\bigcup_{j=-\infty}^{\infty} B(\mathcal{N}_j^K)$ is an $\epsilon \lambda_{\max}(\bSigma^*)^{\frac{1}{2}}$-covering of $\widetilde{\Theta}_K$ with respect to Frobenius norm. Thus $N(\epsilon \lambda_{\max}(\bSigma^*)^{\frac{1}{2}},\widetilde{\Theta}_K,\|\cdot\|_F)\leq \sum_{j=-\infty}^{\infty}N(a_j\epsilon/2,\mathcal{E}_j^K,\|\cdot\|_F)$.

\vspace*{1ex}\noindent
\textbf{Step 2: Show that $\bl$ equals $\bl^*$ up to a permutation for $(\bmu, \bl)\in\mathcal{E}_j^K$ when $a_{j + 1}$ is small. }
denote $\Delta$ as the minimum distance among all cluster centers, that is, $\Delta=\min_{i,j\in[K]}\|\bmu^*_i-\bmu_j^*\|_2$. For fixed $\bl$ which is induced by $z:[n]\mapsto[K]$, we denote $n_{gh}\overset{\Delta}{=}|z^{-1}(g)\bigcap (z^*)^{-1}(h)|$, $n_g=\sum_{h=1}^Kn_{gh}$, and $n_h^*=\sum_{g=1}^Kn_{gh}$. We then have
\begin{align*}
    \|\bmu\bl^T-(\bmu^*)(\bl^*)^T\|_F^2
    &=\sum_{g=1}^{K}\sum_{h=1}^K n_{gh}\|\bmu_g-\bmu^*_h\|^2_2\\
    &=\sum_{g=1}^K \left\{n_g\left\|\bmu_g-\frac{\sum_{h=1}^Kn_{gh}\bmu^*_h}{n_g}\right\|_2^2+\sum_{h=1}^Kn_{gh}\|\bmu^*_h\|_2^2-\frac{\left\|\sum_{h=1}^Kn_{gh}\bmu^*_h\right\|_2^2}{n_g} \right\}\\
    &=\sum_{g=1}^K n_g\left\|\bmu_g-\frac{\sum_{h=1}^Kn_{gh}\bmu^*_h}{n_g}\right\|_2^2+C_{\bl}.
\end{align*}
 where $C_{\bl}\overset{\Delta}{=}\sum_{g=1}^K\left\{\sum_{h=1}^Kn_{gh}\|\bmu^*_h\|_2^2-\left\|\sum_{h=1}^Kn_{gh}\bmu^*_h\right\|_2^2/n_g\right\}$.

Note that by Cauchy-Schwarz inequality and triangle inequality, we have
\begin{align*}
    \left\|\sum_{h=1}^Kn_{gh}\bmu^*_h\right\|^2_2\leq \left(\sum_{h=1}^Kn_{gh}\left\|\bmu_h^*\right\|_2\right)^2\leq \sum_{h=1}^Kn_{gh}\sum_{h=1}^Kn_{gh}\|\bmu^*_h\|_2^2=n_g\sum_{h=1}^Kn_{gh}\|\bmu^*_h\|_2^2
\end{align*}
for every $g\in[K]$, which implies $C_{\bl}\geq 0$ for any $\bl$.
Note that if $\bl\bP=\bl^*$ for some permutation matrix $\bP\in\mathcal{S}^{K\times K}$ where $\mathcal{S}^{K\times K}$ is the set of all $K\times K$ permutation matrices and $\tau$ which is the permutation function induced by $\bP$, then for every $g\in[K]$, we have $n_g=n_{g,\tau(g)}$ and $n_{gh}=0$ for $h\neq \tau(g)$. Thus 
$\left\|\sum_{h=1}^Kn_{gh}\bmu^*_h\right\|_2^2=n_{g,\tau(g)}^2\|\bmu^*_{\tau(g)}\|_2^2=n_gn_{g,\tau(g)}\|\bmu_{\tau(g)}^*\|_2^2=n_g\sum_{h=1}^Kn_{gh}\|\bmu^*_h\|_2^2,$
which implies $C_{\bl}=0$. Next, by rearranging the terms in the expression of $C_{\bl}$ we have
\begin{align*}
    &\sum_{h=1}^Kn_gn_{gh}\|\bmu_h^*\|_2^2-\left\|\sum_{h=1}^Kn_{gh}\bmu^*_h\right\|_2^2
    = \sum_{h=1}^Kn_{gh}(n_g-n_{gh})\|\bmu^*_h\|_2^2-\sum_{h_1=1}^K\sum_{h_2\neq h_1}n_{gh_1}n_{gh_2} (\bmu_{h_1}^*)^T\bmu_{h_2}^*\\
    &\quad = \sum_{h_1=1}^K\sum_{h_2\neq h_1}\left\{n_{gh_1}n_{gh_2}\|\bmu^*_{h_1}\|_2^2-n_{gh_1}n_{gh_2}(\bmu^*_{h_1})^T\bmu^*_{h_2}\right\}
    = \sum_{h_1=1}^K\sum_{h_2\neq h_1} \frac{1}{2} n_{gh_1}n_{gh_2}\|\bmu^*_{h_1}-\bmu^*_{h_2}\|_2^2.
\end{align*}
Suppose there is no permutation matrix $\bP\in \mathcal{S}^{K\times K}$ such that $\bl\bP = \bl^*$. Then there exists some $g'\in[K]$ such that for some $h_1',h_2'\in[K]$ and $h_1'\neq h_2'$, $n_{g'h_1'}\geq 1$ and $n_{g'h_2'}\geq 1$. 
Furthermore, for such $g'\in [K]$ and $h_1'$, $h_2'$, $n_{g'h_1'}n_{g'h_2'}\geq (n_{g'}-1)/(K-1)$.
If this is not true, then we obtain $n_{g'h_1'}(n_{g'}-n_{g'h_1'})=\sum_{h_2\neq h_1'}n_{g'h_1'}n_{g'h_2}<n_g-1$ for the aforementioned $h_1'$, and this contradicts to the fact that $n_{g'h_1'}(n_{g'}-n_{g'h_1'})\geq n_{g'}-1$ for $1\leq n_{g'h_1'}\leq n_{g'}-1$. The reason is that $n_{g'h_1'}(n_{g'} - n_{g'h_1'})$ is a quadratic function of $n_{g'h_1'}$ and the minimum is achieved when $n_{g'h_1'} = 1$ or $n_{g'h_1'} = n_{g'} - 1$. 
Therefore, we know that if $\bl$ is not identical to $\bl^*$ up to permutation, then there exists some $g'\in[K]$ and $h_1'\neq h_2'$ such that
\begin{align*}
    C_{\bl}&=\sum_{g=1}^K\left\{\sum_{h=1}^Kn_{gh}\|\bmu^*_h\|_2^2-\frac{\left\|\sum_{h=1}^Kn_{gh}\bmu^*_h\right\|_2^2}{n_g}\right\}
    = \sum_{g=1}^K\sum_{h_1=1}^K\sum_{h_2\neq h_1} \frac{n_{gh_1}n_{gh_2}\|\bmu^*_{h_1}-\bmu^*_{h_2}\|_2^2}{2n_g} \\
    &\geq \frac{n_{g'h_1'}n_{g'h_2'}\|\bmu^*_{h_1'}-\bmu^*_{h_2'}\|_2^2}{n_{g'}}
    \geq \frac{n_{g'}-1}{n_{g'}(K-1)}\Delta^2
    \geq\frac{\Delta^2}{2(K-1)}.
\end{align*}
Thus for $\mathcal{E}_j^K$, we can see that if $a_{j+1}\leq \Delta/\sqrt{2(K-1)}$, then every $(\bmu,\bl)\in\mathcal{E}_j^K$ satisfies $\bl\bP=\bl^*$ for some permutation matrix $\bP\in\mathcal{S}^{K\times K}$. 

\vspace*{1ex}\noindent
\textbf{Step 3: Reduction of covering numbers of $\mathcal{E}_j^K$'s for small $a_{j + 1}$. }
For $(\bmu, \bl)\in\mathcal{E}_j^K$ with $a_{j + 1}\leq \Delta/\sqrt{2(K - 1)}$, we have $\|\bmu\bl^T-(\bmu^*)(\bl^*)^T\|_F=\|(\bmu\bP-\bmu^*)(\bl^*)^T\|_F\leq \sqrt{n}\|\bmu\bP-\bmu^*\|_F$
for some permutation matrix $\bP\in\mathcal{S}^{K\times K}$. 
For a fixed $\bP\in\mathcal{S}^{K\times K}$, denote
$
\mathcal{G}_{j}^K(\bP)\overset{\Delta}{=}\{\bmu\in\mathbb{R}^{p\times K}: |\text{supp}(\bmu)|\leq s, \|\bmu\bP-\bmu^*\|_F\in (a_j,a_{j+1}/\sqrt{n}]\}, 
\mathcal{G}_j^K\overset{\Delta}{=}\{\bmu\in\mathbb{R}^{p\times K}: |\text{supp}(\bmu)|\leq 2s, \|\bmu\|_F\in(a_j,a_{j+1}/\sqrt{n}]\}.
$
Then for every $\bP\in\mathcal{S}^{K\times K}$, there is an injective function $f_{\bP}:\mathcal{G}_j^K(\bP)\to\mathcal{G}_j^K$ such that $f_{\bP}(\bmu)=\bmu\bP-\bmu^*$. 
Thus $f$ is a bijective function between $\mathcal{G}_j^K(\bP)$ and $R_f(\mathcal{G}_j^K(\bP))$ where $R_f(\mathcal{G}_j^K(\bP))\subset \mathcal{G}_j^K$ is the image of function $f$. Note that $f$ is also distance-preserving with respect to the Frobenius norm, i.e., 
$\|f(\bmu)-f(\bmu')\|_F=\|\bmu-\bmu'\|_F.$
Thus for any $\eta>0$, 
$
N(\eta,\mathcal{G}_j^K(\bP),\|\cdot\|_F)=N(\eta,R_f(\mathcal{G}_j^K(\bP)),\|\cdot\|_F)\leq N(\eta,\mathcal{G}_j^K,\|\cdot\|_F).
$
We know that for every $(\bmu,\bl)\in\mathcal{E}_j^K$ such that $a_{j+1}\leq \Delta/\sqrt{2(K-1)}$, there exists $\bP\in\mathcal{S}^{K\times K}$ such that $\bl\bP=\bl^*$. Suppose $\mathcal{N}_j^K(\bP)$ is a $\frac{\zeta}{\sqrt{n}}$-covering of $\mathcal{G}_j^K(\bP)$. Then there exists $\tilde{\bmu}\in\mathcal{N}_j^K(\bP)$ such that
$\|\tilde{\bmu}\bl-(\bmu^*)(\bl^*)^T\|_F=\|(\tilde{\bmu}\bP-\bmu^*)(\bl^*)^T\|_F\in(a_j,a_{j+1}]$, i.e., $(\tilde{\bmu},\bl)\in\mathcal{E}_j^K$, and $\|\bmu\bl-\tilde{\bmu}\bl\|_F\leq \sqrt{n}\|\bmu-\tilde{\bmu}\|_F\leq \zeta$, which means $\bigcup_{\bP\in\mathcal{S}^{K\times K}}\mathcal{N}_j^K(\bP)$ is a $\zeta$-covering of $\mathcal{E}_j^K$.
Then we have
\[
N(a_j\epsilon/2,\mathcal{E}_{j}^K,\|\cdot\|_F)\leq \sum_{\bP\in\mathcal{S}^{K\times K}}N(a_j\epsilon/(2\sqrt{n}),\mathcal{G}_{j}^K(\bP),\|\cdot\|_F)\leq K!N(a_j\epsilon/(2\sqrt{n}),\mathcal{G}_j^K,\|\cdot\|_F)
\]
when $a_{j+1}\leq \Delta/\sqrt{2(K-1)}$. Note that for the covering number of the space $\mathcal{E}_j^K$, we abuse the notation and use $\|\cdot\|_F$ to denote the metric $d((\bmu_1,\bl_1),(\bmu_2,\bl_2)) = \|\bmu_1\bl_1^T - \bmu_2\bl_2^T\|_F$. 

\vspace*{1ex}\noindent
\textbf{Step 4: Reduction of covering numbers of $\mathcal{E}_j^K$'s for large $a_{j + 1}$. }
Next we consider the case when $a_j$ is relatively large. Specifically, when $a_j\geq 2\|(\bmu^*)(\bl^*)^T\|_F$, and $(\bmu, \bl)\in\mathcal{E}_j^K$, we know that 
$\|\bmu\bl^T\|_F\in (a_j-\|(\bmu^*)(\bl^*)^T\|_F,a_{j+1}+\|(\bmu^*)(\bl^*)^T\|_F]\subset (a_j/2,2a_{j+1}].$
For $\mathcal{E}_j^K$, we can write it as $\mathcal{E}_j^K=\bigcup_{m=0}^{K - 1}\mathcal{E}_{j,m}^K$ where $\mathcal{E}_{j,m}^K$ is the subset of $\mathcal{E}_j^K$ whose $\bl$ is induced by a clustering with $m$ empty clusters. Then for $\mathcal{E}_{j,m}^K$, it suffices to consider $\mathcal{F}_{j,m}^K\overset{\Delta}{=}\{(\bmu_{-m},\bl_{-m}):(\bmu,\bl)\in\mathcal{E}_{j,m}^K\}$ where $\bmu_{-m}$ and $\bl_{-m}$ are the sub-matrices of $\bmu$ and $\bl$ by deleting the columns that correspond to the empty clusters. 
For those $j$'s with $a_j\geq 2\|(\bmu^*)(\bl^*)^T\|_F$, we further have
\begin{align*}
\mathcal{F}_{j,m}^K&\subset\{(\bmu_{-m},\bl_{-m}):\|\bmu_{-m}\bl_{-m}^T\|_F\in (a_j/2, 2a_{j + 1}],|\mathrm{supp}(\bmu_{-m})|\leq s\}\\
&\subset\bigcup_{\bl_{-m}\in\mathcal{L}_{K - m}}\mathcal{H}_{j,m}^K\times\{\bl_{-m}\},
\end{align*}
because the singular values of $\bl_{-m}$ are between $\sqrt{n}$ and $1$, where 
\[
\mathcal{H}_{j,m}^K\overset{\Delta}{=}\{\bmu_{-m}\in\mathbb{R}^{p\times (K-m)}:|\text{supp}(\bmu_{-m})|\leq s, \|\bmu_{-m}\|_F\in (a_j/2,2a_{j+1}/\sqrt{n}]\}.
\]
Since for any $(\bmu_{-m},\bl_{-m})$ and $(\bmu_{-m}',\bl_{-m})\in \mathcal{H}_{j,m}^K\times\{\bl_{-m}\}$ we have
$
\|\bmu_{-m}\bl_{-m}^T - \bmu_{-m}'\bl_{-m}^T\|_F\leq \|\bl_{-m}\|_2\|\bmu_{-m} - \bmu_{-m}'\|_F\leq \sqrt{n}\|\bmu_{-m} - \bmu_{-m}'\|_F,
$
it follows that
\[
N(a_j\epsilon/2,\mathcal{E}_j^K,\|\cdot\|_F)\leq\sum_{m=0}^{K-1} N(a_j\epsilon/2,\mathcal{F}_{j,m}^K,\|\cdot\|_F)\leq \sum_{m=0}^{K-1}|\mathcal{L}_{K-m}|N(a_j\epsilon/(2\sqrt{n}),\mathcal{H}_{j,m}^K,\|\cdot\|_F).
\]

\vspace*{1ex}\noindent
\textbf{Step 5: Computing covering numbers of $\mathcal{E}_j^K$'s for small and large $a_{j + 1}$. }
We denote $a_{-1}=\Delta/\sqrt{2(K-1)}$, $a_1=\Delta/\sqrt{2(K-1)}$, $a_2=2\|(\bmu^*)(\bl^*)^T\|_F$, $\mathcal{E}_0^K=\mathcal{E}_{-1}^K=\emptyset$ and 
\begin{align*}
    \frac{a_{j+1}}{a_j}=
    \begin{cases}
    \frac{\sqrt{n}}{4}\left(1+\frac{1}{j^2}\right)^{\frac{1}{sK}} & \text{ for } j>2\\
    \sqrt{n}\left(1+\frac{1}{(-j-1)^2}\right)^{\frac{1}{2sK}} & \text{ for } j<-1
    \end{cases}.
\end{align*}
Note that without loss of generality we may assume $a_2 > a_1$. 
We have $a_{j + 1}/\sqrt{n}\geq a_j$ and 
\begin{align*}
    \lim_{j\to\infty}a_j&=\lim_{j\to\infty}\left(\frac{n}{16}\right)^{\frac{j-1}{2}}\left(\prod_{i=1}^{j-1}\left(1+\frac{1}{i^2}\right)\right)^{\frac{1}{sK}}2\|(\bmu^*)(\bl^*)^T\|_F=\infty\\
    \lim_{j\to\infty}a_{-j}&=\lim_{j\to\infty}n^{-\frac{j-1}{2}}\left(\prod_{i=1}^{j-1}\left(1+\frac{1}{i^2}\right)\right)^{-\frac{1}{2sK}}\frac{\Delta}{\sqrt{2(K-1)}}=0.
\end{align*}
For $j>2$, we have
\begin{align*}
    &N\left(\frac{a_j\epsilon}{2\sqrt{n}},\mathcal{H}_{j,m}^K,\|\cdot\|_F\right)\leq {p\choose s} \left(\frac{3\sqrt{n}}{a_j\epsilon}\right)^{s(K-m)}\left\{\left(\frac{4a_{j+1}}{\sqrt{n}}\right)^{s(K-m)}-a_j^{s(K-m)}\right\}\\
    &\quad\leq {p\choose s}\left(\frac{3\sqrt{n}}{\epsilon}\right)^{sK}\left\{\left(\frac{4a_{j+1}}{a_j\sqrt{n}}\right)^{sK}-1\right\}
    \leq {p\choose s}\left(\frac{3\sqrt{n}}{\epsilon}\right)^{sK}\frac{1}{j^2}.
\end{align*}
Then 
\begin{align*}
    \sum_{j=2}^\infty N(a_j\epsilon/2,\mathcal{E}_j^K,\|\cdot\|_F) &\leq \sum_{j=1}^\infty\sum_{m=0}^{K-1} |\mathcal{L}_{K-m}| N\left(\frac{a_j\epsilon}{2\sqrt{n}},\mathcal{H}_{j,m}^K,\|\cdot\|_F\right)
    \leq KK^n{p\choose s}\left(\frac{3\sqrt{n}}{\epsilon}\right)^{sK}\frac{\pi^2}{6}.
\end{align*}
Similarly, 
\begin{align*}
    \sum_{j=-\infty}^{-1} N(a_j\epsilon/2,\mathcal{E}_j^K,\|\cdot\|_F)&\leq \sum_{j=-\infty}^{-1} K!N\left(\frac{a_j\epsilon}{2\sqrt{n}},\mathcal{G}_{j}^K,\|\cdot\|_F\right)
    \leq K!{p\choose 2s}\left(\frac{6\sqrt{n}}{\epsilon}\right)^{2sK}\frac{\pi^2}{6}.
\end{align*}

\vspace*{1ex}\noindent
\textbf{Step 6: Computing the covering number of $\mathcal{E}_1^K$. }
Denote $\mathcal{F}_{1,m}^K(\bl)\overset{\Delta}{=}\{(\bmu_{-m},\bl_{-m}):(\bmu_{-m},\bl_{-m})\in\mathcal{E}_{1,m}^K\}$ for fixed $\bl$ which induces $m$ empty clusters. Then we have $\mathcal{F}_{1,m}^K=\bigcup_{\bl\in\mathcal{L}_{K-m}}\mathcal{F}_{1,m}^K(\bl)$. By previous derivation we get $\|\bmu_{-m}\bl_{-m}^T-(\bmu^*)(\bl^*)^T\|_F^2\leq a_{j+1}^2$ is equivalent to 
$\sum_{g=1}^{K-m} n_g\left\|\bmu_g-\frac{\sum_{h=1}^Kn_{gh}\bmu^*_h}{n_g}\right\|_2^2+C_{\bl}\leq a_{j+1}^2.$

Denote $(\bmu^*_h)^{\text{supp}(\bmu^*)\setminus S},(\bmu^*_h)^{\text{supp}(\bmu^*)\cap S}\in\mathbb{R}^p$ as the vectors which have the same values as $\bmu_h^*$ on coordinates $\text{supp}(\bmu^*)\setminus S$ and $\text{supp}(\bmu^*)\cap S$ respectively, and $0$ elsewhere. Then we have $\bmu^*_h=(\bmu^*_h)^{\text{supp}(\bmu^*)\setminus S}+(\bmu^*_h)^{\text{supp}(\bmu^*)\cap S}$ and $\langle \bmu_g,(\bmu^*_h)^{\text{supp}(\bmu^*)\setminus S} \rangle = \langle (\bmu^*_h)^{\text{supp}(\bmu^*)\cap S},(\bmu^*_h)^{\text{supp}(\bmu^*)\setminus S} \rangle = 0$. Thus, for $\mathcal{E}_1^K$ we have
\begin{align*}
    \sum_{g=1}^{K-m}n_g\left\|\bmu_g-\frac{\sum_{h=1}^Kn_{gh}(\bmu^*_h)^{\text{supp}(\bmu^*)\cap S}}{n_g}\right\|_2^2 + \sum_{g=1}^{K-m}n_g\left\|\frac{\sum_{h=1}^Kn_{gh}(\bmu^*_h)^{\text{supp}(\bmu^*)\setminus S}}{n_g}\right\|_2^2 + C_{\bl}&\leq a_{2}^2.
\end{align*}
We then denote 
$C_{\bl}'\overset{\Delta}{=}C_{\bl}+\sum_{g=1}^{K-m}n_g\left\|\frac{\sum_{h=1}^Kn_{gh}(\bmu^*_h)^{\text{supp}(\bmu^*)\setminus S}}{n_g}\right\|_2^2.$

Denote $\mathcal{F}_{1,m,S}^K(\bl)\overset{\Delta}{=}\{(\bmu_{-m},\bl_{-m})\in\mathcal{F}_{1,m}^K(\bl):\text{supp}(\bmu)=S,|S|\leq s\}$ for fixed $S\subset[p]$. Then we have $\mathcal{F}_{1,m}^K(\bl)=\bigcup_{S\subset[p],|S|\leq s}\mathcal{F}_{1,m,S}^K(\bl)$ and therefore $$N(a_1\epsilon/2,\mathcal{E}_1^K,\|\cdot\|_F)\leq\sum_{m=0}^K\sum_{\bl\in\mathcal{L}_{K-m}}\sum_{S\subset[p],|S|\leq s}N(a_1\epsilon/2,\mathcal{F}_{1,m,S}^K(\bl),\|\cdot\|_F).$$
Let
$$
\mathcal{I}_{1,m,S}^K(\bl)\overset{\Delta}{=}\left\{(\bmu,\bl_{-m})\in\mathbb{R}^{p\times (K-m)}\times\mathcal{L}_{K-m}:\sum_{g=1}^{K-m}n_g\|\bmu_g\|_2^2+C_{\bl}'\leq a_{2}^2, \text{supp}(\bmu)=S, |S|\leq s\right\}
$$
for fixed $\bl$ which induces $m$ empty clusters.
Note that there is an injective function $f:\mathcal{F}_{1,m,S}^K(\bl)\to\mathcal{I}_{1,m,S}^K(\bl)$ such that $\bmu_g\mapsto \bmu_g-\sum_{h=1}^Kn_{gh}(\bmu^*_h)^{^{\text{supp}(\bmu^*)\cap S}}/n_g$
for $g\in[K-m]$ and we know that $\mathcal{I}_{1,m,S}^K(\bl)$ is contained in an $s(K-m)$-dimensional ellipsoid with center $0$ and length of semi-axes $\{(a_{2}^2-C_{\bl}')/n_g\}_{g=1}^{K-m}$. Thus the volume of $\mathcal{F}_{1,m,S}^K(\bl)$ can be bounded 
\begin{align*}
    |\mathcal{F}_{1,m,S}^K(\bl)|&\leq |\mathcal{I}_{1,m,S}^K(\bl)|
    \leq \frac{\pi^{\frac{s(K-m)}{2}}}{\Gamma\left(\frac{s(K-m)}{2}+1\right)}\prod_{g=1}^{K-m}\frac{(a_{2}^2-C_{\bl}')^{s/2}}{n_g^{s/2}}
    \leq \frac{\pi^{\frac{s(K-m)}{2}}}{\Gamma\left(\frac{s(K-m)}{2}+1\right)}\frac{a_{2}^{s(K-m)}}{\prod_{g=1}^{K-m}n_g^{s/2}}
\end{align*}
where $\Gamma$ is the Euler's Gamma function.

Suppose 
$\mathcal{M}_{1,m,S}^K(\bl)$ is a maximal $a_1\epsilon/2$-packing of $\mathcal{F}_{1,m,S}^K(\bl)$ for fixed $\bl$ and $S\subset[p]$. Then for every $(\tilde{\bmu}_{-m},\bl_{-m})\in\mathcal{M}_{1,m,S}^K(\bl)$, consider $U_{1,m,S}((\tilde{\bmu}_{-m},\bl_{-m}),a_1\epsilon/4)\subset \mathcal{F}_{1,m,S}^K$.
We have $\|\bmu\bl^T-\tilde{\bmu}\bl^T\|_F^2=\sum_{g=1}^{K-m} n_g\|\bmu_g-\tilde{\bmu}_g\|_2^2\leq \frac{a_1^2\epsilon^2}{16}.$
Let 
$$\mathcal{U}_{1,m,S}(\bl)=\left\{(\bmu,\bl_{-m})\in \mathbb{R}^{p\times (K-m)}\times \mathcal{L}_{K-m}: \sum_{g=1}^{K-m}16n_g\|\bmu_g\|_2^2/(a_1^2\epsilon^2)\leq 1, \text{supp}(\bmu)=S,|S|\leq s\right\}$$
for fixed $\bl$ and $S\subset[p]$. Since $\bmu$ shares the same support as $\tilde{\bmu}_{-m}$ for $(\bmu,\bl_{-m})\in U_{1,m,S}((\tilde{\bmu}_{-m}\\
,\bl_{-m}),\frac{a_1\epsilon}{4})$, there exists an bijective function $f':U_{1,m,S}((\tilde{\bmu},\bl),a_1\epsilon/4)\to \mathcal{U}_{1,m,S}(\bl)$ such that $\bmu_g\mapsto\bmu_g-\tilde{\bmu}_g$ for $g\in[K-m]$. In addition, we know that $\mathcal{U}_{1,m,S}(\bl)$ is essentially an $s(K-m)$-dimensional ellipsoid with center $0$ and length of semi-axes $\{(a_1^2\epsilon^2)/16n_g\}_{g=1}^{K-m}$. Therefore, the volume
$|U_{1,m,S}(\left(\tilde{\bmu},\bl),\frac{a_1\epsilon}{4}\right)|= \frac{\pi^{\frac{s(K-m)}{2}}}{\Gamma\left(\frac{s(K-m)}{2}+1\right)}\prod_{g=1}^{K-m}\left(\frac{a_1\epsilon}{4\sqrt{n_g}}\right)^{s}.$
Note that the sets $U_{1,m,S}\left((\tilde{\bmu}_{-m},\bl_{-m}),\frac{a_1\epsilon}{4}\right)$ are disjoint when $\tilde{\bmu}_{-m}$ varies since $\mathcal{M}_{1,m,S}^K(\bl)$ is a packing. Then we have, 
\begin{align*}
    &N\left(\frac{a_1\epsilon}{2},\mathcal{F}_{1,m,S}(\bl),\|\cdot\|_F\right)\leq|\mathcal{M}_{1,m,S}^K(\bl)|\leq\frac{|\mathcal{F}_{1,m}^K|}{|U_{1,m,S}((\tilde{\bmu},\bl),a_1\epsilon/4)|}\\
    &\quad\leq\frac{\pi^{\frac{s(K-m)}{2}}/\Gamma\left(\frac{s(K-m)}{2}+1\right)}{\pi^{\frac{s(K-m)}{2}}/\Gamma\left(\frac{s(K-m)}{2}+1\right)} \left(\frac{2a_2}{\epsilon a_1}\right)^{s(K-m)}
    \leq \left(\frac{2}{\epsilon}\right)^{sK}\left(\frac{2\|(\bmu^*)(\bl^*)^T\|_F}{\Delta}\right)^{sK}.
\end{align*}
and therefore,
\begin{align*}
    &N(a_1\epsilon/2,\mathcal{E}_{1}^K,\|\cdot\|_F)\leq\sum_{m=0}^{K-1} \sum_{\bl\in\mathcal{L}_{K-m}}\sum_{S\in[p],|S|\leq s}|\mathcal{M}_{1,m,S}^K(\bl)|\\
    &\quad\leq KK^n{p\choose s}\left(\frac{2}{\epsilon}\right)^{sK}\left(\frac{2\|(\bmu^*)(\bl^*)^T\|_F}{\Delta}\right)^{sK}
    \lesssim KK^n{p\choose s}\left(\frac{2}{\epsilon}\right)^{sK}(sn)^{sK/2}.
\end{align*}
Note that $\Delta\geq\frac{1}{n^q}$ and $\|(\bmu^*)(\bl^*)^T\|_F^2=O(sn)$ by assumptions.
Then combining with the previous results we have
\begin{align*}
    \log\left(\sum_{j=-\infty}^{\infty}N(a_j\epsilon/2,\mathcal{E}_j^K,\|\cdot\|_F)\right)&\lesssim n\log K + s\log \frac{p}{s} + sK\log n+ sK\log \frac{6}{\epsilon}
\end{align*}
 
Therefore, by Corollary 8.5 in \cite{kosorok2008}, we have
\begin{align*}
    &\E_*\left[\sup_{(\widehat{\bmu}, \widehat{\bl})\in \Theta_K}\left\langle\bE, (\bSigma^*)^{\frac{1}{2}}\frac{\widehat{\bmu}\widehat{\bl}^T -  (\bmu^*) (\bl^*)^{T}}{\|\widehat{\bmu}\widehat{\bl}^T-(\bmu^*)(\bl^*)^T\|_F}\right\rangle_F\right]
    \lesssim \int_0^{\mathrm{diam}(\widetilde{\Theta}_K)}\sqrt{\log N(\epsilon, \widetilde{\Theta}_K, \|\cdot\|_F)} \mathrm{d}\epsilon\\
    &\quad\lesssim \int_0^2\sqrt{\log K+ s\log \frac{p}{s}+n\log K+\frac{sK}{2}\log n+sK\log\frac{6\lambda_{\max}(\bSigma^*)^{\frac{1}{2}}}{\epsilon}}d\epsilon\\
    &\quad\leq \sqrt{s\log \frac{p}{s}+n\log K+sK\log n} + 2\sqrt{sK}\int_0^1\sqrt{-\log u+\log (12\lambda_{\max}(\bSigma^*)^{\frac{1}{2}})}du\\
    &\quad\lesssim \sqrt{s\log \frac{p}{s}+n\log K}.
\end{align*}

\end{proof}

\section{Proofs for Section \ref{sec:theory}}\label{sec:proof_theory}
\subsection{Proof architecture}\label{sub:proof_architecture}

%
%
We first sketch the proof of Theorem \ref{thm1} by providing technical lemmas below.


\begin{lemma}\label{lemma2}
Under the conditions of Theorem \ref{thm1}, we have 
\[
\Pi\{\|\bmu \bl^T-(\bmu^*)(\bl^*)^T\|_F^2<s\log p\}\geq\exp\{-c(s\log p+n\log K^* )\}.
\]
\end{lemma}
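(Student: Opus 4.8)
The plan is to lower bound the prior mass of the Frobenius ball by restricting to a favorable sub-event on which $K = K^*$, the partition matrix matches $\bl^*$ exactly, the sparsity indicator selects the true support, and $\bmu$ is close to $\bmu^*$ columnwise. Write $S = \mathrm{supp}(\bmu^*)$, padded to size $s$ if necessary. On the event $\{\bl = \bl^*\}$ one has $\|\bmu\bl^T - (\bmu^*)(\bl^*)^T\|_F^2 = \sum_{k=1}^{K^*}n_k^*\|\bmu_k - \bmu_k^*\|_2^2 \le n\|\bmu - \bmu^*\|_F^2$, so it suffices to force $\|\bmu - \bmu^*\|_F^2 < (s\log p)/n$. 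Using the prior independence of $(K)$, $(\bl$ via $\bw)$, and $(\bmu$ via $\xi,\theta)$, I would bound
$$\Pi\{\|\bmu\bl^T - (\bmu^*)(\bl^*)^T\|_F^2 < s\log p\} \ge \Pi(K = K^*)\,\Pi(\bl = \bl^*\mid K^*)\,\Pi\left(A,\ \|\bmu - \bmu^*\|_F^2 < \tfrac{s\log p}{n}\right),$$
where $A=\{\xi_j = 1\ \forall j\in S,\ \xi_j = 0\ \forall j\notin S\}$, and then show each factor is at least $\exp\{-c(s\log p + n\log K^*)\}$.

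For the discrete factors: since $K$ has a truncated Poisson prior with fixed $\lambda$, $\log\Pi(K=K^*) \ge -\lambda + K^*\log\lambda - \log(K^*!) \gtrsim -K^*\log K^* \ge -n\log K^*$. For $\Pi(\bl = \bl^*\mid K^*)$ I would integrate the categorical likelihood against the symmetric $\mathrm{Dirichlet}_{K^*}(\alpha)$ prior to obtain the Dirichlet--multinomial marginal $\Gamma(K^*\alpha)\prod_k\Gamma(n_k^*+\alpha)/\{\Gamma(\alpha)^{K^*}\Gamma(n+K^*\alpha)\}$, and lower bound its logarithm using $\alpha \ge 1$, the bound $\Gamma(n_k^*+\alpha)/\Gamma(\alpha)\ge n_k^*!$, Stirling, and the convexity inequality $\sum_k n_k^*\log n_k^* \ge n\log(n/K^*)$; together with $(K^*\alpha + n)/n \le \alpha+1$ this yields $\log\Pi(\bl=\bl^*\mid K^*) \gtrsim -n\log K^*$.

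The spike-and-slab factor splits through independence given $\xi$. Integrating $\Pi(A\mid\theta) = \theta^s(1-\theta)^{p-s}$ over $\theta\sim\mathrm{Beta}(1,\beta_\theta)$ gives $\Pi(A) = \beta_\theta B(s+1,\, p - s + \beta_\theta) \ge \beta_\theta(p+\beta_\theta)^{-(s+1)}$, whence $\log\Pi(A) \gtrsim -(s+1)\log(p+\beta_\theta) \gtrsim -s\log p$ because $\beta_\theta = p^{1+\kappa}\log p$. Conditional on $A$ the off-support entries are i.i.d.\ Laplace$(\lambda_0)$ and the on-support entries are i.i.d.\ Laplace$(\lambda_1)$, and the two blocks are independent, so I allocate half the budget $(s\log p)/(2n)$ to each. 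For the off-support block I use $\E[\mu_{kj}^2] = 2/\lambda_0^2$, the hypothesis $\lambda_0 \ge 2\log(p/s)\sqrt{np/(s\log p)}$, and Markov's inequality to obtain $\Pi(\sum_{j\notin S,k}\mu_{kj}^2 < \tfrac{s\log p}{2n}) \ge 1/2$. For the on-support block, with $\delta^2 = \log p/(2nK^*)$ the single-coordinate Laplace estimate $\Pi(|\mu_{kj}-\mu_{kj}^*|<\delta)\ge\lambda_1\delta\,e^{-\lambda_1(|\mu_{kj}^*|+\delta)}$, multiplied over the $sK^*$ entries, gives
$$-\log\Pi(\text{on-support close}) \le sK^*\log\frac{1}{\lambda_1\delta} + \lambda_1\sum_{j\in S,k}|\mu_{kj}^*| + \lambda_1 sK^*\delta.$$
Here $sK^*\log(1/(\lambda_1\delta)) \lesssim sK^*\log n \lesssim s\log p$ by $\lambda_1\gtrsim n^{-\gamma}$ and Assumption \ref{assump1}, while $\lambda_1\sum_{j,k}|\mu_{kj}^*| \le \lambda_1\sqrt{sK^*}\,\|\bmu^*\|_F \lesssim s\sqrt{K^*\log p} \lesssim s\log p$ by the upper bound on $\lambda_1$ and $K^*\lesssim\log p$.

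I expect the main obstacle to be this slab computation: in particular, controlling the residual term $\lambda_1 sK^*\delta$ and verifying that the joint (column-shared) sparsity pattern does not inflate the $\log(1/(\lambda_1\delta))$ contribution beyond order $s\log p$. This is precisely where the calibration of $\lambda_0$, $\lambda_1$, and $\beta_\theta$, and the low-rank Assumption \ref{assump1} relating $K^*$, $\log n$, and $\log p$, are indispensable. Collecting the five factors then yields $\log\Pi\{\|\bmu\bl^T - (\bmu^*)(\bl^*)^T\|_F^2 < s\log p\} \gtrsim -c(s\log p + n\log K^*)$, which is the claim.
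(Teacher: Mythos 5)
Your overall architecture is exactly the paper's: restrict to the event $\{K=K^*\}\cap\{\bl=\bl^*\}\cap A$, factor the prior mass into a truncated-Poisson term, a Dirichlet--multinomial term, a Beta--Bernoulli term for $A$, and a continuous small-ball term, and your bounds for the three discrete factors (including the $\Gamma(n_k^*+\alpha)/\Gamma(\alpha)\ge n_k^*!$ and convexity route to $-n\log K^*$, and the Beta integral giving $-s\log p$) match the paper's computations. The two places where you deviate are in the continuous part, and both contain real problems. First, the spike block: your Markov bound at the minimal admissible $\lambda_0$ gives $\Pi\bigl(\sum_{j\notin S,k}\mu_{kj}^2\ge \tfrac{s\log p}{2n}\bigr)\le K^*/\log^2(p/s)$, so the claimed ``$\ge 1/2$'' requires $K^*\le \tfrac12\log^2(p/s)$, which Assumptions \ref{assump1}--\ref{assump4} do not imply (e.g.\ $\log(p/s)=O(1)$ with $K^*\to\infty$ is admissible); worse, once the mean exceeds the threshold, Markov yields nothing and the lower-deviation probability of the sum can itself be exponentially small in $(p-s)K^*$, destroying the factorization. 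The paper avoids this entirely by bounding coordinatewise: the calibration $\lambda_0\ge 2\log(p/s)\sqrt{np/(s\log p)}$ is engineered precisely so that $\Pi(|\mu_{jk}|<\epsilon/(2\sqrt p))\ge 1-s/p$, whence $(1-s/p)^{pK^*}\ge \exp\{-\log(2e)\,sK^*\}$, a cost $\lesssim s\log p$ in every regime.

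Second, the residual $\lambda_1 sK^*\delta$ you flag is genuinely unresolved at the largest allowed $\lambda_1\asymp\sqrt{s\log p}/\|\bmu^*\|_F$: it equals $s\log p\cdot\sqrt{sK^*}/(\|\bmu^*\|_F\sqrt{2n})$, which exceeds order $s\log p$ unless $sK^*\lesssim n\|\bmu^*\|_F^2$ --- not among the hypotheses. The paper sidesteps every linear-in-$\lambda_1$ penalty by writing the Laplace slab as a normal scale mixture, adding $\bigcap_{j\in S_0}\{1<\phi_j<4\}$ to the event $\mathcal{A}$ (prior cost only $e^{-cs}$), and applying Anderson's lemma to the conditionally Gaussian coordinates; this produces the quadratic penalty $\tfrac12\lambda_1^2\|\bmu^*\|_F^2\lesssim s\log p$, which is exactly what the upper bound on $\lambda_1$ is designed to absorb, plus an entropy-type term $sK^*(1+|\log(\epsilon\lambda_1/4\sqrt s)|)\lesssim sK^*\log n\lesssim s\log p$ identical to your first slab term. (Your worry about the shared support inflating the count is unfounded: given $\xi$, the $sK^*$ slab entries are independent, so the bookkeeping is as you wrote it.) Both gaps are repairable by importing these two devices from the paper; as written, though, the proof does not go through under the stated assumptions.
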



Note that the prior of $\bmu$ is absolutely continuous with respect to the Lebesgue measure, which implies $|\mathrm{supp}(\bmu)|=p$ with probability 1. 
However, we expect most rows of $\bmu$ come from the ``spike" distribution \emph{a priori}, which implies the ``magnitude" of these rows are quite small with high prior probability. This motivates us to define a generalized notation of the support. Formally, for $\delta>0$, we define $\mathrm{supp}_\delta(\bmu)\overset{\Delta}{=}\{j\in [p]:\|\bmu_{j*}\|_1\leq \delta\}$ as the soft support of $\bmu$ with threshold $\delta$, where $\bmu_{j*}$ represents the $j$th row of $\bmu$. Let $(\bmu_k)_{S_\delta}=(\mu_{jk}:j\in\mathrm{supp}_\delta(\bmu))\in\mathbb{R}^{|\mathrm{supp}_\delta(\bmu)|}$ denotes the sub-vector of $\bmu_k$ whose coordinates are in $\mathrm{supp}_\delta(\bmu)$. It is conceivable that for small $\delta$, the size of the soft support of $\bmu$ is small compared with $p$ with high prior probability. This heuristics is formalized through the following lemma.

\begin{lemma}\label{lemma_soft_supp}
Given $K$, suppose $\bmu\in\mathbb{R}^{p\times K}$ follows the prior specification (\ref{prior2}) and (\ref{prior6}) with some hyperparameters $\kappa>0$, $\alpha>1$, $\lambda_0\gg \lambda_1>0$ and let $\delta=(1+\kappa)\log p/\lambda_0$.
Assume $K\log\log p\leq \log p$. Then given $K$, we have, for $\bmu\in\mathbb{R}^{p\times K}$,
\[
\Pi\left(|\mathrm{supp}_{\delta}(\bmu)|\geq \beta\left(s+\frac{n\log K}{\log p}\right)\right)\leq \exp\left(-c(s\log p+n\log K)\right)
\]
for some constant $\beta, c>0$.

\end{lemma}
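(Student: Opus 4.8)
The plan is to bound the number of ``large'' rows by separating the rows governed by the slab from those governed by the spike, and to control each contribution by a tail bound. Writing $\xi = (\xi_1,\ldots,\xi_p)$ for the latent indicators and recalling $\delta = (1+\kappa)\log p/\lambda_0$, I would first decompose
\[
|\mathrm{supp}_\delta(\bmu)| = \sum_{j=1}^p \mathbbm{1}(\|\bmu_{j*}\|_1 > \delta) \le \underbrace{\sum_{j=1}^p \xi_j}_{=:\,T} \;+\; \underbrace{\sum_{j:\,\xi_j = 0}\mathbbm{1}(\|\bmu_{j*}\|_1 > \delta)}_{=:\,R},
\]
where $T$ counts the slab rows and $R$ counts the spike rows whose $\ell_1$-norm exceeds the threshold. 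Since $|\mathrm{supp}_\delta(\bmu)| \le T + R$, it suffices to show that both $P\{T \ge \tfrac{\beta}{2}(s + n\log K/\log p)\}$ and $P\{R \ge \tfrac{\beta}{2}(s + n\log K/\log p)\}$ are at most $\exp\{-c(s\log p + n\log K)\}$ for a suitably large constant $\beta$, with the probability taken under the prior (\ref{prior2})--(\ref{prior6}) given $K$ and with $\theta$ integrated out.

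For the spike term $R$, conditionally on $\xi$ the rows with $\xi_j = 0$ have i.i.d.\ coordinates $\mu_{kj}\sim\mathrm{Laplace}(\lambda_0)$, so $\|\bmu_{j*}\|_1 = \sum_{k=1}^K|\mu_{kj}|$ is $\mathrm{Gamma}(K,\lambda_0)$ and the per-row exceedance probability is $q := P(\mathrm{Gamma}(K,1) > (1+\kappa)\log p)$. A Chernoff bound gives $\log q \le -(1+\kappa)\log p + K + K\log\{(1+\kappa)\log p/K\}$; the crucial observation is that under $K\log\log p \le \log p$ (i.e.\ $K \le \log p/\log\log p$) the last two terms are $o(\log p)$, whence $q \le p^{-(1+\kappa)+o(1)}$ and $pq \le p^{-\kappa/2}$ for $n$ large. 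Since $R \mid \xi$ is stochastically dominated by $\mathrm{Binomial}(p,q)$, a union bound yields $P(R \ge r) \le \binom{p}{r}q^r \le (e\,pq/r)^r$, which for $r = \tfrac{\beta}{2}(s + n\log K/\log p) \ge 1$ is at most $(e\,p^{-\kappa/2})^r \le \exp\{-\tfrac{\kappa}{4}r\log p\} = \exp\{-\tfrac{\kappa\beta}{8}(s\log p + n\log K)\}$.

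For the slab term $T$, integrating out $\theta\sim\mathrm{Beta}(1,\beta_\theta)$ with $\beta_\theta = p^{1+\kappa}\log p$ gives $\E[\theta^t] = \Gamma(1+t)\Gamma(1+\beta_\theta)/\Gamma(1+t+\beta_\theta) \le t!\,\beta_\theta^{-t}$, so a union bound over size-$t$ subsets of active indicators produces $P(T \ge t) \le \binom{p}{t}\E[\theta^t] \le (p/\beta_\theta)^t = (p^\kappa\log p)^{-t}$. Taking $t = \tfrac{\beta}{2}(s + n\log K/\log p)$ and using $(n\log K/\log p)\cdot\log p = n\log K$ gives $P(T \ge t) \le \exp\{-\tfrac{\kappa\beta}{2}(s\log p + n\log K)\}$. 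Combining the two bounds and choosing $\beta$ so that $\min(\kappa\beta/8,\kappa\beta/2) \ge c$ finishes the proof.

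The main obstacle is the Gamma tail estimate in the second step: one must verify that the polynomial prefactor $\{(1+\kappa)\log p\}^{K-1}/(K-1)!$ implicit in $P(\mathrm{Gamma}(K,1) > (1+\kappa)\log p)$ inflates $q$ only by a factor $p^{o(1)}$, rather than a genuine power of $p$. A crude bound would give $pq \asymp p^{1-\kappa}$, which is far too large; the refinement hinges precisely on the assumption $K\log\log p \le \log p$ (a mild consequence of the low-rank Assumption~\ref{assump1}), which forces $K\log\{(1+\kappa)\log p/K\} = o(\log p)$ and thus $pq \to 0$. Everything else---the Beta-Binomial moment computation and the two union bounds---is routine once this estimate is in place.
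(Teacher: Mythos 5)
Your proof is correct, and it takes a genuinely different (and in one respect sharper) route than the paper's. The paper argues row by row: it conditions on the event $A=\{\theta\le c_1(s+n\log K/\log p)/p^{1+\kappa}\}$, bounds the marginal exceedance probability $\Pi(\|\bmu_{j*}\|_1>\delta)$ by a spike Gamma tail plus $\theta$, applies the Hagerup--R\"ub Chernoff bound for $\mathrm{Binomial}(p,q)$ to the count of exceeding rows, and finally controls $\Pi(A^c)$ via the $\mathrm{Beta}(1,\beta_\theta)$ tail $(1-x)^{\beta_\theta}\le e^{-\beta_\theta x}$. You instead split $|\mathrm{supp}_\delta(\bmu)|\le T+R$: the slab count $T$ is handled by marginalizing $\theta$ exactly through $\E[\theta^t]\le t!\,\beta_\theta^{-t}$ and a union bound over size-$t$ subsets, which eliminates the conditioning event and its separate tail estimate altogether; the spike count $R$ is handled with the optimized Gamma Chernoff bound $\log q\le -x+K+K\log(x/K)$ at $x=\lambda_0\delta=(1+\kappa)\log p$, rather than the cruder incomplete-gamma bound $(\lambda_0\delta)^K e^{-\lambda_0\delta}$ invoked in the paper. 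Your refinement is not cosmetic: the paper absorbs $K\log\log p$ into $\log p$ wholesale, degrading the spike tail to $p^{-\kappa}$, and then folds $\exp(-\kappa\log p)$ into $(1+c_1)(s+n\log K/\log p)/p^{1+\kappa}$ --- a step that implicitly requires $p\lesssim s+n\log K/\log p$ and is delicate in the sparse regime $s\ll p$, $p/n\to\infty$; this is exactly the obstacle you flagged, where a crude estimate would leave $pq\asymp p^{1-\kappa}$. Your observation that $K\mapsto K\log\{(1+\kappa)\log p/K\}$ is increasing on the admissible range and, at the extreme $K=\log p/\log\log p$, equals $\log p\cdot\log\{(1+\kappa)\log\log p\}/\log\log p=o(\log p)$, preserves the full $p^{-(1+\kappa)+o(1)}$ decay, whence $pq\le p^{-\kappa/2}$ and the elementary bound $\binom{p}{r}q^r\le(e\,pq/r)^r$ closes the argument; the $T$ bound $\binom{p}{t}\E[\theta^t]\le(p/\beta_\theta)^t=(p^\kappa\log p)^{-t}$ is likewise airtight. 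The only loose ends are trivial: round $t$ and $r$ up to integers (harmless since $T,R$ are integer-valued) and note $r\ge 1$ holds because $s\to\infty$.
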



\begin{lemma}\label{lemma1}
Let $(\bmu',\bl')\in\Theta = \bigcup_{K = 1}^{K_{\max}}\mathbb{R}^{p\times K}\times\mathcal{L}_K$ be such that $(\bmu')(\bl')^T\neq (\bmu^*)(\bl^*)^T$ and consider 
\[
\mathcal{E}=\{(\bmu, \bl):\|\bmu \bl^T-(\bmu')(\bl')^T\|_F\leq\delta\|(\bmu')(\bl')^T-(\bmu^*)(\bl^*)^T\|_F\},
\] 
for some sufficiently small constant $\delta$ such that $0 < \delta < \frac{\|\bSigma^*\|_2}{2(\|\bSigma^*\|_2+2)}$. Assume the conditions of Theorem \ref{thm1} hold. 
Let $p_0(\by_i)$ and $p^*(\by_i)$ be the density function of $N(((\bmu^*)(\bl^*)^T)_i, \bI_p)$ and $N(((\bmu^*)(\bl^*)^T)_i, \bSigma^*)$ respectively. Denote $p_0(\by) = \prod_{i=1}^n p_0(\by_i)$ and $p^*(\by) = \prod_{i=1}^n p^*(\by_i)$.
Then there exists a test function $\phi_n$ such that 
\begin{align*}
\E_*\phi_n &\leq\exp\{-c_1\|(\bmu')(\bl')^T-(\bmu^*)(\bl^*)^T\|_F^2\},\\
\sup_{(\bmu, \bl)\in \mathcal{E}}\E_{(\bmu, \bl, \bI_p)}\left(\frac{p^*(\by)}{p_0(\by)}(1-\phi_n)\right) &\leq \exp\left\{-c_2 \|(\bmu')(\bl')^T-(\bmu^*)(\bl^*)^T\|_F^2\right\}
\end{align*}
where $c_1,c_2 > 0$ are some positive constants that are independent of $n$.
\end{lemma}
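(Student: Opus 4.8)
The plan is to exploit the fact that the likelihood $p_n(\by\mid\bmu,\bl)$ is \emph{exactly} a Gaussian location model in the mean matrix $M=\bmu\bl^T$: we observe $\by=M+\bE$ with $\bE$ having i.i.d.\ standard normal entries, and the separating metric is the Frobenius norm. Writing $M^*=(\bmu^*)(\bl^*)^T$, $M'=(\bmu')(\bl')^T$ and $r=\|M'-M^*\|_F>0$, I would construct a single linear (directional) test aimed along the unit-Frobenius-norm direction $\bU=(M'-M^*)/r$. Define the statistic $T=\langle \by-M^*,\bU\rangle_F$ (using the Frobenius inner product $\langle\bA,\bB\rangle_F=\mathrm{tr}(\bA^T\bB)$) and the test $\phi_n=\mathbbm{1}\{T\geq \tau r\}$ with threshold parameter $\tau=(1-\delta)/2\in(0,1-\delta)$.

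The two error probabilities then follow from standard Gaussian tail bounds. Under $\prob_{M^*}$ we have $\by-M^*=\bE$, so $T=\langle\bE,\bU\rangle_F\sim N(0,1)$ because $\bU$ has unit Frobenius norm; hence $\E_{(\bmu^*)(\bl^*)^T}\phi_n=\prob(Z\geq \tau r)\leq \exp(-\tau^2 r^2/2)$. For the type-II error, fix any $M=\bmu\bl^T\in\mathcal{E}$. Under $\prob_M$ we have $T=\langle M-M^*,\bU\rangle_F+\langle\bE,\bU\rangle_F$, where the stochastic part is again $N(0,1)$ and the deterministic part decomposes as $\langle M-M',\bU\rangle_F+\langle M'-M^*,\bU\rangle_F=\langle M-M',\bU\rangle_F+r$. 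Since $\|M-M'\|_F\leq\delta r$ by the definition of $\mathcal{E}$, Cauchy--Schwarz gives $\langle M-M^*,\bU\rangle_F\geq (1-\delta)r$. Therefore $\E_{M}(1-\phi_n)=\prob_M(T<\tau r)\leq \prob\!\big(Z<\tau r-(1-\delta)r\big)\leq \exp\!\big(-((1-\delta)-\tau)^2 r^2/2\big)$, uniformly over $M\in\mathcal{E}$.

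Choosing $\tau=(1-\delta)/2$ makes both exponents equal to $(1-\delta)^2 r^2/8$. Since $\delta<3/4$ forces $(1-\delta)^2>1/16$, the constants $c_1=c_2=(1-\delta)^2/8>0$ are bounded below independently of $n$, which yields exactly the claimed bounds with $r^2=\|(\bmu')(\bl')^T-(\bmu^*)(\bl^*)^T\|_F^2$.

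I do not expect a serious analytic obstacle here, precisely because the noise is homoskedastic standard Gaussian with known identity covariance, so the single linear functional $T$ already separates the point null $M^*$ from the entire ball $\mathcal{E}$; no covering or empirical-process machinery is needed \emph{at the level of this lemma} (that enters later, when these point-versus-ball tests are aggregated over an $\epsilon$-net of the sieve). The only points requiring minor care are verifying that $\phi_n$ is a legitimate statistic---it depends only on the data $\by$ and on the fixed quantities $M^*,M'$, which is permissible in the \cite{ghosal2000} testing framework---and checking that the geometric lower bound $\langle M-M^*,\bU\rangle_F\geq (1-\delta)r$ holds \emph{uniformly} over $\mathcal{E}$, which is where the restriction $\delta<3/4$ (indeed any $\delta<1$) keeps the alternative signal strictly positive.
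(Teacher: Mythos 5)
Your proposal is correct and is essentially the paper's proof in disguise: the paper's likelihood-ratio statistic $\|\by-(\bmu^*)(\bl^*)^T\|_F^2-\|\by-(\bmu')(\bl')^T\|_F^2$ is an affine function of your linear statistic $T=\langle\by-M^*,\bU\rangle_F$, and both arguments use the same Gaussian tail bound for the type-I error and the same Cauchy--Schwarz decomposition $\langle M-M^*,\bU\rangle_F\geq(1-\delta)r$ for the uniform type-II bound over $\mathcal{E}$. The only difference is the threshold: the paper fixes it at $r/4$ (yielding margin $c_\delta=3/4-\delta$, which is precisely where the hypothesis $\delta<3/4$ enters), whereas your $\delta$-dependent choice $\tau=(1-\delta)/2$ balances the two exponents at $(1-\delta)^2r^2/8$ and would in fact work for any $\delta<1$.
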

\begin{lemma}
\label{lemma4}
Let 
\begin{align*}
\mathcal{F}_n = \bigcup_{K = 1}^{K_{\max}}\bigg\{\bmu \bl^T:&\bmu\in\mathbb{R}^{p\times K}, |\mathrm{supp}_\delta(\bmu)|\leq \beta \left(s+\frac{n\log K}{\log p}\right),
\max_{k\in [K]}\|(\bmu_k)_{S_\delta}\|_\infty\leq a_n, \bl\in\mathcal{L}_K \bigg\}
\end{align*} 
where $\beta$ and $\delta$ are defined as in Lemma \ref{lemma_soft_supp}
and $a_n=(s\log p+n\log K_{\max})n^{\gamma}$ for some constant $\gamma>0$. Denote $N(\epsilon_n,\mathcal{F}_n,d)$ as the covering number of $\mathcal{F}_n$ with respect to the metric $d(\bA,\bB)={\|\bA-\bB\|_F}/{\sqrt{n}}$. Suppose 
$\epsilon_n^2=(s\log p+n\log K_{\max})/n.$ Then we have
$$N(\epsilon_n,\mathcal{F}_n,d)\leq \exp(cn\epsilon_n^2)$$ for some constant $c$.
\end{lemma}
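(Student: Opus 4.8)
The plan is to verify the Ghosal--Ghost--van der Vaart entropy condition directly at the single scale $\epsilon_n$. Writing $r := \epsilon_n\sqrt{n}\|\bmu^*\|_F = \sqrt{s\log p+n\log K_{\max}}$, so that $d(\bA,\bB)\le\epsilon_n$ is equivalent to $\|\bA-\bB\|_F\le r$, the target is $\log N(\epsilon_n,\mathcal{F}_n,d)\lesssim s\log p+n\log K_{\max}$. First I would reduce to a fixed number of clusters by a union bound over $K\in[K_{\max}]$ (this costs only an additive $\log K_{\max}$, which is absorbed). For fixed $K$, every $\bmu\bl^T\in\mathcal{F}_n$ is specified by three independent pieces of data: the assignment matrix $\bl\in\mathcal{L}_K$, the soft support $S=\mathrm{supp}_\delta(\bmu)$ of size at most $s':=\beta(s+n\log K/\log p)$, and the centroid entries restricted to $S$. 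Accordingly I would split $\bmu=\bmu^{(S)}+\bmu^{(S^c)}$ (keeping, respectively, the rows in and out of $S$) and cover the two pieces separately.

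The off-support piece can be discarded. Every row $j\notin S$ satisfies $\|\bmu_{j\cdot}\|_2\le\|\bmu_{j\cdot}\|_1\le\delta$, so $\|\bmu^{(S^c)}\|_F^2\le p\delta^2$ and hence $\|\bmu^{(S^c)}\bl^T\|_F\le\|\bl^T\|_2\|\bmu^{(S^c)}\|_F\le\sqrt{np}\,\delta$. Plugging in $\delta=(1+\kappa)\log p/\lambda_0$ together with the prescribed lower bound $\lambda_0\ge 2\log(p/s)\sqrt{np/(s\log p)}$ gives
\[
\sqrt{np}\,\delta\;\le\;\frac{(1+\kappa)\log p}{2\log(p/s)}\sqrt{s\log p}\;\lesssim\;\sqrt{s\log p}\;\le\;r ,
\]
the middle step using $\log(p/s)\asymp\log p$. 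Thus the off-support contribution is already within the covering radius, so it may be represented by the single choice $\bmu^{(S^c)}=0$ and contributes nothing to the count; this is precisely the technical role of the large value of $\lambda_0$.

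For the on-support piece I would count multiplicatively: there are $K^n$ choices of $\bl$, at most $\binom{p}{s'}\le(ep/s')^{s'}$ choices of $S$, and for each fixed $(\bl,S)$ an $\eta$-net of the box $\{\bmu^{(S)}:\|\bmu^{(S)}\|_\infty\le a_n\}\subset\mathbb{R}^{s'\times K}$. Since $\|\bmu^{(S)}\bl^T-\widetilde\bmu^{(S)}\bl^T\|_F^2=\sum_{k}n_k\|\bmu_k^{(S)}-\widetilde\bmu_k^{(S)}\|_2^2$, I would use a cluster-size–weighted net (resolution $\eta_k\asymp r/\sqrt{K n_k}$ per cluster) so that the induced Frobenius error in $\bmu^{(S)}\bl^T$ is $\le r/2$. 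Taking logarithms,
\[
\log N(\epsilon_n,\mathcal{F}_n,d)\;\lesssim\;\log K_{\max}+n\log K+s'\log\frac{ep}{s'}+s'K\log\!\Big(\tfrac{a_n\sqrt{s'K}}{\,r/\sqrt n\,}\Big)+\tfrac12 s'K\log n .
\]
Here $n\log K\le n\log K_{\max}$ and $s'\log(ep/s')\lesssim s'\log p=\beta s\log p+\beta n\log K$ both land inside the budget immediately.

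The main obstacle is the entry-net term $s'K\log(a_n\sqrt{s'K}\sqrt n/r)$, which a priori carries an unwanted factor of $K$. The key device is the low-rank Assumption~\ref{assump1}, $K\log n\lesssim\log p$: it yields $s'K\log n=\beta(sK\log n+nK\log K\cdot\log n/\log p)\lesssim s\log p+n\log K_{\max}$, since $sK\log n\lesssim s\log p$ and $K\log n/\log p\lesssim 1$. The delicate point I expect to fight with is keeping the range-to-resolution ratio inside the logarithm at the level of a power of $n$ rather than of $p$: after substituting $a_n=(s\log p+n\log K_{\max})n^\gamma$ and $r=\sqrt{s\log p+n\log K_{\max}}$ one finds $a_n\sqrt n/r=\sqrt{s\log p+n\log K_{\max}}\;n^{\gamma+1/2}$, and the residual $\tfrac12\log(s\log p+n\log K_{\max})$ and $\tfrac12\log(s'K)$ contributions must be controlled so that, multiplied by $s'K$, they do not exceed $s\log p+n\log K_{\max}$; this is exactly the step where Assumption~\ref{assump1} (trading every stray $K\log n$ for a $\log p$, and absorbing $sK\log n$ into $s\log p$) is indispensable. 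Assembling the bounded pieces then gives $\log N(\epsilon_n,\mathcal{F}_n,d)\lesssim s\log p+n\log K_{\max}=c\,n\|\bmu^*\|_F^2\epsilon_n^2$, as claimed.
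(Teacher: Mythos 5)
Your proposal is correct and follows essentially the same route as the paper's proof of Lemma \ref{lemma4}: a union bound over $K\in[K_{\max}]$, the $K^n$ count for $\bl\in\mathcal{L}_K$, the $\binom{p}{s'}$ choice of soft support, a box net for the on-support entries yielding the $s'K\log(3a_n/\epsilon_1)$ term, and Assumption \ref{assump1} to trade every stray $K\log n$ for a $\log p$ — exactly the delicate step you flag. The only (cosmetic) difference is your treatment of the off-support rows, which you absorb directly via the norm bound $\sqrt{np}\,\delta\lesssim\sqrt{s\log p}$ rather than the paper's volume-comparison argument for $\mathcal{G}_{nK1\delta^c}$; both hinge on the same largeness of $\lambda_0$, and your version is if anything the more transparent of the two.
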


\begin{lemma}
\label{lemma3}
Let $\delta$ be defined as in Lemma \ref{lemma_soft_supp} and $\mathcal{F}_n$ be defined as in Lemma \ref{lemma4}.
Assume the conditions of Theorem \ref{thm1} hold. Then we have
\[
\Pi(\mathcal{F}_n^c)\leq\exp\{-c(s\log p+n\log K^*)\}
\] for some constant $c$.
\end{lemma}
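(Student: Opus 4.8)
The plan is to exploit the mixture structure of the prior over $K$ and to split the event $\mathcal{F}_n^c$ into the two ways a draw can fail the definition of $\mathcal{F}_n$: either its soft support is too large, or its entries are too big. Writing $\pi(K)$ for the truncated Poisson weights and $\mathcal{F}_n^{(K)}$ for the $K$-th slice of $\mathcal{F}_n$, I would first condition on $K$ and use $\mathcal{F}_n^{(K)}\subseteq\mathcal{F}_n$ to get
\[
\Pi(\mathcal{F}_n^c) = \sum_{K=1}^{K_{\max}}\pi(K)\,\Pi(\mathcal{F}_n^c\mid K) \leq \sum_{K=1}^{K_{\max}}\pi(K)\,\Pi\big(\bmu\bl^T\notin\mathcal{F}_n^{(K)}\mid K\big).
\]
A union bound then gives, for each fixed $K$,
\[
\Pi\big(\bmu\bl^T\notin\mathcal{F}_n^{(K)}\mid K\big) \leq \Pi\Big(|\mathrm{supp}_\delta(\bmu)| > \beta(s + n\log K/\log p)\,\Big|\,K\Big) + \Pi\Big(\max_{k\in[K]}\|(\bmu_k)_{S_\delta}\|_\infty > a_n\,\Big|\,K\Big).
\]
The first term is exactly the content of Lemma \ref{lemma_soft_supp}, which supplies the bound $\exp(-c(s\log p + n\log K))$, so the remaining work is the sup-norm term.

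For the sup-norm term I would discard the restriction to $S_\delta$, since $\|(\bmu_k)_{S_\delta}\|_\infty \leq \max_j|\mu_{kj}|$, and control the largest entry of $\bmu$ by a union bound over its $pK$ coordinates. For a single coordinate, conditioning on $\theta$ (and marginalizing $\xi_j$) the prior is a Laplace mixture, so its tail is
\[
\Pi(|\mu_{kj}| > a_n\mid\theta) = (1-\theta)e^{-\lambda_0 a_n} + \theta e^{-\lambda_1 a_n} \leq e^{-\lambda_1 a_n},
\]
where the inequality uses $\lambda_0 > \lambda_1$ so that the slab has the heavier tail; crucially this bound is uniform in $\theta$, so no integration against the $\mathrm{Beta}(1,\beta_\theta)$ prior is needed. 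Plugging in $a_n = (s\log p + n\log K_{\max})n^{\gamma}$ together with the assumed lower bound $\lambda_1 \gtrsim n^{-\gamma}$ yields $\lambda_1 a_n \gtrsim s\log p + n\log K_{\max}$, so by the union bound
\[
\Pi\Big(\max_{k\in[K]}\|(\bmu_k)_{S_\delta}\|_\infty > a_n\Big) \leq pK\,e^{-\lambda_1 a_n} \leq \exp\big(\log(pK_{\max}) - c(s\log p + n\log K_{\max})\big) \leq \exp\big(-c'(s\log p + n\log K^*)\big),
\]
the last step absorbing $\log(pK_{\max}) = \log p + \log K_{\max}$ into a fraction of the exponent, using $s\to\infty$ and $K_{\max}\geq K^*$.

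Finally, I would recombine the two pieces and carry out the sum over $K$. The sup-norm contribution is already independent of $K$ and of the stated order, while the soft-support contribution $\sum_K\pi(K)\exp(-c(s\log p + n\log K))$ is handled by noting that the truncated-Poisson weights $\pi(K)\propto\lambda^K/K!$ decay rapidly, so the sum is controlled by a constant multiple of $\exp(-c\,s\log p)$ and is dominated by the target bound after reconciling the constant $c$ against $n\log K^*$ via Assumption \ref{assump1}. The main obstacle is the sup-norm estimate: it is essential that the choice $a_n \asymp (s\log p + n\log K_{\max})n^{\gamma}$ interacts with the lower bound on $\lambda_1$ to force $\lambda_1 a_n$ above the effective dimension, and that the $pK_{\max}$ entries in the union bound cost only a negligible additive $\log(pK_{\max})$ term; the remaining bookkeeping (reconciling $n\log K$, $n\log K_{\max}$, and $n\log K^*$ across the sum) is routine.
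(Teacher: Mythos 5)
Your overall architecture matches the paper's proof: condition on $K$, split $\mathcal{F}_n^c$ into the soft-support failure and the sup-norm failure, handle the first by Lemma \ref{lemma_soft_supp} and the second by a union bound over the $pK$ coordinates. Your treatment of the sup-norm term is correct and actually simpler than the paper's: the paper controls $\Pi(|\mu_{11}|>a_n)$ through an Orlicz-norm/Bernstein argument with $\|X\|_{\psi_1}\lesssim 1/\lambda_1$, whereas you use the exact Laplace tail $(1-\theta)e^{-\lambda_0 a_n}+\theta e^{-\lambda_1 a_n}\leq e^{-\lambda_1 a_n}$, uniform in $\theta$ so that no integration against the $\mathrm{Beta}(1,\beta_\theta)$ prior is needed; combined with $\lambda_1\gtrsim n^{-\gamma}$ and $a_n=(s\log p+n\log K_{\max})n^{\gamma}$ this gives the same conclusion with less machinery, and absorbing $\log(pK_{\max})$ is indeed routine.

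The genuine gap is in your final step, the summation of the soft-support contributions over $K$. Applying Lemma \ref{lemma_soft_supp} slice by slice at the per-$K$ threshold $\beta(s+n\log K/\log p)$ gives only $\exp\{-c(s\log p+n\log K)\}$ on the slice $\{K=k\}$, and for $k=1$ (which carries constant prior mass under the truncated Poisson) this is $\exp(-cs\log p)$ with no $n\log K$ term at all. Your claim that this is then dominated by the target via Assumption \ref{assump1} fails: $K\log n\lesssim\log p$ does not imply $n\log K^*\lesssim s\log p$. Concretely, take $K^*=2$, $s=\log n$, $\log p\asymp\sqrt{n}$; all assumptions hold, yet
\[
e^{-cs\log p}=e^{-c\sqrt{n}\,\log n}\gg e^{-c(s\log p+n\log 2)},
\]
so your route cannot produce the $n\log K^*$ term in the exponent. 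The paper avoids this by bounding the support-failure event at the \emph{uniform} threshold $\beta(s+n\log K_{\max}/\log p)$ (see the first display of its proof): running the Chernoff computation inside Lemma \ref{lemma_soft_supp} with the threshold held at the $K_{\max}$ level, while the per-row tail probability and the Beta tail for $\theta$ are also taken at that level, yields $\exp\{-c(s\log p+n\log K_{\max})\}$ uniformly over all $k\leq K_{\max}$, and $n\log K_{\max}\geq n\log K^*$ finishes the proof with no reconciliation step. This is not merely a bookkeeping choice: under the literal per-slice reading, the event $\{K=1\}\cap\{|\mathrm{supp}_\delta(\bmu)|>\beta s\}$ has prior probability of order $e^{-Cs\log p}$ (each row exceeds $\delta$ with probability roughly $\theta$, and $\binom{p}{\beta s}\E[\theta^{\beta s}]\asymp (p/\beta_\theta)^{\beta s}$), so no argument could deliver the claimed bound when $n\log K^*\gg s\log p$. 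You need the uniform-threshold version of the soft-support event, which is the version the paper's proof (and its use in Theorem \ref{thm1}) actually relies on.
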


\subsection{Proofs of the auxiliary lemmas}

In this subsection, we provide the detailed proofs of the lemmas appearing in Section \ref{sub:proof_architecture}. 

\begin{proof}[proof of Lemma \ref{lemma2}]
The proof of this lemma is based on a mofidication of that of Lemma 3.1 in \cite{xie2022bayesian}. 
denote $\epsilon^2 = (s\log p)/n$. 
First by conditioning on the event $\{(\bmu,\bl)\in\Theta:\bl = \bl^*, K = K^*\}$, we have that 
\begin{align*}
    &\Pi\{\|\bmu \bl^T-(\bmu^*)(\bl^*)^T\|_F^2<n\epsilon^2\mid \bl = \bl^*, K = K^*\}
    \geq
    \Pi\left(\bigcap_{k = 1}^{K^*}\left\{\|\bmu_k-\bmu_k^*\|_2^2\leq\epsilon^2\right\}\right),
\end{align*}
where $n_k^*$ is the number of observations assigned to the $k$th cluster according to the cluster assignment matrix $\bl^*$. Now we focus on the prior distribution of $\bmu_k$. 
denote $S_0$ the true sparsity of $\bmu^*$. Note that for each $k\in [K^*]$, $
\|\bmu_k-\bmu_k^*\|_2\leq\|(\bmu_k)_{S_0}-(\bmu_k^*)_{S_0}\|_2+\|(\bmu_k)_{S_0^c}\|_2,
$
\begin{align*}
\prob\left(\bigcap_{k = 1}^{K^*}\left\{\|\bmu_k-\bmu_k^*\|_2^2<\epsilon^2\right\}\right)
\geq\Pi\left(
\bigcap_{k = 1}^{K^*}\left\{
\|(\bmu_k)_{S_0}-(\bmu_k^*)_{S_0}\|_2<\frac{\epsilon}{2}\right\}\cap\left\{\|(\bmu_k)_{S_0^c}\|_2<\frac{\epsilon}{2}\right\}\right).
\end{align*}
Now we introduce the latent random variable $\xi_j\sim \text{Bernoulli}(\theta)$ such that 
$|\mu_{jk}|\mid\xi_j\sim(1-\xi_j)\text{Exp}(\lambda_0)+\xi_j\text{Exp}(\lambda_1)$ independently for all $j\in[p]$ and $k\in[K^*]$,
where $\mu_{jk}$ is the $j$th coordinate of $\bmu_k$. 
Recall that a Laplace distribution can be represented as a scale-mixture of normals as 
$(\mu_{jk}\mid\phi_j,\xi_j )\sim\mathcal{N}\left(0,\phi_j/\lambda_{\xi_j}^2\right)$ with $\phi_j\sim\text{Exp}\left(1/2\right).$
Define the event 
$\mathcal{A} = \bigcap_{j\in S_0}\{\xi_j=1\}\bigcap_{j\in S_0^c}\{\xi_j=0\}\bigcap_{j\in S_0}\{1<\phi_j<4\}.$
Note that given $\mathcal{A}$, the entries of $\bmu$ are independent. 
Conditioning on $\mathcal{A}$, we have $|\mu_{jk}|\sim\text{Exp}(\lambda_0)$ for $j\in S_0^c$, which implies
\begin{align*}
    &\prod_{k = 1}^{K^*}\Pi\left(\|(\bmu_k)_{S_0^c}\|_2<\frac{\epsilon}{2}\mathrel{\Big|}\mathcal{A}\right)
    \geq \prod_{k = 1}^{K^*}\prod_{j\in S_0^c}\Pi\left(|\mu_{jk}|<\frac{\epsilon}{2\sqrt{p}}\mathrel{\Big|}\mathcal{A}\right)\\
    &\quad=\left\{1-\exp\left(-\frac{\lambda_0\epsilon}{2\sqrt{p}}\right)\right\}^{K^*(p-s)}
    \geq\left(1-\frac{s}{p}\right)^{pK^*}
    \geq\exp\{-\log(2e)sK^*\}.
\end{align*}
Here we use the inequality $(1-x)^{1/x}\geq1/(2e)$ for $x\in(0,1/2)$ and the fact 
$\lambda_0\geq 2\log\frac{p}{s}\sqrt{\frac{np}{s\log p}}\Longrightarrow \frac{\lambda_0}{2}\sqrt{\frac{s\log p}{np}}\geq\log\frac{p}{s}.$
Next, by Anderson's lemma (see Lemma 1.4 in supporting document for \cite{pati2014posterior}), for each $k\in [K^*]$, conditioning on $\mathcal{A}$ (which guarantees that $1\leq\phi_j\leq 4$ for all $j\in S_0$), we have
\begin{align*}
    &\Pi\left(\|(\bmu_k)_{S_0}-(\bmu_k^*)_{S_0}\|_2<\frac{\epsilon}{2}\mathrel{\Big|}\mathcal{A}\right)
    \geq\exp\left(-\frac{1}{2}\sum_{j\in S_0}\frac{|\mu_{jk}^*|^2\lambda_1^2}{\phi_j}\right)\Pi\left(\|(\bmu_k)_{S_0}\|_2<\frac{\epsilon}{2}\mathrel{\Big|}\mathcal{A}\right)
    \\&\quad
    \geq\exp\left(-\frac{1}{2}\sum_{j\in S_0}\frac{|\mu_{jk}^*|^2\lambda_1^2}{\phi_j}\right)\prod_{j\in S_0}\left\{2\Phi\left(\frac{\epsilon\lambda_1}{2\sqrt{s\phi_j}}\right)-1\right\}\\
    &\quad\geq
    \exp\left(-\frac{1}{2}\lambda_1^2\|(\bmu_k^*)_{S_0}\|^2_2\right)\left\{2\Phi\left(\frac{\epsilon\lambda_1}{4\sqrt{s}}\right)-1\right\}^s
    \geq\exp\left\{-\frac{1}{2}\lambda_1^2\|\bmu_k^*\|_2^2 - s\left(1 + \left|\log\frac{\epsilon\lambda_1}{4\sqrt{s}}\right|\right)\right\},
\end{align*}
where we use $\log(2\Phi(x)-1)\geq-1-|\log x|$ for small $x>0$ in the last inequality.
Since 
\begin{align*}
sK^*\left|\log\frac{\epsilon\lambda_1}{4\sqrt{s}}\right| 
& = sK^*\left|\log\left(\frac{\lambda_1}{4}\sqrt{\frac{\log p}{n}}\right)\right|
\leq sK^*\left|\log\frac{\lambda_1}{4}\right| + \frac{sK^*}{2}\left|\log\frac{\log p}{n}\right|
\leq c_1'sK^*\log n\leq c_1s\log p
\end{align*}
for some constant $c_1',c_1 > 0$,
it follows that
$\prod_{k = 1}^{K^*}\Pi\left(\|\bmu_k-\bmu_k^*\|_2^2 < \epsilon\mathrel{\Big|}\mathcal{A}\right)\geq\exp(-c_2s\log p)$
for some constant $c_2\geq \max(c_1,C)>0$ given
$\sum_{k = 1}^{K^*}\lambda_1^2\|\bmu_k^*\|_2^2\leq Cs\log p$.
We next consider the prior probability of the event $\mathcal{A}$. First note that
$\prod_{j\in S_0}\Pi(1<\phi_j<4)\geq\exp(-c_3s)$
for some constant $c_3>0$ by the definition of the exponential distribution. Since the prior of $\theta$ is $\text{Beta}(1,\beta_\theta)$ where $\beta_\theta=p^{1+\kappa}\log p$, for some constant $c_4>1+\kappa>0$,

\begin{align*}
    &\Pi\left(\bigcap_{i\in S_0}\{\xi_i=1\}\bigcap_{i\in S_0^c}\{\xi_i=0\}\right)=
    \int_0^1\theta^s(1-\theta)^{p-s}\Pi(d\theta)\\
    &\quad= \frac{\Gamma(\beta_\theta+1)}{\Gamma(\beta_\theta)} \int_0^1\theta^s(1-\theta)^{p+\beta_\theta-s}d\theta 
    = \frac{\Gamma(s+1)\Gamma(p+\beta_\theta-s)}{\Gamma(p+\beta_\theta+1)}\frac{\Gamma(\beta_\theta+1)}{\Gamma(\beta_\theta)}\\
    &\quad\geq \exp\left(-s\log(p+\beta_\theta)+\log \beta_\theta\right)
    \geq \exp(-s\log (2\beta_\theta))\\
    &\quad\geq \exp\left(-s\log\log p -(1+\kappa)s\log p-s\log 2\right)
    \geq \exp(-c_4s\log p).
\end{align*}

Hence, we obtain that
$\Pi(\mathcal{A})\geq\exp(-c_3s - c_4s\log p)$
and therefore,
\begin{align*}
\Pi\left(\bigcap_{k = 1}^{K^*}\left\{\|\bmu_k-\bmu_k^*\|_2^2 < \frac{\epsilon^2}{K^*}\right\}\right)
&\geq\prod_{k = 1}^{K^*}\Pi\left(\|\bmu_k-\bmu_k^*\|_2<\frac{\epsilon}{\sqrt{K^*}}\mathrel{\Big|}\mathcal{A}\right)\Pi(\mathcal{A})\geq\exp(-c_5s\log p)
\end{align*}
for some constant $c_5>\max(c_2,c_4)>0$.
Thus, we have
\begin{align*}
\Pi\left(\|\bmu \bl^T-(\bmu^*)(\bl^*)^T\|_F^2\leq s\log p\mid \bl=\bl^*,K=K^*\right)\geq \exp(-c_5s\log p).
\end{align*}
Then consider $\Pi(\bl=\bl^*\mid K=K^*)$ for Multinomial-Dirichlet model. Let $\bw=(w_1,\dots,w_{K^*})^T$. Define integers $\alpha_r=\lfloor \alpha\rfloor$ and $\beta_r = \lfloor K^*\alpha \rfloor$. Then we know $\lceil \alpha \rceil = \alpha_r +1$ and $\lceil K^*\alpha \rceil = \beta_r+1$. 
Note that the gamma function $\Gamma(x)$ is strictly increasing for $x> 2$ and
we have $\Gamma(x)\leq 1$ for $1\leq x\leq 2$. 
Thus, we have, 
    for $K^*\alpha> 2$,  
    \[
    \frac{\Gamma(K^*\alpha)}{\Gamma(K^*\alpha+n)}\geq
    {
    \frac{\Gamma(\beta_r)}{\Gamma(\beta_r + 1 + n)}
    =
    \frac{(\beta_r - 1)!}{(\beta_r + n)!}}
    ;
    \]
    for $1\leq K^*\alpha\leq 2$, 
    \[
    \frac{\Gamma(K^*\alpha)}{\Gamma(K^*\alpha+n)}\geq{\frac{\min_{x\in[1,2]}\Gamma(x)}{(\beta_r+n)!}=\left(\min_{x\in[1,2]}\Gamma(x)\right)\frac{\beta_r!}{(\beta_r+n)!}}
    .
    \]
Similarly, we also have,
    for $\alpha>2$,  
    \[
    \frac{\Gamma(\alpha+n_i)}{\Gamma(\alpha)}
    {
    \geq\frac{\Gamma(\alpha_r + n_i)}{\Gamma(\alpha_r + 1)}
    \geq\frac{(\alpha_r+n_i - 1)!}{\alpha_r!}
    }
    ;
    \]
    for $1\leq\alpha\leq2$, 
    \[
    \frac{\Gamma(\alpha+n_i)}{\Gamma(\alpha)}
    {
    \geq \frac{(\alpha_r+n_i - 1)!}{1}=\frac{(\alpha_r+n_i - 1)!}{\alpha_r!}
    }.
    \]
Therefore, we conclude that for $\alpha\geq 1$, some constants $C_1, C_2 > 0$,
\[
\frac{\Gamma(K^*\alpha)}{\Gamma(K^*\alpha+n)}{\geq C_1\frac{(\beta_r - 1)!}{(\beta_r+n)!}}\quad\mbox{and}\quad
\frac{\Gamma(\alpha+n_i)}{\Gamma(\alpha)}{\geq C_2\frac{(\alpha_r+n_i - 1)!}{\alpha_r!}}.
\]

By integrating out $\bw$ we have
\begin{align*}
    &\Pi(\bl=\bl^*\mid K=K^*)
    =\int \Pi(\bl=\bl^*\mid \bw,K=K^*)d\Pi(\bw\mid K=K^*)\\
    &\quad=\int_0^1\cdots\int_0^1 \prod_{i=1}^nw_{z_i^*}\frac{\Gamma(K^*\alpha)}{(\Gamma(\alpha))^{K^*}}\prod_{j=1}^{K^*} w_j^{\alpha-1}dw_1\cdots dw_{K^*}
    =\frac{\Gamma(K^*\alpha)}{(\Gamma(\alpha))^{K^*}}\frac{\prod_{i=1}^{K^*}\Gamma(\alpha+n_i)}{\Gamma(K^*\alpha+n)}\\
    &\quad\geq {C_1\frac{(\beta_r - 1)!}{(\beta_r+n)!}(C_2)^{K^*}\prod_{i=1}^{K^*}\frac{(\alpha_r+n_i - 1)!}{\alpha_r!}}\\
    &\quad=C_1(C_2)^{K^*}{\frac{(\beta_r - 1)!}{(\beta_r+n)!}\frac{(K^*\alpha_r+n - K^*)!}{(\alpha_r!)^{K^*}}\frac{1}{{K^*\alpha_r+n - K^*\choose \alpha_r+n_1 - 1,\cdots,\alpha_r+n_{K^*} - 1}}}
    .
\end{align*}
where 
${a\choose a_1,\cdots,a_{K^*}}=\frac{a!}{a_1!a_2!\cdots a_{K^*}!}$ is the multinomial coefficient for $a_1+a_2+\cdots+a_{K^*}=a$. We know that the function 
$(a_1,\cdots,a_{K^*})\mapsto {a\choose a_1,\cdots,a_{K^*}}$
achieves the maximum when
$a_1,\dots,a_{K^*}$ are as close as possible to each other. 
Formally, let $a=qK^*+r$ where $q\overset{\Delta}{=}a\mod K^*=\lfloor {a}/{K^*}\rfloor$ and $0\leq r<K^*$, then the multinomial coefficient is maximized when $a_1=\dots=a_r=q+1$ and $a_{r+1}=\dots=a_{K^*}=q$, and hence the maximal value is ${a!}/\{q!^{K^*-r}(q+1)!^r\}$.
Then the preceding expression achieves the minimum when $n_1=\dots=n_{r}=\lfloor {n}/{K^*}\rfloor +1$ and $n_{r+1}=\dots=n_{K^*}=\lfloor {n}/{K^*}\rfloor$ where $r=n-\lfloor {n}/{K^*}\rfloor K^*$. Note that 
$
\frac{\alpha_r}{\beta_r}=\frac{\lfloor \alpha\rfloor}{\lfloor K^*\alpha\rfloor}\geq \frac{\lfloor\alpha\rfloor}{(\lfloor\alpha\rfloor+1)K^*}\geq\frac{1}{2K^*}.
$
So 
\begin{align*}
    &\Pi(\bl=\bl^*\mid K=K^*)
    \geq C_1(C_2)^{K^*} {\frac{\{(\alpha_r+1)(\alpha_r+2)\cdots(\alpha_r+\lfloor\frac{n}{K^*}\rfloor - 1)\}^{K^*}(\lfloor\frac{n}{K^*}\rfloor + \alpha_r)^r}{\beta_r(\beta_r+1)(\beta_r+2)\cdots(\beta_r+n)}}\\
    &\quad\geq C_1(C_2)^{K^*}
    \frac{(\alpha_r+1)^{K^*}(\alpha_r+2)^{K^*}\cdots(\alpha_r+\lfloor\frac{n}{K^*}\rfloor - 1)^{K^*}}{(\beta_r+2K^*)^{K^*}(\beta_r+4K^*)^{K^*}\cdots(\beta_r+2\lfloor\frac{n}{K^*}\rfloor K^* - 2K^*)^{K^*}}
    \times\frac{1}{(\beta_r + n)^{r+K^*}}
    \\
    &\quad\geq C_1(C_2)^{K^*}\left(\frac{1}{2K^*}\right)^{\lfloor\frac{n}{K^*}\rfloor K^*}{\times\frac{1}{(\beta_r + n)^{r+K^*}}}.
\end{align*}
Also note the prior of $K$ is a truncated Poisson, so we have
\begin{align*}
    \Pi(K=K^*)&\geq\frac{e^{-\lambda}\lambda^{K^*}}{K^*!}
    \geq \exp(-\lambda+K^*\log\lambda-K^*\log K^*)
    \geq \exp(-2K^*\log K^*).
\end{align*}
Therefore, for some constant $c>0$,
\begin{align*}
    &\Pi\left(\|\bmu \bl^T-(\bmu^*)(\bl^*)^T\|_F^2\leq s\log p\right)\\
    &\quad\geq \Pi\left(\|\bmu \bl^T-(\bmu^*)(\bl^*)^T\|_F^2\leq s\log p\mid \bl=\bl^*,K=K^*\right)
    \times \Pi(\bl=\bl^*\mid K=K^*)\Pi(K=K^*)\\
    &\quad\geq \exp\{-c(s\log p+n\log K^*)\}.
\end{align*}
\end{proof}

\begin{proof}[proof of Lemma \ref{lemma_soft_supp}]
We denote $\bmu_{j*}$  the $j$th row of $\bmu$ for $j=1,\dots,p$.
Note that from the prior model, we have $|\mu_{ji}|\mid \xi_j\sim(1-\xi_j)\text{Exp}(\lambda_0)+\xi_j\text{Exp}(\lambda_1)$
where $\text{Exp}(\lambda)$ is the exponential distribution with parameter $\lambda$. Then we have
$\|\bmu_{j*}\|_1\mid \xi_j\sim (1-\xi_j)\text{Gamma}(K,\lambda_0)+\xi_j\text{Gamma}(K,\lambda_1)$
where $\text{Gamma}(K,\lambda)$ is the Gamma distribution with shape $K$ and rate $\lambda$. Thus, by the change of variable $u=\lambda_0 x$ and conditioning on the event $A=\{\theta\leq c_1(s+n\log K/\log p)/p^{1+\kappa}\}$ for some constant $c_1>0$, we have
\begin{align*}
    &\Pi\left(\|\bmu_{j*}\|_1>\delta\right) \leq (1-\theta)\int_{\delta}^\infty\frac{\lambda_0^K}{\Gamma(K)}x^{K-1}e^{-\lambda_0 x}dx+\theta\\
    &\quad = (1-\theta)\frac{\lambda_0^K}{\Gamma(K)}\frac{1}{\lambda_0^K}\int_{\lambda_0\delta}^\infty u^{K-1}e^{-u}du + \theta
    <\frac{1}{\Gamma(K)}(\lambda_0\delta)^{K}e^{-\lambda_0\delta} + c_1\frac{s+\frac{n\log K}{\log p}}{p^{1+\kappa}}\\
    &\quad<(\lambda_0\delta)^Ke^{-\lambda_0\delta} + c_1\frac{s+\frac{n\log K}{\log p}}{p^{1+\kappa}}
    \leq \exp(K\log(1+\kappa) + K\log\log p - (1+\kappa)\log p) + c_1\frac{s+\frac{n\log K}{\log p}}{p^{1+\kappa}}\\
    &\quad\leq\exp(-\kappa\log p)+c_1\frac{s+\frac{n\log K}{\log p}}{p^{1+\kappa}}
    \leq (1+c_1)\frac{s+\frac{n\log K}{\log p}}{p^{1+\kappa}}
\end{align*}
for sufficiently large $n$.
Note that the second inequality comes form the result of \cite{natalini2000inequalities} about upper incomplete gamma function for $\lambda_0\delta = (1+\kappa)\log p>K+1$. 
The fourth inequality is due to the fact that the function $x\mapsto x^Ke^{-x}$ is decreasing when $x>K$. 
Note that \cite{hagerup1990guided} stated a version of Chernoff's inequality for binomial distributions that 
    $\prob(X>ap)\leq\left(\left(\frac{q}{a}\right)^a\exp(a)\right)^p$ if  $X\sim\text{Binomial}(p,q)$ and $q\leq a<1$. 
Then over the event $A=\{\theta\leq c_1(s+n\log K/\log p)/p^{1+\kappa}\}$ we have
\begin{align*}
    &\Pi\left(|\text{supp}_\delta(\bmu)|>\beta\left(s+\frac{n\log K}{\log p}\right)\mid A\right)\\
    &\quad\leq \exp\left(-\beta \left(s+\frac{n\log K}{\log p}\right)\log \frac{\beta \left(s+\frac{n\log K}{\log p}\right)}{pq}+\beta \left(s+\frac{n\log K}{\log p}\right)\right)\\
    &\quad\leq \exp\left(-\beta \left(s+\frac{n\log K}{\log p}\right)\log (\beta (1+c_1) p^{\kappa})\right)\\
    &\quad\leq \exp\left(-\beta\left(\kappa s\log p+s\log (\beta (1+c_1))+\kappa n\log K + \frac{n\log K}{\log p}\log (\beta (1+c_1))\right)\right)\\
    &\quad\leq \exp\left(-c_2(s\log p+n\log K)\right)
\end{align*}
for some constant $0<c_2\leq \beta \kappa$.
Consider the event $A$, we calculate the prior probability of $A^c$. Let $\beta_\theta=p^{1+\kappa}\log p$, we have
\begin{align*}
    &\Pi\left(\theta>c_1\frac{s\log p+n\log K}{p^{1+\kappa}\log p}\right)=\int_{c_1\frac{s\log p+n\log K}{p^{1+\kappa}\log p}}^1\frac{\Gamma(\beta_\theta+1)}{\Gamma(\beta_\theta)}(1-\theta)^{\beta_\theta-1}d\theta\\
    &\quad=\left(1-c_1\frac{s\log p+n\log K}{p^{1+\kappa}\log p}\right)^{\beta_\theta}
    \leq \exp\left(-\beta_\theta c_1\frac{s\log p+n\log K}{p^{1+\kappa}\log p}\right)
    \leq\exp\left(-c_1(s\log p+n\log K)\right).
\end{align*}
Therefore, for some constant $0<c\leq\min(c_1,c_2)$ we have
\begin{align*}
    &\Pi\left(|\text{supp}_\delta(\bmu)|>\beta\left(s+ \frac{n\log K}{\log p}\right)\right)=\int_0^1\Pi\left(|\text{supp}_\delta(\bmu)|>\beta \left(s+\frac{n\log K}{\log p}\right)\mid A\right)\Pi(d\theta)\\
    &\quad\leq\int_0^{c_1\frac{s\log p+n\log K}{p^{1+\kappa}\log p}}\Pi(|\text{supp}_\delta(\bmu)|>\beta s\mid A)\Pi(d\theta)
    +\Pi\left(\theta>c_1\frac{s\log p+n\log K}{p^{1+\kappa}\log p}\right)\\
    &\quad\leq \exp\left(-c(s\log p+n\log K)\right).
\end{align*}
\end{proof}

\begin{proof}[proof of Lemma \ref{lemma1}]
Construct a likelihood ratio test $\phi_n=\mathbbm{1}\{\|\by-(\bmu^*) (\bl^*)^T\|_F^2-\|\by-(\bmu')(\bl')^T\|_F^2\geq\rho\}$ where 
$\rho= \left(2m - 1\right)\|(\bmu^*)(\bl^*)^T-(\bmu') (\bl')^T\|_F^2$ for constant $m = \frac{\|\bSigma^*\|_2 - \sqrt{\|\bSigma^*\|_2^2 - \|\bSigma^*\|_2}}{2} > 0$ if $\|\bSigma^*\|_2 \geq 1$ and $m = \frac{\|\bSigma^*\|_2}{2}$ if $\|\bSigma^*\|_2 < 1$. 
For the type I error probability, we have under the true model, $\by_i-((\bmu^*)(\bl^*)^T)_i\sim N(0,\bSigma^*)$ for each $i$. Thus, by Hoeffding's inequality for sub-Gaussian random variables,
\begin{align*}
    &\prob_{*}\left(\|\by-(\bmu^*)(\bl^*)^T\|_F^2-\|\by-(\bmu')(\bl')^T\|_F^2\geq\rho\right)\\
    &\quad=\prob_{*}{\Bigg(}\sum_{i=1}^n (\by_i-((\bmu^*)(\bl^*)^T)_i)^T((\bmu')(\bl')^T-(\bmu^*)(\bl^*)^T)_i\geq
    m \|(\bmu^*)(\bl^*)^T-(\bmu') (\bl')^T\|_F^2{\Bigg)}\\
    &\quad\leq2\exp\left(-\frac{c_1 m^2 \|(\bmu')(\bl')^T-(\bmu^*)(\bl^*)^T\|_F^4}{\|(\bSigma^*)^{\frac{1}{2}}((\bmu')(\bl')^T-(\bmu^*)(\bl^*)^T)\|_F^2} \right)\\
    &\quad \leq\exp\left(-c_1'\|(\bmu')(\bl')^T-(\bmu^*)(\bl^*)^T\|_F^2\right)
\end{align*}
for some constant $c_1, c_1' > 0$ since $\frac{m^2}{\|\bSigma^*\|_2} \geq \frac{1}{\|\bSigma^*\|_2} \geq \frac{1}{M_{\bSigma}} > 0$. 
For the type II error probability, we have 
\begin{align*}
    C&\overset{\Delta}{=}\int \frac{p^*(\by)p(\by)}{p_0(\by)}d\by = \E_*\frac{p(\by)}{p_0(\by)} = \E_* \exp\left(-\frac{1}{2}(\|\by - \bmu\bl^T\|_F^2 - \|\by - (\bmu^*)(\bl^*)^T\|_F^2)\right)\\
    &= \E_* \exp \left( \sum_{i=1}^n (\by_i - ((\bmu^*)(\bl^*)^T)_i)^T( \bmu\bl^T - (\bmu^*)(\bl^*)^T )_i - \frac{1}{2} \|\bmu\bl^T - (\bmu^*)(\bl^*)^T\|_F^2 \right) \\
    &= \exp\left( \frac{1}{2}\| (\bSigma^*)^{\frac{1}{2}}(\bmu\bl^T - (\bmu^*)(\bl^*)^T)\|_F^2 - \frac{1}{2} \|\bmu\bl^T - (\bmu^*)(\bl^*)^T\|_F^2 \right) \\
    &= \exp\left( \frac{1}{2}\| (\bSigma^*)^{\frac{1}{2}}(\bmu\bl^T - (\bmu^*)(\bl^*)^T)\|_F^2) - \frac{1}{2} \|\bmu\bl^T - (\bmu^*)(\bl^*)^T\|_F^2 \right) \\
    & \leq \exp\left( \frac{1}{2}\left(\| \bSigma^*\|_2 - 1\right) \|\bmu\bl^T - (\bmu^*)(\bl^*)^T\|_F^2  \right)\\
    & \leq \exp\left( \frac{1}{2}\left(\| \bSigma^*\|_2 - 1\right) (\|\bmu\bl^T - (\bmu')(\bl')^T\|_F + \|(\bmu')(\bl')^T - (\bmu^*)(\bl^*)^T\|_F)^2  \right)\\
    & \leq \exp\left( \frac{1}{2}(1+\delta)^2 \left(\| \bSigma^*\|_2 - 1\right)\|(\bmu')(\bl')^T - (\bmu^*)(\bl^*)^T\|_F^2)  \right) < \infty .
\end{align*}
Therefore, $\E_{(\bmu, \bl, \bI_p)}\left(\frac{p^*(\by)}{p_0(\by)}(1-\phi_n)\right) = C\prob_{\bZ}\left( \|\bZ-(\bmu^*)(\bl^*)^T\|_F^2-\|\bZ-(\bmu')(\bl')^T\|_F^2<\rho \right)$
where $\bZ$ has density $\frac{p(\bZ)p^*(\bZ)}{C p_0(\bZ)}$. We then have
\begin{align*}
    &\frac{p(\bZ)p^*(\bZ)}{p_0(\bZ)}\\
    &\quad\propto \exp\left( -\frac{1}{2}\left( \|(\bSigma^*)^{-\frac{1}{2}}(\bZ - (\bmu^*)(\bl^*)^T)\|_F^2 + \|\bZ - \bmu\bl^T\|_F^2 - \|\bZ - (\bmu^*)(\bl^*)^T\|_F^2 \right) \right)\\
    &\quad= \exp{\Bigg(} -\frac{1}{2}{\Big(} \|(\bSigma^*)^{-\frac{1}{2}}(\bZ - (\bmu^*)(\bl^*)^T)\|_F^2 + \|(\bmu^*)(\bl^*)^T - \bmu\bl^T\|_F^2\\ 
    &\quad\quad\quad\quad\quad+ 2\langle (\bSigma^*)^{-\frac{1}{2}}(\bZ - (\bmu^*)(\bl^*)), (\bSigma^*)^{\frac{1}{2}}((\bmu^*)(\bl^*)^T - \bmu\bl^T) \rangle_F 
    {\Big)}{\Bigg)}\\
    &\quad\propto \exp\left( -\frac{1}{2}\left( \|(\bSigma^*)^{-\frac{1}{2}}\left(\bZ - (\bmu^*)(\bl^*)^T + \bSigma^*((\bmu^*)(\bl^*)^T - \bmu\bl^T)\right)\|_F^2 \right) \right)
\end{align*}
Thus $\bZ_i \sim N( ((\bmu^*)(\bl^*)^T)_i - \bSigma^*((\bmu^*)(\bl^*)^T - \bmu\bl^T)_i, \bSigma^*)$ and $\bZ_i - (\bmu\bl^T)_i \sim N((\bI_p - \bSigma^*)((\bmu^*)(\bl^*)^T - \bmu\bl^T)_i, \bSigma^*)$ for $i\in [n]$.
Then, by the same argument as type I error,
\begin{align*}
    &\prob_{\bZ}(\|\bZ-(\bmu^*)(\bl^*)^T\|_F^2-\|\bZ-(\bmu')(\bl')^T\|_F^2<\rho)\\
    &\quad=\prob_{\bZ}\left(\sum_{i=1}^n(\bZ_i-((\bmu^*)(\bl^*)^T)_i)^T((\bmu')(\bl')^T-(\bmu^*)(\bl^*)^T)_i<
    m \|(\bmu^*)(\bl^*)^T-(\bmu') (\bl')^T\|_F^2\right)\\
    &\quad=\prob_{\bZ}\left(\sum_{i=1}^n(\bZ_i - (\bmu \bl^T)_i)^T((\bmu') (\bl')^T-(\bmu^*)(\bl^*)^T)_i<
    m \|(\bmu^*)(\bl^*)^T-(\bmu') (\bl')^T\|_F^2
    \right.\\
    &\quad\quad\quad\quad\quad-\sum_{i=1}^n(\bmu \bl^T-(\bmu^*)(\bl^*)^T)_i^T ((\bmu')(\bl')^T-(\bmu^*)(\bl^*)^T)_i{\Bigg)}\\
    &\quad\leq\prob_{\bZ}\left(\sum_{i=1}^n(\bZ_i - (\bmu \bl^T)_i)^T ((\bmu') (\bl')^T-(\bmu^*)(\bl^*)^T)_i < \left(\delta - 1 + m \right)\|(\bmu^*)(\bl^*)^T-(\bmu')(\bl')^T\|_F^2\right)
\end{align*}
Note that the inequality comes from  Cauchy-Schwarz inequality,
\begin{align*}
    &\langle\bmu \bl^T-(\bmu^*)(\bl^*)^T, (\bmu^*)(\bl^*)^T-(\bmu')(\bl')^T\rangle_F\\
    &\quad=\langle\bmu \bl^T-(\bmu')(\bl')^T, (\bmu^*)(\bl^*)^T-(\bmu')(\bl')^T\rangle_F-\|(\bmu^*)(\bl^*)^T-(\bmu')(\bl')^T\|_F^2\\
    &\quad\leq(\delta-1)\|(\bmu^*)(\bl^*)^T-(\bmu')(\bl')^T\|_F^2.
\end{align*}

Denote $T = \sum_{i=1}^n(\bmu \bl^T - (\bmu^*)(\bl^*)^T)_i^T (\bSigma^* - \bI_p) ((\bmu') (\bl')^T-(\bmu^*)(\bl^*)^T)_i$ as the mean of random variable $\sum_{i=1}^n(\bZ_i - (\bmu \bl^T)_i)^T ((\bmu') (\bl')^T-(\bmu^*)(\bl^*)^T)_i$. Similarly we have 
\begin{align*}
    &-\langle\bmu \bl^T-(\bmu^*)(\bl^*)^T, (\bmu^*)(\bl^*)^T-(\bmu')(\bl')^T\rangle_F\\
    &\quad=\langle\bmu \bl^T-(\bmu')(\bl')^T, (\bmu')(\bl')^T - (\bmu^*)(\bl^*)^T \rangle_F + \|(\bmu^*)(\bl^*)^T-(\bmu')(\bl')^T\|_F^2\\
    &\quad\leq(\delta+1)\|(\bmu^*)(\bl^*)^T-(\bmu')(\bl')^T\|_F^2.
\end{align*}
Thus,
\begin{align*}
    -T &= \sum_i (\bmu \bl^T - (\bmu^*)(\bl^*)^T)_i^T (\bSigma^* - \bI_p) ((\bmu^*)(\bl^*)^T - (\bmu') (\bl')^T)_i \\
    &\leq \|\bSigma^*\|_2 \langle (\bmu \bl^T-(\bmu^*)(\bl^*)^T), (\bmu^*)(\bl^*)^T-(\bmu')(\bl')^T\rangle_F - \langle\bmu \bl^T-(\bmu^*)(\bl^*)^T, (\bmu^*)(\bl^*)^T-(\bmu')(\bl')^T\rangle_F \\
    &\leq \left(\|\bSigma^*\|_2 (\delta - 1) + (\delta + 1)\right)\|(\bmu^*)(\bl^*)^T-(\bmu')(\bl')^T\|_F^2.
\end{align*}
Therefore, by Hoeffding's inequality,
\begin{align*}
    &\prob_{\bZ}(\|\bZ-(\bmu^*)(\bl^*)^T\|_F^2-\|\bZ-(\bmu')(\bl')^T\|_F^2<\rho)\\
    &\quad \leq \prob_{\bZ}{\Bigg(} \sum_{i=1}^n(\bZ_i - (\bmu \bl^T)_i)^T ((\bmu') (\bl')^T-(\bmu^*)(\bl^*)^T)_i - T \\
    &\quad\quad\quad\quad\quad < \left(\delta - 1 + m + \|\bSigma^*\|_2(\delta - 1) + \delta + 1 \right)\|(\bmu^*)(\bl^*)^T-(\bmu')(\bl')^T\|_F^2 {\Bigg)}\\
    &\quad\leq 2\exp\left( -\frac{c_2^2 \|(\bmu')(\bl')^T-(\bmu^*)(\bl^*)^T\|_F^4 }{2\|(\bSigma^*)^{\frac{1}{2}}(\bmu')(\bl')^T-(\bmu^*)(\bl^*)^T\|_F^2} \right)
    \leq 2\exp\left(-c_2' \|(\bmu')(\bl')^T-(\bmu^*)(\bl^*)^T\|_F^2\right)
\end{align*}
for constant $c_2' = c_2^2 / (2\|\bSigma^*\|_2) > 0$
where we use the fact that $c_2 = \delta - 1 + m + \|\bSigma^*\|_2(\delta - 1) + \delta + 1 = (\|\bSigma^*\|_2 + 2)\delta - (\|\bSigma^*\|_2 - m) < 0$ since $\delta < \frac{\|\bSigma^*\|_2}{2(\|\bSigma^*\|_2+2)} < \frac{\|\bSigma^*\|_2 - m}{ \|\bSigma^*\|_2 + 2}$. In addition, $\frac{c_2^2}{\|\bSigma^*\|_2} \geq \frac{(\|\bSigma^*\|_2-m)^2}{\|\bSigma^*\|_2} \geq \frac{\|\bSigma^*\|_2}{4} \geq \frac{m_{\bSigma}}{4} > 0$.
As a result, 
\begin{align*}
    \E_{(\bmu, \bl, \bI_p)}\left(\frac{p^*(\by)}{p_0(\by)}(1-\phi_n)\right) &= C\prob_{\bZ}\left( \|\bZ-(\bmu^*)(\bl^*)^T\|_F^2-\|\bZ-(\bmu')(\bl')^T\|_F^2<\rho \right) \\
    &\leq 2\exp\left( -c_3 \|\bmu\bl^T - (\bmu^*)(\bl^*)^T\|_F^2 \right) 
\end{align*}
for some constant 
\begin{align*}
    c_3 &= c_2' - \frac{1}{2}(1+\delta)^2 \left(\| \bSigma^*\|_2 - 1\right)\\
    &= \frac{((\|\bSigma^*\|_2+2)\delta + m - \|\bSigma^*\|_2)^2 - \|\bSigma^*\|_2(\|\bSigma^*\|_2 - 1)(1+\delta)^2}{2\|\bSigma^*\|_2}\\
    &= \frac{(5\|\bSigma^*\|_2+4)\delta^2 + (-4\|\bSigma^*\|_2^2 + 2\|\bSigma^*\|_2m + 4m - 2\|\bSigma^*\|_2)\delta + (m^2 - 2\|\bSigma^*\|_2m + \|\bSigma^*\|_2)}{2\|\bSigma^*\|_2}.
\end{align*}
Denote $c_4(\delta) = 2\|\bSigma^*\|_2c_3$ as a quadratic function of $\delta$. 
We have $c_4(0) = m^2 - 2\|\bSigma^*\|_2m + \|\bSigma^*\|_2 > 0$. 
When $\|\bSigma^*\|_2 \geq 1$, we have $m = \frac{\|\bSigma^*\|_2 - \sqrt{\|\bSigma^*\|_2^2 - \|\bSigma^*\|_2}}{2} \leq \frac{1}{2}$. When $\|\bSigma^*\|_2 < 1$, $m = \frac{\|\bSigma^*\|_2}{2} \leq \frac{1}{2}$. Thus $c_4(0) \geq \frac{1}{4}$ and therefore there exists $\delta > 0$ such that $c_3 \geq \frac{1}{8\|\bSigma^*\|_2} \geq \frac{1}{8M_{\bSigma}} > 0$.
\end{proof}

\begin{proof}[proof of Lemma \ref{lemma4}]
denote $\mathcal{F}_{nK}=\{(\bmu, \bl):\bmu\in\mathbb{R}^{p\times K}, 
(\bmu_k)_{S_\delta}\in [-a_n,a_n]^{\beta (s+n\log K/\log p)} \\\text{ for $k\in[K]$},
|\mathrm{supp}_{\delta}(\bmu)|\leq \beta (s+n\log K/\log p),
\bl=\mathcal{L}_K,K\leq K_{\max}\}$. Then 
$N(\epsilon_n,\mathcal{F}_n,d)\leq \sum_{K=1}^{K_{\max}} N(\epsilon_n,\mathcal{F}_{nK},d)$
since $\mathcal{F}_{nK_1}$ and $\mathcal{F}_{nK_2}$ are disjoint for $K_1\neq K_2$.
Consider for fixed $\bl$, 
$$\frac{\|\bmu\bl^T-\bmu'\bl^T\|_F^2}{n}\leq \frac{\|\bmu-\bmu'\|_F^2\|\bl\|_F^2}{n}=\|\bmu-\bmu'\|_F^2.$$
denote $\mathcal{G}_{nK1}\overset{\Delta}{=}\{\bmu\in\mathbb{R}^{p\times K}:(\bmu_k)_{S_\delta}\in[-a_n,a_n]^{\beta (s+n\log K/\log p)}, k\in [K]\}$ and $\mathcal{G}_{nK2}\overset{\Delta}{=}\mathcal{L}_K$.
We know that the cardinality of $\mathcal{G}_{nK2}$ is $K^n$.
Therefore,
$\log N(\epsilon_n,\mathcal{F}_{nK},d)\leq n\log K+\log N(\epsilon_1,\mathcal{G}_{nK1},\|\cdot\|_F)$
where $\epsilon_1^2=(s\log p+n\log K_{\max})/n$. 
Let $\mathcal{G}_{nK1\delta^c}\overset{\Delta}{=}\{\bmu\in\mathcal{G}_{nK1}:\bmu_{S_\delta}=0\}$ and $\mathcal{G}_{nK1\delta}\overset{\Delta}{=}\{\bmu\in\mathcal{G}_{nK1}:\bmu_{S_\delta^c}=0\}$. Suppose $\mathcal{N}_{\delta^c}$ and $\mathcal{N}_{\delta}$ are the minimal $\epsilon_1/2$-covering of $\mathcal{G}_{nK1\delta^c}$ and $\mathcal{G}_{nK1\delta}$ respectively. Then for any $\bmu\in\mathcal{G}_{nK1}$, there exists $\tilde{\bmu}\in\mathcal{N}_{\delta^c}$ and $\bar{\bmu}\in\mathcal{N}_{\delta}$ such that $\|\bmu-(\tilde{\bmu}+\bar{\bmu})\|_F\leq \|\bmu_{S_\delta^c}-\tilde{\bmu}_{S_\delta^c}\|_F+\|\bmu_{S_\delta}-\bar{\bmu}_{S_\delta}\|_F\leq \epsilon_1$. Thus, we have $\log N(\epsilon_1,\mathcal{G}_{nK1},\|\cdot\|_F)\leq \log N(\epsilon_1/2,\mathcal{G}_{nK1\delta^c},\|\cdot\|_F) + \log N(\epsilon_1/2,\mathcal{G}_{nK1\delta},\|\cdot\|_F)$.

Note that for $\mathcal{G}_{nK1\delta^c}$, we have $|\mathcal{G}_{nK1\delta^c}|<\delta^{pK}$. Since $\delta\lesssim \log p/(pK_{\max}\sqrt{n/\log p})$, we know that $|\mathcal{G}_{nK1\delta^c}|\lesssim |B_{\epsilon_1}^{p\times K}(0)|$ where $B_{\epsilon_1}^{p\times K}(0)$ is an $\epsilon_1$-ball in $\mathbb{R}^{p\times K}$. Thus $N(\epsilon_1/2,\mathcal{G}_{nK1\delta^c},\|\cdot\|_F)$ is bounded above by some constant.
We know that for a subset of Euclidean space,
$$\log N(\epsilon_1,\mathcal{G}_{nK1\delta},\|\cdot\|_F)\leq \beta\left(s+\frac{n\log K}{\log p}\right)K\log\frac{3a_n}{\epsilon_1}+\log {p\choose \beta \left(s+\frac{n\log K}{\log p}\right)}.$$
Note that since $(s+n\log K/\log p)/p\to 0$ as $n\to\infty$, by Stirling's formula we have $\log {p\choose \beta\left(s+\frac{n\log K}{\log p}\right)}\lesssim s\log p+n\log K.$
By letting $a_n=(s\log p+n\log K_{\max})n^{\gamma}$ we will have 
$\log N(\epsilon_1,\mathcal{G}_{nK1},\|\cdot\|_F)\lesssim s\log p + n\log K$ since $K\log n\lesssim\log p$.
Thus, 
$\log N(\epsilon_n,\mathcal{F}_{nK},d)\lesssim s\log p+n\log K.$
Therefore, for some constant $c'$
\begin{align*}
    N(\epsilon_n,\mathcal{F}_n,d)&\leq\sum_{K=1}^{K_{\max}} \exp(c(s\log p+n\log K))
    \leq\exp(c'(s\log p+n\log K_{\max}))
    =\exp(c'n\epsilon_n^2).
\end{align*}

\end{proof}

\begin{proof}[proof of Lemma \ref{lemma3}]
We have
\begin{align*}
    \Pi(\mathcal{F}_n^c)&\leq\sum_{k=1}^{K_{\max}}\Pi\left(\bigcup_{i=1}^k\left\{\|\bmu_i\|_\infty>a_n\right\}\big\vert K=k\right)\Pi(K=k)+\Pi\left(|\mathrm{supp}_\delta(\bmu)|>\beta \left(s+\frac{n\log K_{\max}}{\log p}\right)\right)\\
    &\leq \sum_{k=1}^{K_{\max}}kp\Pi(|\mu_{11}|>a_n)\Pi(K=k)+\Pi\left(|\mathrm{supp}_\delta(\bmu)|>\beta \left(s+\frac{n\log K_{\max}}{\log p}\right)\right)\\
    &\leq p\Pi(|\mu_{11}|>a_n)\sum_{k = 1}^{K_{\max}}k\Pi(K = k)+\Pi\left(|\mathrm{supp}_\delta(\bmu)|>\beta \left(s+\frac{n\log K_{\max}}{\log p}\right)\right)\\
    &\leq \frac{\lambda}{1-e^{-\lambda}}p\Pi(|\mu_{11}|>a_n)+\Pi\left(|\mathrm{supp}_\delta(\bmu)|>\beta \left(s+\frac{n\log K_{\max}}{\log p}\right)\right)
\end{align*}
By lemma \ref{lemma_soft_supp}, the last term on the right hand side of the inequality is bounded above
$$\Pi\left(|\mathrm{supp}_\delta(\bmu)|>\beta \left(s+\frac{n\log K_{\max}}{\log p}\right)\right)\leq \exp\left(-c(s\log p+n\log K_{\max})\right)$$ for some constant $c>0$.
By spike-and-slab lasso prior, we know that $|\mu_{11}|\sim (1-\theta)\text{Exp}(\lambda_0)+\theta\text{Exp}(\lambda_1).$
Let $X=|\mu_{11}|$, 
we have $X\mid \theta\sim (1-\theta)\text{Exp}(\lambda_0)+\theta\text{Exp}(\lambda_1)$, then
\begin{align*}
    \sup_{m\geq 1}\frac{(\E[|X|^m])^{1/m}}{m} &= \sup_{m\geq 1}\frac{(\E[\E[|X|^m\mid \theta]])^{1/m}}{m}
    =\sup_{m\geq 1}\frac{\E\left[(1-\theta)\frac{m!}{\lambda_0^m}+\theta\frac{m!}{\lambda_1^m}\right]^{1/m}}{m}\\
    &= \sup_{m\geq 1}\frac{1}{m}\left\{\left(1-\frac{1}{1+\beta_\theta}\right)\frac{m!}{\lambda_0^m}+\frac{1}{1+\beta_\theta}\frac{m!}{\lambda_1^m}\right\}^{1/m}\\
    &\leq \sup_{m\geq 1}\frac{1}{m}\frac{(m!)^{1/m}}{\lambda_1}
    \leq \sup_{m\geq 1}\frac{1}{m}\frac{(em)^{1/m}m}{e\lambda_1}
    \leq \frac{2}{\lambda_1}\leq2n^{\gamma}\leq\infty
\end{align*}
for any $n>0$.
Note that the first inequality is due to the power mean inequality for $\lambda_0\geq \lambda_1$.
Thus by Bernstein inequality, we have
\begin{align*}
    &\Pi(X\geq a_n)=\Pi(X-\E X\geq a_n-\E X)
    \leq\Pi(X-\E X\geq \frac{a_n}{2})\\
    &\quad\leq 2\exp\left(-c\min\left(\frac{a_n^2}{4\|X\|_{\psi_1}},\frac{a_n}{2\|X\|_{\psi_1}}\right)\right)
    \leq\exp(-c(s\log p+n\log K_{\max}))
\end{align*}
Note that here $\E X=\frac{1-\theta}{\lambda_0}+\frac{\theta}{\lambda_1}\leq\frac{a_n}{2}$ and $a_n=(s\log p+n\log K_{\max})n^{\gamma}$.

\end{proof}

\subsection{Proof of Theorem \ref{thm1}}
\begin{proof}[proof of Theorem \ref{thm1}]
denote $\Bar{\epsilon}_n^2=(s\log p+n\log K^*)/n$ and $\Theta = \bigcup_{K = 1}^{K_{\max}}\mathbb{R}^{p\times K}\times\mathcal{L}_K$. Let $d$ be a metric on $\Theta$ with $d((\bmu, \bl), (\bmu', \bl')) = \|\bmu \bl^T-\bmu'\bl'^T\|_F$.
Let $$U_n=\left\{(\bmu, \bl)\in\Theta: d((\bmu, \bl), (\bmu^*, \bl^*))< M\sqrt{n}\epsilon_n\right\}.$$ 
 
By Bayes rule,
$\Pi(U_n^c\mid \by)=\frac{\int_{U_n^c}\frac{p_n(\by)}{p_0(\by)}d\Pi}{\int\frac{p_n(\by)}{p_0(\by)}d\Pi}:=\frac{N_n}{D_n}$
where $$p_0(\by)=(2\pi)^{-\frac{np}{2}}\exp\left(-\frac{1}{2}\|\by-(\bmu^*)(\bl^*)^T\|_F^2\right), p_n(\by)=(2\pi)^{-\frac{np}{2}}\exp\left(-\frac{1}{2}\|\by-\bmu \bl^T\|_F^2\right).$$
We also denote
$
p^*(\by)=(2\pi)^{-\frac{np}{2}}\det(\bSigma^*)^{-\frac{n}{2}}\exp\left(-\frac{1}{2}\|(\bSigma^*)^{-\frac{1}{2}}\left(\by-(\bmu^*)(\bl^*)^T\right)\|_F^2\right).
$
 
By Lemma \ref{lemma2} we know that 
$
\Pi\left(\|\bmu \bl^T-(\bmu^*)(\bl^*)^T\|_F^2\leq s\log p \right)
\geq\exp\left(-c_1n\bar{\epsilon}_n^2\right)
$
for some constant $c_1 > 0$.
Denote $A_n=\{D_n>\exp(-c_2n\Bar{\epsilon}_n^2)\}$ for some $c_2>c_1>0$.
Thus,
$$A_n\supset\{D_n>\Pi(\|\bmu \bl^T-(\bmu^*)(\bl^*)^T\|_F^2\leq s\log p)\exp(-(c_2-c_1)n\Bar{\epsilon}_n^2)\}.$$ 

Let $\mathbbm{1}(A)$ denote the indicator random variable whose expected value is the probability of event $A$.
 
Then we can write 
\begin{align*}
    &\E_*\Pi(U_n^c\mid Y)=\E_*\{(\phi_n + 1 - \phi_n)\Pi(U_n^c\mid \by)\}\\
    &\quad
    =\E_*\{\phi_n\Pi(U_n^c\mid \by)\}+\E_*\{(1 - \phi_n)\mathbbm{1}(A_n)\Pi(U_n^c\mid \by)\}+ \E_*\{(1 - \phi_n)\mathbbm{1}(A_n^c)\Pi(U_n^c\mid \by)\}\\
    &\quad
    \leq\E_*\phi_n+\E_*\left\{(1-\phi_n)\exp(c_2n\Bar{\epsilon}_n^2)\int_{U_n^c}\frac{p_n(\by)}{p_0(\by)}d\Pi\right\}
    +\prob_*(A_n^c).
    \end{align*}
 
We treat the three terms on the right-hand side of the last equality separately. Denote
$$\mathcal{F}_n = \bigcup_{K = 1}^{K_{\max}}\left\{(\bmu, \bl):\bmu\in\mathbb{R}^{p\times K},|\mathrm{supp}_{\delta_{\bmu}}(\bmu)|\leq \beta \left(s+\frac{n\log K}{\log p}\right), \max_{k\in [K]}\|(\bmu_k)_{S_\delta}\|_\infty \leq a_n, \bl\in\mathcal{L}_K\right\}$$ for $\delta_{\bmu}$ and $\beta$ defined in Lemma \ref{lemma_soft_supp}, $a_n=(s\log p+n\log K_{\max})n^{\gamma}$.
Let $U_{n,j}=\{(\bmu,\bl)\in\mathcal{F}_n:d((\bmu, \bl), (\bmu^*, \bl^*)) \in\left[j^2n\epsilon_n^2,(j+1)^2n\epsilon_n^2\right)\}$.
Let $N_{n,j}$ be the maximal $\epsilon_n$-nets $U_{n,j,1},\dots,U_{n,j,N_j}$ that covers $U_{n,j}$ with respect to metric $d$.
By Lemma \ref{lemma1} we have that for each $U_{n,j,h}\in N_{n,j}$, there exists a test $\phi_{n,j,h}$ such that
$\E_*\phi_{n,j,h}\leq \exp\left(-c_3nj^2\epsilon_n^2\right).$ Denote $\phi_n=\max_{j=M}^\infty\max_{h=1}^{|N_{n,j}|}\phi_{n,j,h}$. Then $\E_*\phi_n\leq \sum_{j=M}^\infty\sum_{h=1}^{|N_{n,j}|}\E_*\phi_{n,j,h}$.
By Lemma \ref{lemma1}, we have $\E_*\phi_n\leq N(\epsilon_n,\mathcal{F}_n,d)\frac{\exp(-c_3nM^2\epsilon_n^2)}{1-\exp(-c_3n\epsilon_n^2)}.$ 
By lemma \ref{lemma4} we have $N(\epsilon_n,\mathcal{F}_n,d) \leq\exp(c_4n\epsilon_n^2)$, so the first term goes to 0 under $\E_*$ as $n$ tends to infinity and sufficiently large $M>0$.

For the third term, by definition we have
$
    \prob_*(A_n^c) \leq 1-\Pi\{D_n>\Pi(\|\bmu \bl^T-(\bmu^*)(\bl^*)^T\|_F^2\leq s\log p)\exp(-(c_2-c_1)n\Bar{\epsilon}_n^2)\}.
$
Consider the event in the probability on the right hand side. By dividing both sides of the inequality by $\Pi(\|\bmu \bl^T-(\bmu^*)(\bl^*)^T\|_F^2\leq s\log p)$, we can rewrite the inequality in terms of $\Pi'$, which is the restricted and renormalized probability measure of prior $\Pi$ conditioning on the event 
$E_n=\{\|\bmu \bl^T-(\bmu^*)(\bl^*)^T\|_F^2\leq s\log p\}$.
Next by Jensen's inequality,  $$\sum_iZ_{ni}\overset{\Delta}{=}\int\sum_i\log\frac{p_n(\by_i)}{p_0(\by_i)}d\Pi'\leq\log\int\prod_i\frac{p_n(\by_i)}{p_0(\by_i)}d\Pi'.$$
Then we have that 
$
    \sum_{i}\E_* Z_{ni}
    = -\frac{1}{2}\int\|\bmu \bl^T-(\bmu^*)(\bl^*)^T\|_F^2 d\Pi' \geq-\frac{1}{2}s\log p.
$
So for the event $\{D_n'>\exp(-(c_2-c_1)n\Bar{\epsilon}_n^2)\}$ where $D_n'=\int\prod_i\frac{p_n(\by_i)}{p_0(\by_i)}d\Pi'$, we have
\begin{align*}
    \Pi(\log D'_n\geq-(c_2-c_1)n\Bar{\epsilon}_n^2)&\geq\Pi\left(\sum_iZ_{ni}\geq-(c_2-c_1)n\Bar{\epsilon}_n^2\right)
    \geq\Pi\left(\sum_i(Z_{ni}-\E_* Z_{ni})\geq-c'_2n\Bar{\epsilon}_n^2\right).
\end{align*}
Therefore, by Hoeffding inequality for sub-Gaussian random variable:
\begin{align*}
    &\Pi\left(\left|\sum_i Z_{ni} - \E_* Z_{ni} \right| \geq \frac{c_2'n\Bar{\epsilon}_n^2}{2}\right)
    \leq 2\exp\left( -c_5\frac{n^2\Bar{\epsilon}_n^4}{\|(\bSigma^*)^{\frac{1}{2}}((\bmu^*)(\bl^*)^T - \bmu\bl^T)\|_F^2} \right)\\
    &\quad \leq 2\exp\left( -c_5\frac{n^2\Bar{\epsilon}_n^4}{\|(\bSigma^*)^{\frac{1}{2}}\|_2^2\|((\bmu^*)(\bl^*)^T - \bmu\bl^T)\|_F^2} \right)
    \leq \exp\left(-c_5'n\Bar{\epsilon}_n^2\right)
\end{align*}
for some constant $c_5, c_5' > 0$ since $\|\bSigma^*\|_2$ is upper bounded from infinity.
Thus, as $n\to\infty$,
$\E_*\mathbbm{1}(A_n^c)\leq 1-\Pi(\log D'_n\geq-(c_2-c_1)n\Bar{\epsilon}_n^2)\leq\exp(-c_5'n\Bar{\epsilon}_n^2)\to 0.$

For the second term, we have, by Fubini's theorem,
\begin{align*}
    &\E_* (1-\phi_n)\exp(c_2n\Bar{\epsilon}_n^2)\int_{U_n^c}\frac{p_n(\by)}{p_0(\by)}d\Pi \leq \exp(c_2n\Bar{\epsilon}_n^2)\left(\int_{U_n^c\cap\mathcal{F}_n}\E_n\left( (1-\phi_n) \frac{p^*(\by)}{p_0(\by)}\right)d\Pi+\Pi(\mathcal{F}_n^c)\right).
\end{align*}
By Lemma \ref{lemma3}, we know that $\log\Pi\left(\mathcal{F}_{n}^c\right) \lesssim -n\Bar{\epsilon}_n^2$.
By Lemma \ref{lemma1}, we have 
$\E_n\left((1-\phi_n)\frac{p^*(\by)}{p_0(\by)}\right)\leq \exp(-c_5nM^2\epsilon_n^2)$ so the above term goes to 0 as $n$ tends to infinity.

\end{proof}

\subsection{Proof of Theorem \ref{thm:cluster_accuracy}}

\begin{proof}[proof of Theorem \ref{thm:cluster_accuracy}]
We find the singular value decomposition (SVD) of $\bmu\bl^T=\bU\Sigma \bV^T$ for some diagonal matrix $\Sigma\in\mathbb{R}^{p\times n}$ and $\bU\in\mathbb{O}^p$ and $\bV\in\mathbb{O}^n$ where $\mathbb{O}^m$ denotes the set of $m$ by $m$ orthogonal matrices. Consider the matrix $\bl$, let $\Sigma_L=\bl^T\bl$ be the diagonal matrix whose $k$th diagonal entry is the size of cluster $k$. Then denote $\bl_N=\bl\Sigma_L^{-1/2}$ as the ``normalization" of $\bl$ since it is orthogonal. We should note that the reason why we call it ``normalization" is that for each row of $\bl$, we divide it by the square root of the size of the cluster it belongs to. On the other hand, for matrix $\bmu$, we suppose the corresponding QR decomposition is $\bmu=\bQ\bR$ for some $\bQ\in\mathbb{O}^{p}$ and upper triangular matrix $\bR\in\mathbb{R}^{p\times K}$. Then suppose the SVD of $\bR\Sigma_L^{1/2}$ is $\bR\Sigma_L^{1/2}=\bU_1\Sigma_1\bV_1^T$ for some $\bU_1\in\mathbb{O}^p$ and $\bV_1\in\mathbb{O}^K$. Therefore we obtain 
$\bmu\bl^T=\bQ\bU_1\Sigma_1\bV_1^T\bl_N^T$ and we know that $v_i$, which denotes the $i$th column of $\bl_N\bV_1$, satisfies $\bmu\bl^Tv_i=\sigma_i u_i$ for $i=1,\cdots,K$ where $u_i$ is the $i$th column of $\bQ\bU_1$ and $\sigma_i$ is the $i$th singular value of $\bmu\bl^T$.

Then we can use a variant of Davis-Kahan theorem (\citealp{yu2014}). Suppose $\bmu\bl^T=\bQ\bU_1\Sigma_1\bV_1^T\bl_N^T$ and $(\bmu^*)(\bl^*)^T=\bQ^*\bU_1^*\Sigma_1^*(\bV_1^*)^T(\bl_N^*)^T$. 
Denote
$D_F(\bO_1,\bO_2)=\inf_{\bV\in\mathbb{\bO}^r}\|\bO_1-\bO_2\bV\|_F$ for $\bO_1,\bO_2\in\mathbb{O}^r$
and let $\|\sin\Theta(\bO_1,\bO_2)\|_F$ be the (Frobenius) sine-theta distance between $\mathrm{span}(\bO_1)$ and $\mathrm{span}(\bO_2)$. Then the relationship between the metric $D_F$ and sine-theta distance holds:
$\|\sin\Theta(\bO_1,\bO_2)\|_F\leq D_F(\bO_1,\bO_2)\leq\sqrt{2}\|\sin\Theta(\bO_1,\bO_2)\|_F.$
Note that for the right singular subspace of $\bmu\bl^T$, we have
\begin{align*}
    \sqrt{2}\|\sin\Theta(\bl_N\bV_1,\bl_N^*\bV_1^*)\|_F&\geq \inf_{\bV\in\mathbb{O}^K}\|\bl_N\bV_1-\bl_N^*\bV_1^*\bV\|_F\\
    &=\inf_{\bV\in\mathbb{O}^K}\|\left(\bl_N-\bl_N^*\bV_1^*\bV(\bV_1)^{-1}\right)\bV_1\|_F\\
    &=\inf_{\bV\in\mathbb{O}^K}\|\bl_N-\bl_N^*\bV_1^*\bV(\bV_1^*)^{-1}\|_F
    \geq \|\sin\Theta(\bl_N,\bl_N^*)\|_F.
\end{align*}
Then by theorem 3 in \cite{yu2014}, we have
\begin{align*}
    \|\sin\Theta(\bl_N,\bl_N^*)\|_F&\leq \frac{2\sqrt{2}(2\sigma_{\max}+\|\bmu\bl^T-(\bmu^*)(\bl^*)^T\|_{2})}{\sigma_{\min}^2}
    \times \|\bmu\bl^T-(\bmu^*)(\bl^*)^T\|_F
\end{align*}
where $\sigma_{\max}$ and $\sigma_{\min}$ represent the max and min singular value of $(\bmu^*)(\bl^*)^T$ respectively.

We write $\bl_N=[(l_N)_1,\dots,(l_N)_n]^T$ and $\bl_N^*=[(l_N)_1^*,\dots,(l_N)_n^*]^T$. Note that $\bl_N^*$ and $\bl_N$ have at most $K$ distinct rows. Let $\zeta$ be the minimum distance among these $K$ distinct rows of $\bl_N^*$ with respect to $\ell$-2 norm: $\zeta=\min_{(l_N)^*_i\neq (l_N)^*_j}\|(l_N)^*_i-(l_N)^*_j\|_2$. Let $\bO=\arg\inf_{\bV\in\mathbb{O}^K}\|\bl_N-\bl_N^*\bV\|_F$. Define the set 
$\mathcal{I}=\left\{i:\|(l_N)_i-\bO^T(l_N)^*_i\|_2\geq\frac{\zeta}{2}\right\}.$
Assume that the event $\mathcal{E}_n = \{\|\sin\Theta(\bl_N,\bl_N^*)\|_F\leq\eta_n\}$ occurs \textit{a posteriori}, where $\eta_n=\frac{8\sqrt{2}\sigma_{\max}}{\sigma_{\min}^2}(M(s\log p+n\log K_{\max}))^{1/2}$
and $M$ is the constant in Theorem \ref{thm1}. By Theorem \ref{thm1}, we know that $\E_*\{\Pi(\mathcal{E}_n)\}\to 1$ as $n\to\infty$. 
This implies that $|\mathcal{I}|\leq {4\eta_n^2}/{\zeta^2}$ since otherwise we have
    $\|\bl_N-\bl_N^*\bO\|_F^2\geq ({\zeta^2}/{4})({4\eta_n^2}/{\zeta^2}) = \eta_n^2$,
which contradicts with the definition of $\mathcal{E}_n$. Thus, for any $i,j\in\mathcal{I}^c$ with $(l_N)_i=(l_N)_j$,
    $\|(l_N)^*_i-(l_N)^*_j\|_2
    \leq \|(l_N)_i-\bO^T(l_N)^*_i\|_2+\|(\bm{l}_N)_j-\bO^T(l_N)^*_j\|_2<\zeta,$
which implies $(l_N)^*_i=(l_N)^*_j$ since $\zeta$ is the minimum distance between pair of distinct rows of $\bl_N^*$.
On the other hand, note that $\zeta^2=1/n_{\max}^*+1/n_2$ where $n_2$ is the second largest cluster size. Consequently, since 
\begin{align*}
\sigma_{\min}((\bmu^*)(\bl^*)^T) &= \sqrt{\lambda_{K^*}((\bmu^*)(\bl^*)^T(\bl^*)(\bmu^*)^T)}
\geq\sqrt{n_{\min}^*\lambda_{K^*}((\bmu^*)(\bmu^*)^T)}
\geq n_{\min}^*\sigma_{\min}(\bmu^*),\\    
\sigma_{\max}((\bmu^*)(\bl^*)^T) &= \sqrt{\lambda_{\max}((\bmu^*)(\bl^*)^T(\bl^*)(\bmu^*)^T)}
\leq\sqrt{n_{\max}^*\lambda_{\max}((\bmu^*)(\bmu^*)^T)}
\leq n_{\max}^*\sigma_{\max}(\bmu^*),
\end{align*}
we have 
\[
|\mathcal{I}|\leq \frac{4\eta_n^2}{\zeta^2}\asymp\frac{(n_{\max}^*)^3\sigma_{\max}(\bmu^*)^2}{(n_{\min}^*)^4\sigma_{\min}(\bmu^*)^4}(s\log p+n\log K_{\max}).
\]
Note that $n_k^*>|\mathcal{I}|$ for all $k$,
namely, $\{(l_N)_i^*:i\notin\mathcal{I}\}$ consists of all $K^*$ distinct rows of $\bl_N^*$. 
Therefore, each of the unique $B_{\|\cdot\|_2}(\bO^T(l_N)^*_i,\zeta/2)$ as $i$ ranges over $[n]$, which is the $\ell$-2 ball centered at $\bO^T(l_N)^*_i$ with radius $\zeta/2$, contains at least one element of $\{(l_N)_i:i\in\mathcal{I}^c\}$.
Recall that $\zeta$ is the minimum distance between pair of distinct rows of $\bl_N^*$, so these open balls are disjoint for distinct rows of $\bl_N^*$. 
It follows from the pigeonhole principle that each ball contains exactly one element of $\{(l_N)_i:i\in\mathcal{I}^c\}$. 
As a result, if $(l_N)^*_i=(l_N)^*_j$ for $i,j\in\mathcal{I}^c$, then $(l_N)_i,(l_N)_j\in B_{\|\cdot\|_2}(\bO^T(l_N)^*_i,\zeta/2)$, implying that $(l_N)_i = (l_N)_j$ by the fact that every such ball contains exactly one row of $\bl$.

Therefore we prove that for any $i\in\mathcal{I}^c$, $(l_N)_i=(l_N)_j$ if and only if $(l_N)^*_i=(l_N)^*_j$. So the number of mis-clustered points are at most $|\mathcal{I}|$, which gives us the result because $\E_*\{\Pi(\mathcal{E}_n)\}\to 1$ as $n\to\infty$ by Theorem \ref{thm1}.
\end{proof}

\section{Posterior Inference via Gibbs Sampling}\label{sec:sampling}

In this section, we introduce a Gibbs sampler for posterior inference of the proposed Bayesian sparse Gaussian mixture model. 
We design a sampler based on the algorithm proposed in  \cite{miller2018} to avoid cross-dimensional sampling as in
RJ-MCMC. Let $\mathcal{C}$ denote the partition of $[n]$ according to the cluster memberships 
$\bz$. Formally,
$\mathcal{C}=\{E_k:|E_k|>0\}$ where $E_k=\{i:z_i=k \text{ for }i\in[n]\}$. 
Let $\mathcal{C}_{-i}$ be the partition of $[n]\setminus\{i\}$ according to the cluster memberships $\{z_j\}_{j\neq i}$. We also denote $n_c$ as the number of data points in $c \in \mathcal{C}$ and $n_c^-$ 
as the number of data points in $c \in \mathcal{C}_{-i}$. 
We can derive an urn representation for the mixture model from the exchangeable partition distribution: 
$\pi(\mathcal{C})=V_n(|\mathcal{C}|)\prod_{c\in\mathcal{C}}\alpha^{(|c|)}$, where 
$V_n(|\mathcal{C}|)=\sum_{k=1}^\infty p_K(k)k_{(|\mathcal{C}|)}/(\alpha k)^{(n)}
$
and $p_K$ is the prior of $K$. Here $x^{(m)}\overset{\Delta}{=}x(x+1)\cdots(x+m-1)$, $x_{(m)}\overset{\Delta}{=}x(x-1)\cdots(x-m+1)$.


To address the non-conjugacy of the Laplace distribution,
we re-write the SSL($\lambda_0$, $\lambda_1$, $\theta$) prior through the normal-scale-mixture representation of Laplace as follows: for $j\in[p]$,  
\begin{align*}
    (x_j\mid\phi_j,\xi_j=a)&\sim N\left(0,\frac{\phi_j}{\lambda_a^2}\right), a=0, 1, \quad
    \phi_j\sim\text{Exp}\left(\frac{1}{2}\right),\quad
    \xi_j\sim \text{Bernoulli}(\theta).
\end{align*}
We obtain the following closed-form full conditional posterior distributions of $\bmu_c$, $\bm{\phi}_c$ and $\xi$,
\begin{align*}
    ((\bmu_c)_j\mid-)&\sim N\left(\sum_{l\in c}(\by_l)_j\left(n_c+\frac{\lambda_{\xi_j}^2}{(\phi_c)_j}\right)^{-1},\left(n_c+\frac{\lambda_{\xi_j}^2}{(\phi_c)_j}\right)^{-1}\right),\\
    ((\phi_c)_j\mid-)&\sim\text{GiG}(0.5,(\bmu_c)_j^2\lambda_{\xi_j}^2,1),\\
    (\xi_j\mid-)&\sim\text{Bernoulli}(\theta'),\quad\text{where}\quad\theta'\propto \prod_{c\in \mathcal{C}}\frac{\lambda_1}{\sqrt{(\phi_c)_j}}\exp\left(-\frac{1}{2}\lambda_1^2\frac{(\bmu_c)_j^2}{(\phi_c)_j}\right)\theta,\\
    (\theta\mid-)&\sim\text{Beta}\left(1+\sum_{j=1}^p\xi_j,\beta_\theta+p-\sum_{j=1}^p\xi_j\right).
\end{align*}
Here, GiG$(\zeta,\chi,\tau)$ denotes the generalized inverse Gaussian distribution whose probability density function is 
$f(x)=x^{\zeta-1}\exp\left(-\left(\chi/x+\tau x\right)/2\right).$ 
We also remark that there exists a potential label switching phenomenon when sampling centers $\bmu_c$ and auxiliary variables $\bm{\phi}_c$ for all clusters. This can be prevented by the following alignment process. 
\begin{enumerate}[(i)]
    \item Collect 
    $B$ post-burn-in MCMC samples $\bmu^{(b)}=\left[\bmu_1^{(b)},\dots,\bmu^{(b)}_{K^{(b)}}\right]$ and 
    $\bl^{(b)}=\left[\bm{l}_1^{(b)},\dots,\bm{l}^{(b)}_{K^{(b)}}\right]^T$ for $b=1,\dots,B$, where  $K^{(b)}\overset{\Delta}{=}|\mathcal{C}^{(b)}|$ is the number of clusters in $b$-th iteration.
    \item Find the index $b^*$ that corresponds to the maximizer of the log-likelihood function:
    $b^*=\argmin_{b\in[B]} \left\|\by-(\bmu^{(b)})(\bl^{(b)})^T\right\|_\mathrm{F}^2.$
    \item 
    For $b=1,\dots,B$, find $\mathcal{P}^{(b)}=\argmin_{\mathcal{P}\in\mathcal{S}^{K^{(b)}\times K^{(b)}}}\left\|\bmu^{(b^*)}-\bmu^{(b)}\mathcal{P}\right\|_\mathrm{F}^2$, where $\mathcal{S}^{K^{(b)}\times K^{(b)}}$ is the set of all $K^{(b)}\times K^{(b)}$ permutation matrices.
  \item   For $b=1,\dots,B$, replace $\bmu^{(b)}$ by $\bmu^{(b)}\mathcal{P}^{(b)}$ and $\bl^{(b)}$ by $\bl^{(b)}\mathcal{P}^{(b)}$.
\end{enumerate}

We provide the detailed Gibbs sampler  in Algorithm \ref{alg:Gibbs_sampler} below. The R code can be found at \url{https://github.com/YanxunXu/HighDimClustering}.

\begin{algorithm}[H]
    \caption{The Gibbs sampler}
    \label{alg:Gibbs_sampler}
    \begin{algorithmic}[1]
        \Require{Initialization of $\mathcal{C}$,$\{\bmu_c:c\in\mathcal{C}\}$, $\bm{\xi}$, $\{\bm{\phi}_c:c\in\mathcal{C}\}$}
        \For{$b \gets 1$ to $B$}
            \For{$i \gets 1$ to $n$}
                \State{$t \gets |\mathcal{C}_{-i}|$}
                \If{$z_i\neq z_l$ for all $l\neq i$}
                    \State{$\bmu_{t+1}\gets\bmu_{z_j}$}
                    \ElsIf{$z_i=z_l$ for some $l\neq i$}
                        \State{Sample $\bm{\phi}_{t+1}\sim p_{\bm{\phi}}(\bm{\phi}_{t+1})$}
                        \State{Sample $\bmu_{t+1} \sim  p_{\bmu\mid\bm{\xi},\bm{\phi}}(\bmu_{t+1}\mid \bm{\xi},\bm{\phi}_{t+1})$}
                    \EndIf
                    \For{$k\gets 1$ to $t$}
                        \State{$m_k\gets (n_k^-+\alpha)p(\by_i\mid\bmu_c)$ where $n^-_k$ is the size of cluster $k$ in $\mathcal{C}_{-j}$}
                    \EndFor
                    \State{$V_n(t)\gets\frac{t!}{n!}\frac{\Gamma(\alpha t)}{n^{\alpha t-1}}p_K(t)$}
                    \State{$V_n(t+1)\gets\frac{(t+1)!}{n!}\frac{\Gamma(\alpha (t+1))}{n^{\alpha (t+1)-1}}p_K(t+1)$}
                    \State{$m_{t+1}\gets \alpha\frac{V_{n}(t+1)}{V_n(t)} p(\by_i\mid\bmu_{t+1})$}
                    \State{Sample $z_i\sim \text{Categorical}\left(\frac{m_1}{\sum_{k=1}^{t+1}m_k},\cdots,\frac{m_{t+1}}{\sum_{k=1}^{t+1}m_k}\right)$}
            \EndFor
            \For{$c\gets 1$ to $|\mathcal{C}|$}
                \For{$j\gets 1$ to $p$}
                    \State{Sample $(\bmu_c)_j\sim N\left(\sum_{l\in c}(\by_l)_j\left(n_c+\frac{\lambda_{\xi_j}^2}{(\bm{\phi}_c)_j}\right)^{-1},\left(n_c+\frac{\lambda_{\xi_j}^2}{(\bm{\phi}_c)_j}\right)^{-1}\right)$}
                \EndFor
            \EndFor

            \For{$c\gets 1$ to $|\mathcal{C}|$}
                \For{$j\gets 1$ to $p$}
                    \State{Sample $(\bm{\phi_c})_j\sim\text{GiG}(0.5,(\bmu_c)_j^2\lambda_{\xi_j}^2,1)$}
                \EndFor
            \EndFor
            \For{$j\gets 1$ to $p$}
                \State{$\theta'\gets \frac{\prod_{c\in \mathcal{C}}\lambda_1\exp\left(-\frac{1}{2}\lambda_1^2\frac{(\bmu_c)_j^2}{(\bm{\phi}_c)_j}\right)\theta}{\prod_{c\in \mathcal{C}}\lambda_1\exp\left(-\frac{1}{2}\lambda_1^2\frac{(\bmu_c)_j^2}{(\bm{\phi}_c)_j}\right)\theta+\prod_{c\in \mathcal{C}}\lambda_0\exp\left(-\frac{1}{2}\lambda_0^2\frac{(\bmu_c)_j^2}{(\bm{\phi}_c)_j}\right)(1-\theta)}$}
                \State{Sample $\xi_j\sim\text{Bernoulli}(\theta')$}
            \EndFor
            \State Sample $\theta\sim\text{Beta}\left(1+\sum_{j=1}^p\xi_j,\beta_\theta+p-\sum_{j=1}^p\xi_j\right)$
        \EndFor
    \end{algorithmic}
\end{algorithm}

\section{Additional Numerical Studies}\label{sec:add_result}
We conduct three additional simulation studies to evaluate the performance of the proposed method under various setups: (a) when the covariance matrix of the underlying data generating process is diagonal, with different diagonal entries; (b) when the true sampling distribution exhibits skewness; and (c) when the true coordinates do not exhibit sparsity. We use the same hyperparameters as in the simulation studies, and run the proposed method with 1000 burn-in samples and 4000 post-burn-in samples. Each additional simulation setup is replicated 100 times.

In (a), we modify Scenario I in the simulation study with $s=12$ and $K=3$ by setting $\bSigma = \text{diag}(\sigma_1^2, \dots, \sigma_p^2)$, where $\sigma_j^2$ are i.i.d. sampled from $\text{Gamma}(100, 100)$ for $j\in [p]$.
In all 100 replicated simulations, the proposed method successfully identifies the three clusters, yielding an average ARI of 0.97. In contrast, the four competitors tend to merge the two overlapping clusters, resulting in average ARIs of 0.54. These results highlight the robustness of the proposed Bayesian method, consistently outperforming alternative methods in terms of clustering accuracy.

In (b), we consider the sampling distribution to be a skewed t-distribution, with the same location and scale parameters as Scenario III in the manuscript. We set the skewness parameter to 10 and the degrees of freedom to 2. The average ARI across these 100 replicates is 0.84 for our proposed method, compared to 0.34 for PCA-KM. MClust and SKM fail to cluster in almost all simulation replicates, returning an average ARI of less than 0.01. 

In (c), we set $n=200$, $p=400$, $K=2$, $(\bmu_1^*)_S = r\mathbf{1}_s$, $(\bmu_2^*)_S = -r\mathbf{1}_s$, and assume $\bSigma_1^* = \bSigma_2^* = \bI_p$. We fix the ``signal-to-noise ratio" by setting $\|\bmu_1^* - \bmu_2^*\|_2^2 = 36$ and the sparsity support is defined as $S = [s]$, with $s$ varying over $\{4, 10, 100, 400\}$.  
Our proposed Bayesian model successfully recovers the two clusters when $s=4$ and $10$. However, when $s = 100$ and $400$, our model returns only one cluster, indicating poorer performance when the underlying truth is denser. Intuitively, this discrepancy arises because the spike-and-slab prior inherently introduces bias when the true model is not sparse. This bias affects the estimation of cluster centers, thereby influencing the update of cluster membership during subsequent MCMC iterations.

%

\begin{table}[htbp]
    \centering
    \begin{tabular}{c|c|c}
         & \multicolumn{2}{c}{Scenario I} \\
         & $K=3$, $s=6$ & $K=3$, $s=12$\\
        \hline
        Bayesian & 369 & 368 \\
        PCA-KM & 0.04 & 0.03\\
        MClust & 1.27 & 1.51\\
        SKM & 22.61 & 21.73\\
        CHIME & 11.28 & 11.37
    \end{tabular}
    \caption{Empirical running times (seconds) for all methods in simulation studies.}
    \label{tab:running_time}
\end{table}

\begin{figure}[htbp]
    \centering
    \includegraphics[width=.8\textwidth]{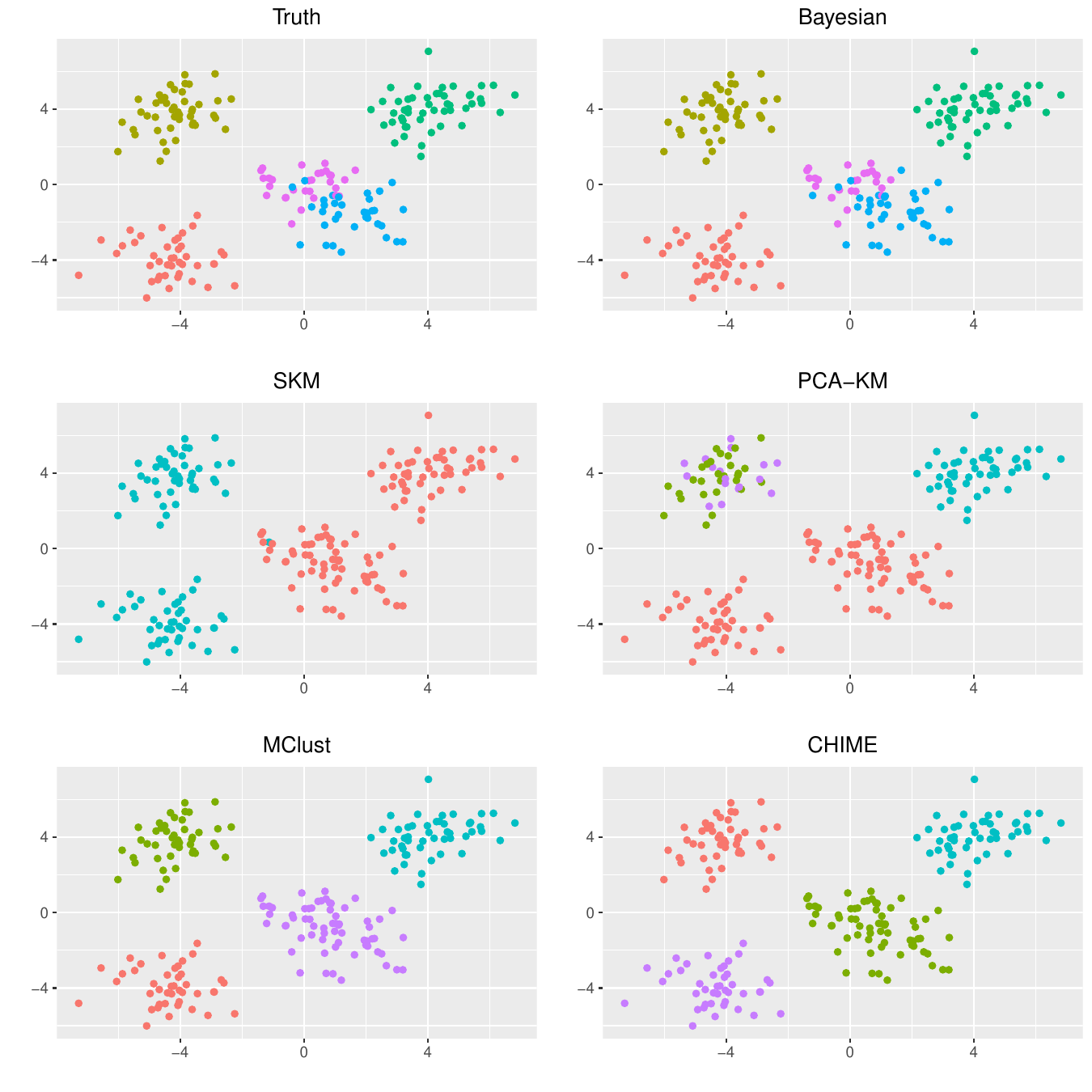}
    \caption{Clustering results of different methods compared with the true cluster memberships in the simulation Scenario I with $K^*=5$ and $s=6$ in a randomly selected simulation replicate. Data points are projected onto the subspace of the first two coordinates and different colors correspond to different estimated cluster memberships of the data points.}
    \label{fig:clustering_k5s6}
\end{figure}


\begin{figure}[tbp]
    \centering
    \includegraphics[width=.8\textwidth]{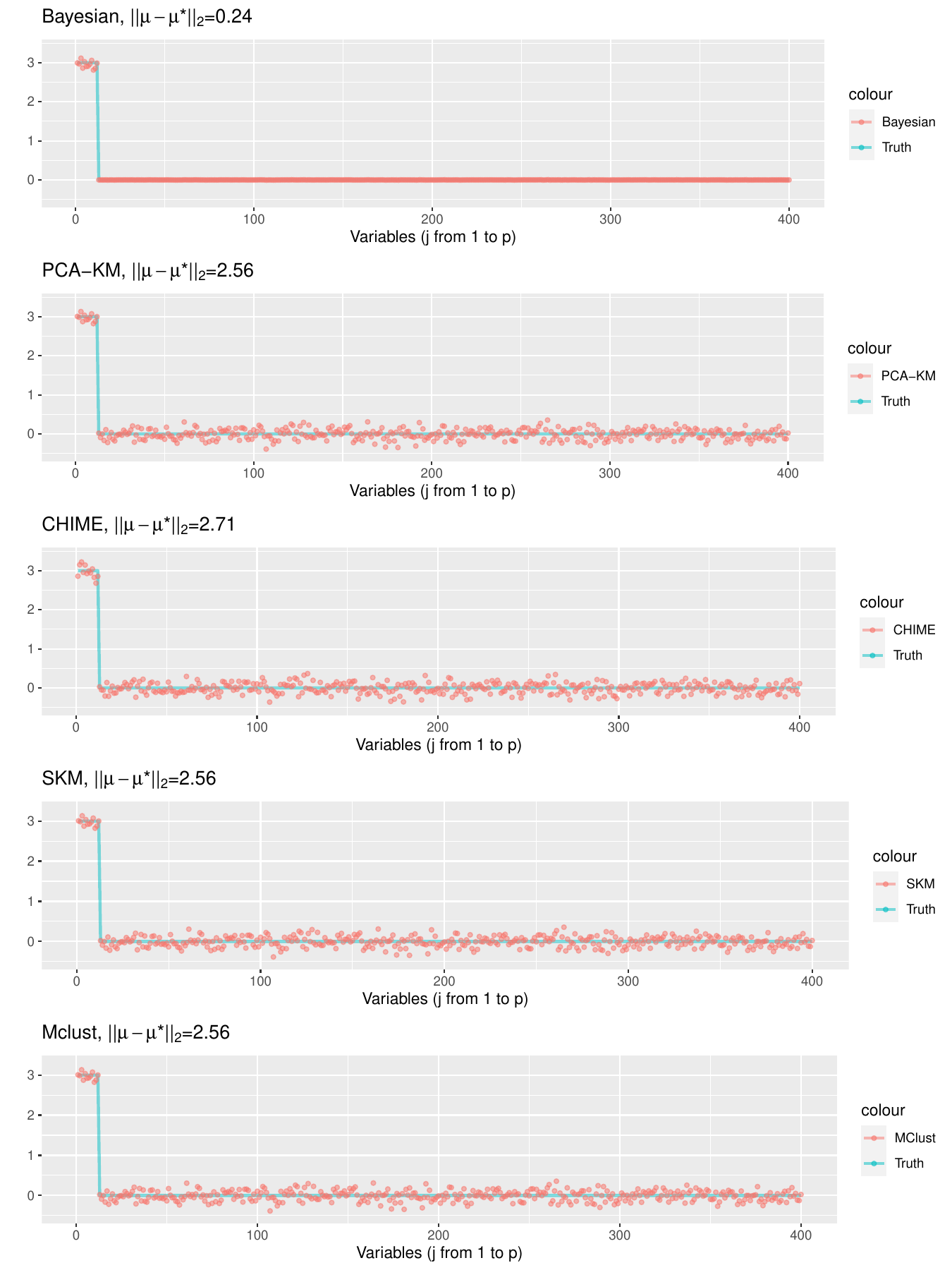}
    \caption{Estimation of mean vector $\bmu_1^*$ of different methods in Scenario I with $K^*=3$ and $s=12$ in a randomly selected simulation replicate.}
    \label{fig:sparsity_k3s12}
\end{figure}

\begin{figure}[tbp]
    \centering
    \includegraphics[width = .76\textwidth ]{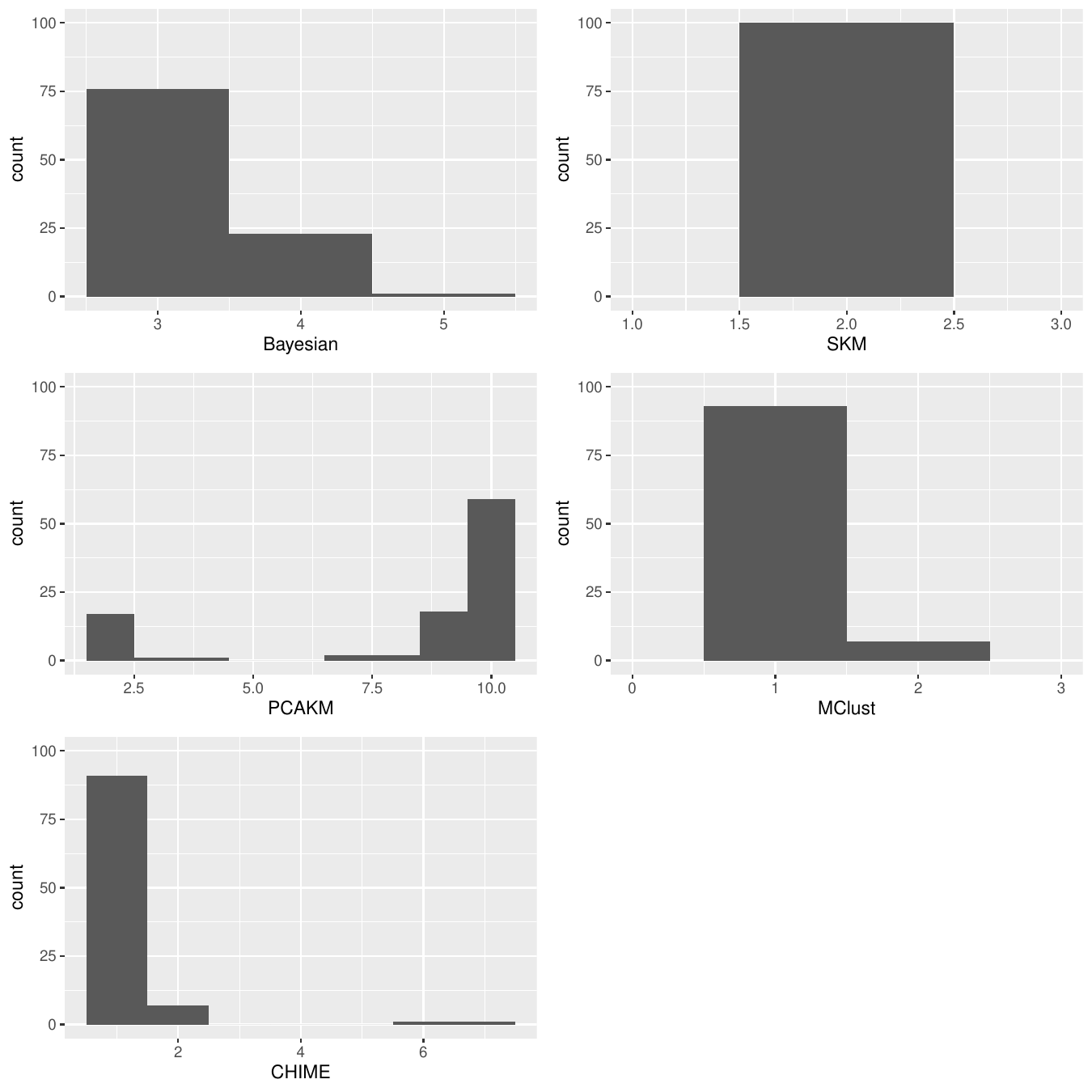}
    \caption{Histograms of estimated number of clusters under different methods in Scenario III of simulation studies.}
    \label{fig:hist_scen3}
\end{figure}


\begin{figure}[tbp]
    \centering
    \includegraphics[width = .8\textwidth ]{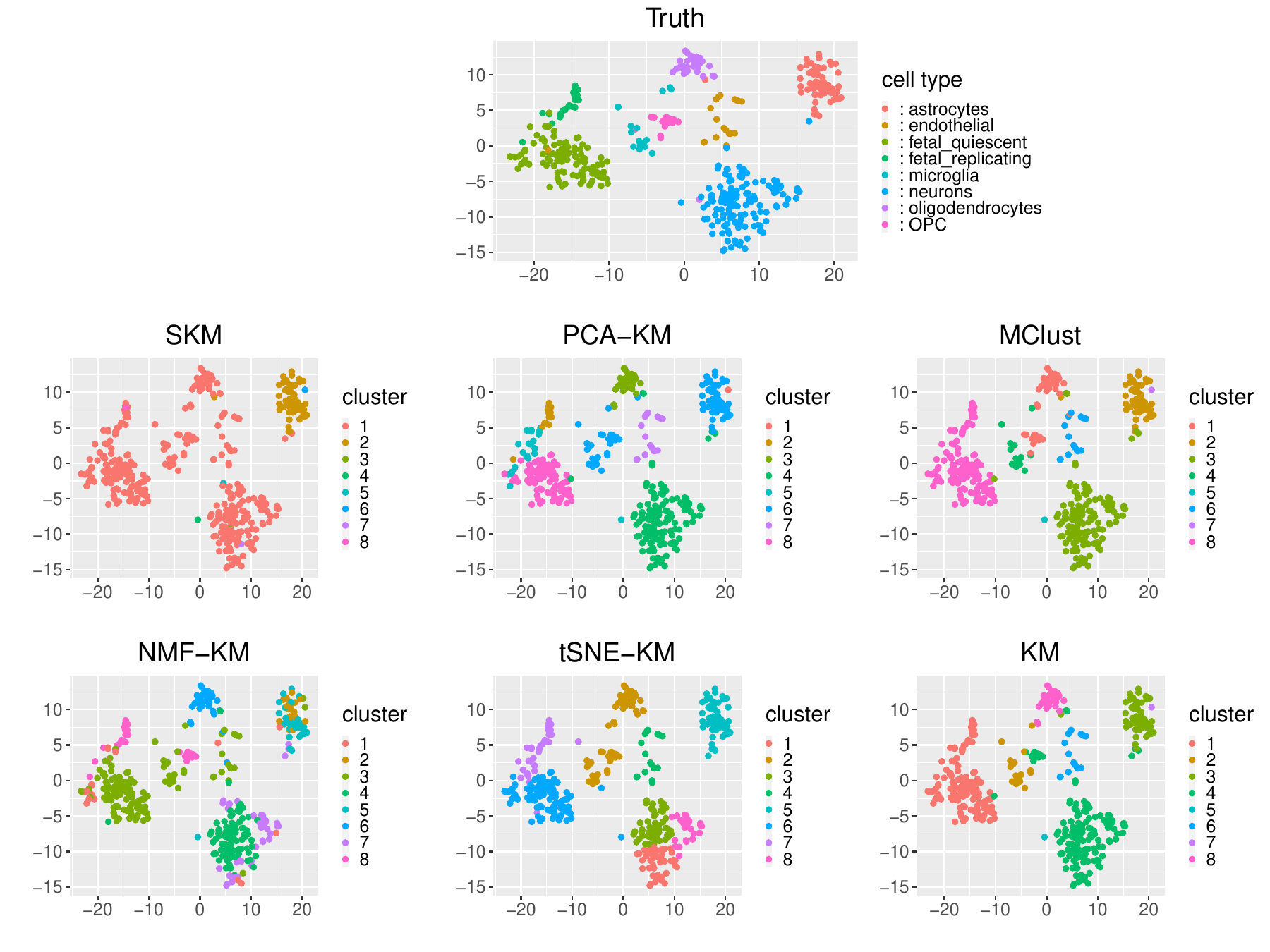}
    \caption{Clustering results of scRNA-Seq data corresponding to alternative methods. The number of clusters is set to be truth ($K=8$) for all methods.}
    \label{fig:real_data_true_k}
\end{figure}

\begin{table}[htbp]
    \centering
    \begin{tabular}{|c|c|c|}
        \hline Methods & ARI & NMI \\
        \hline KM & 0.79 & 0.77 \\
        \hline tSNE-KM & 0.63 & 0.73 \\
        \hline PCA-KM & 0.81 & 0.79 \\
        \hline NMF-KM &  0.77 & 0.78 \\
        \hline SKM & 0.15 &  0.23 \\
        \hline MClust & 0.83 & 0.79 \\
        \hline 
    \end{tabular}
    \caption{ARIs and NMIs of different methods on scRNA-Seq data. The number of clusters is set to be truth ($K=8$) for all methods.}
    \label{tab:real_data_true_k}
\end{table}



\end{appendices}

\clearpage

\bibliographystyle{apalike}
\bibliography{ref}

\end{document}